\newcommand{\dd}{\ensuremath{\, \mathrm{d}}}
\newtheoremstyle{definition}%
{}{}%
{\itshape}{}%
{\bfseries}{}% % Note that final punctuation is omitted.
{\newline}{}
\theoremstyle{definition}
\newtheorem{definition}{Definition}[section]
\newtheorem{satz}[definition]{Theorem}
\theoremstyle{plain}
\newtheorem{rem}[definition]{Remark}
\newtheorem{lem}[definition]{Lemma}
\newtheorem{claim}[definition]{Claim}
\newtheorem{prop}[definition]{Proposition}
\numberwithin{equation}{section}  %equation-counter is set back to zero at the beginning of each section
\begin{document}
 %%% Neu:
\begin{titlepage}
\title{A Blow-up Criterion for the Curve Diffusion Flow with a Contact Angle}
\author{  Helmut Abels\footnote{Fakult\"at f\"ur Mathematik,  
Universit\"at Regensburg,
93040 Regensburg,
Germany, e-mail: {\sf helmut.abels@mathematik.uni-regensburg.de}}~ and Julia Butz\footnote{Fakult\"at f\"ur Mathematik,  
Universit\"at Regensburg,
93040 Regensburg,
Germany, e-mail: {\sf julia4.butz@mathematik.uni-regensburg.de}}}
%\date{}
\end{titlepage}
\maketitle
\begin{abstract}
We prove a blow-up criterion in terms of an $L_2$-bound of the curvature for solutions to the curve diffusion flow if the maximal time of existence is finite. In our setting, we consider an evolving family of curves driven by curve diffusion flow, which has free boundary points supported on a line. The evolving curve has fixed contact angle $\alpha \in (0, \pi)$ with that line and satisfies a no-flux condition. The proof is led by contradiction: A compactness argument combined with the short time existence result enables us to extend the flow, which contradicts the maximality of the solution. 
\end{abstract}
\noindent{\bf Key words:} curve diffusion, surface diffusion, contact angles, weighted Sobolev spaces, blow-up criteria

\noindent{\bf AMS-Classification:} 53C44, 35K35, 35K55
%%%%%%%%%%%%%%%%%%%%%%%%%%%%%%%%%%%%%%%%%%%%%%%%%%%%%%%%%%%%%%%%%%%%%%%%%%%%%%%%%%%%%%%%%%%%%%%%
%%%%%%%%%%%%%%%%%%%%%%%%%%%%%%%%%%%%%%%%%%%%%%%%%%%%%%%%%%%%%%%%%%%%%%%%%%%%%%%%%%%%%%%%%%%%%%%%
%%%%%%%%%%%%%%%%%%%%%%%%%%%%%%%%%%%%%%%%%%%%%%%%%%%%%%%%%%%%%%%%%%%%%%%%%%%%%%%%%%%%%%%%%%%%%%%%
\section{Introduction and Main Result} \label{intro}

The curve diffusion flow is the one-dimensional version of the surface diffusion flow, which describes the motion of interfaces in the case that it is governed purely by diffusion within the interface. It was originally derived by Mullins to model the development of surface grooves at the grain boundaries of a heated polycrystal in 1957, see \cite{mullins}. Moreover, the flow is related to the Cahn-Hilliard equation for a degenerate mobility, see e.g.\ %. This equation arises in material science and models the phase separation of a binary alloy, which separates and forms domains mainly filled by a single component. Formal asymptotic expansions suggest that surface diffusion flow is the singular limit of the Cahn-Hilliard equation with a degenerate mobility for the case that the interfacial layer does not intersect the boundary of the domain, see
\cite{cahnelliottnovick} %. Garcke and Novick-Cohen considered also the situation of an intersection of the interfacial layer with the external boundary and identified formally the sharp interface model, where the interfaces evolve in the two-dimensional case according to \eqref{0} and subject to an attachment condition, a $\tfrac{\pi}{2}$-angle condition, and a no flux-condition, see
or \cite{garckenovick}. \\

Our goal is to establish a blow-up criterion for the curve diffusion flow with a contact angle in terms of the curvature. More precisely, we consider the time dependent family of regular open curves $\{\Gamma_t\}_{t \geq 0}$ moving according to 
\begin{align}
V = - \partial_{ss} \kappa_{\Gamma_t}  \hspace{3cm} \textrm{ on } \Gamma_t, t > 0, \label{0}
\end{align}
where $V$ is the scalar normal velocity, $\kappa_{\Gamma_t}$ is the scalar curvature of $\Gamma_t$, and $s$ denotes the arc length parameter. We complement the evolution law with the boundary conditions 
\begin{align}
\partial \Gamma_t &\subset \mathbb{R} \times \{0\} && \textrm{ for } t > 0, \label{2blow} \\
\measuredangle \left({n}_{\Gamma_t},  \begin{pmatrix} 0 \\ -1\end{pmatrix} \right) &= \pi - \alpha && \textrm{ at } \partial \Gamma_t \textrm{ for } t > 0,  \label{4blow} \\
\partial_s \kappa_{\Gamma_t} &= 0  && \textrm{ at } \partial \Gamma_t \textrm{ for } t > 0, 
 \label{3blow}
\end{align}
where ${n}_{\Gamma_t}$ is the unit normal vector of $\Gamma_t$ and $\alpha \in (0, \pi)$. We give a sketch of the geometrical situation in Figure \ref{1}.
\begin{center}\vspace{0 cm}
	%TODO to include graphics like this, create them in a separate document and include them as PDFs
	\scalebox{1}{%% Creator: Inkscape 0.48.3.1, www.inkscape.org
%% PDF/EPS/PS + LaTeX output extension by Johan Engelen, 2010
%% Accompanies image file '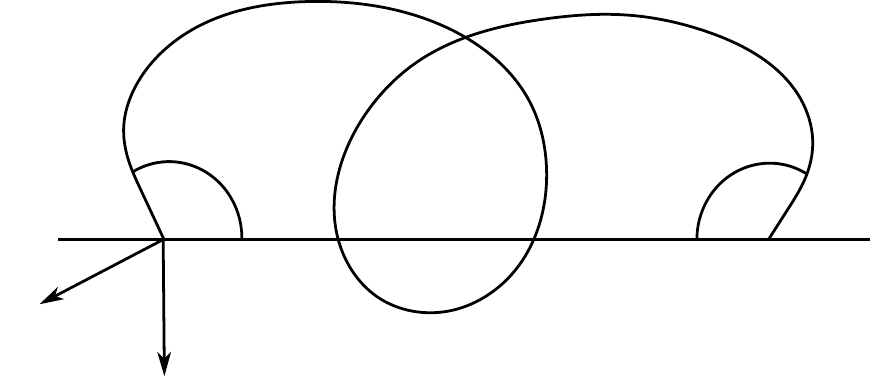' (pdf, eps, ps)
%%
%% To include the image in your LaTeX document, write
%%   \input{<filename>.pdf_tex}
%%  instead of
%%   \includegraphics{<filename>.pdf}
%% To scale the image, write
%%   \def\svgwidth{<desired width>}
%%   \input{<filename>.pdf_tex}
%%  instead of
%%   \includegraphics[width=<desired width>]{<filename>.pdf}
%%
%% Images with a different path to the parent latex file can
%% be accessed with the `import' package (which may need to be
%% installed) using
%%   \usepackage{import}
%% in the preamble, and then including the image with
%%   \import{<path to file>}{<filename>.pdf_tex}
%% Alternatively, one can specify
%%   \graphicspath{{<path to file>/}}
%% 
%% For more information, please see info/svg-inkscape on CTAN:
%%   http://tug.ctan.org/tex-archive/info/svg-inkscape
%%
\begingroup%
  \makeatletter%
  \providecommand\color[2][]{%
    \errmessage{(Inkscape) Color is used for the text in Inkscape, but the package 'color.sty' is not loaded}%
    \renewcommand\color[2][]{}%
  }%
  \providecommand\transparent[1]{%
    \errmessage{(Inkscape) Transparency is used (non-zero) for the text in Inkscape, but the package 'transparent.sty' is not loaded}%
    \renewcommand\transparent[1]{}%
  }%
  \providecommand\rotatebox[2]{#2}%
  \ifx\svgwidth\undefined%
    \setlength{\unitlength}{250.5851675bp}%
    \ifx\svgscale\undefined%
      \relax%
    \else%
      \setlength{\unitlength}{\unitlength * \real{\svgscale}}%
    \fi%
  \else%
    \setlength{\unitlength}{\svgwidth}%
  \fi%
  \global\let\svgwidth\undefined%
  \global\let\svgscale\undefined%
  \makeatother%
  \begin{picture}(1,0.43231757)%
    \put(0,0){\includegraphics[width=\unitlength]{Zeichnung1_neumehr.pdf}}%
    \put(0.85991725,0.18789509){\color[rgb]{0,0,0}\makebox(0,0)[b]{\smash{$\alpha$}}}%
    \put(0.48844155,0.29124704){\color[rgb]{0,0,0}\makebox(0,0)[lb]{\smash{$\Gamma_t$}}}%
    \put(0.20381141,0.1870888){\color[rgb]{0,0,0}\makebox(0,0)[lb]{\smash{$\alpha$}}}%
    \put(0.21255937,0.0648694){\color[rgb]{0,0,0}\makebox(0,0)[lb]{\smash{$ \begin{pmatrix} 0 \\ -1\end{pmatrix}$}}}%
    \put(-0.00043648,0.11280602){\color[rgb]{0,0,0}\makebox(0,0)[lb]{\smash{$\vec{n}_{\Gamma_t}$}}}%
  \end{picture}%
\endgroup%
}
	\captionof{figure}{Evolution by curve diffusion flow with $\alpha$-angle condition for $\alpha > \tfrac{\pi}{2}$.}
	\label{1}
\end{center}\vspace{0 cm}
Escher, Mayer, and Simonett gave numerical evidence that closed curves in the plane, which are moving according to \eqref{0} can develop singularities in finite time, cf.\ \cite{eschmaysim}. Indeed, for smooth closed curves driven by \eqref{0}, Chou provided a sharp criterion for a finite lifespan of the flow in \cite{chou}. Additionally, Chou, see \cite{chou}, and Dzuik, Kuwert, and Schätzle, see \cite{dziuk}, showed that if a solution has a maximal lifespan $T_{max} < \infty$, then the $L_2$-norm of the curvature with respect to the arc length parameter tends to infinity as $T_{max}$ is approached. Moreover, they gave a rate for the blow-up.\\ 

Before we state the main result, we give our notion of solution. We refer to Section \ref{pre} below for the definitions of the function spaces in the following.
\begin{definition} [Strong Solution, Maximal Solution] \label{strong}
	Let $f_0: \bar{I} \to \mathbb{R}^2$, $I := (0, 1)$, be a regular curve in $W_2^{4\left(\mu - \nicefrac{1}{2}\right)}(I; \mathbb{R}^2)$, $\mu \in \left(\frac{7}{8}, 1 \right]$. Furthermore, let it fulfill
	\begin{align}
	f_{0}(\sigma) &\in \mathbb{R} \times \{0\} && \text{ for } \sigma \in \{0, 1\}, \nonumber \\
	\measuredangle \left(n_{\Gamma_0}(\sigma),  \begin{pmatrix} 0 \\ -1\end{pmatrix} \right) &= \pi - \alpha && \text{ for } \sigma \in \{0, 1\},
	\label{initialdatum}
	\end{align}
	where $\Gamma_0 := f_0(\bar{I})$ and $\alpha \in (0, \pi)$ and $\measuredangle(u, v)$ denotes the angle between two vectors $u, v \in \mathbb{R}^2$. We call $f:[0, T) \times \bar{I} \rightarrow \mathbb{R}^2$, a \textbf{strong solution} of the curve diffusion flow, if the following holds true:
	\begin{enumerate}
		\item $f \in W^1_{2, \mu, loc} \left([0, T); L_{2} (I; \mathbb{R}^2) \right) \cap L_{2, \mu, loc} \left([0, T); W^{4}_{2} (I; \mathbb{R}^2) \right)$,
		where 
		\begin{align*}
		W^k_{2, \mu, loc} ([0, T); E) := \big\{&u: [0, T) \rightarrow E \textrm{ is strongly measurable }: u_{|(0,T')} \in W^k_{2, \mu} ((0, T'); E) \\ &\textrm{ for all } 0 < T' < T) \big\},
		\end{align*}
		\item $f$ fulfills the equations \eqref{0}-\eqref{3blow} and there exists a regular $C^1$-reparametrization $\varphi: [0, 1] \rightarrow [0, 1]$ such that $f_0(\varphi(\sigma)) = f(0, \sigma)$ for all $\sigma \in [0, 1]$,
		\item $f(t, \cdot)$ is for each $t \in [0, T)$ a regular parametrization of the curve $f(t, \bar{I})$.
	\end{enumerate}
	If $T$ is the largest time for a given $f_0$ such that there is a strong solution on $[0, T)$, we set $T_{max} = T$ and call it a \textbf{maximal solution} of curve diffusion flow. 
      \end{definition}
      Our first main result is:
\begin{satz} [Blow-up Criterion] \label{c}
	Let $f: [0, T_{max}) \times \bar{I} \rightarrow \mathbb{R}^2$, $I:=(0,1)$, $T_{max} < \infty$, be 
	%!!!the geometrically unique 
	a maximal solution of \eqref{0}-\eqref{3blow}.
	Then $\lim_{t \to T_{max}} \|\kappa [f(t)]\|_{L_2(0, \mathcal{L}[f(t)])} = \infty$. 
\end{satz}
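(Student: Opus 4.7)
The plan is to argue by contradiction following the strategy announced in the abstract. Assume that $M := \limsup_{t \to T_{\max}} \|\kappa[f(t)]\|_{L_2(0, \mathcal{L}[f(t)])} < \infty$. The goal is to show that under this assumption the solution can be continued beyond $T_{\max}$ via the short-time existence theorem, contradicting maximality of $f$.

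First, I would upgrade the $L_2$-bound on $\kappa$ to control of the geometry. Using the standard integrated identity $\frac{d}{dt}\mathcal{L}[f(t)] = -\int_{\Gamma_t} (\partial_s \kappa)^2 \, ds$ together with $\kappa(\partial \Gamma_t) \cdot \partial_s \kappa = 0$ (coming from \eqref{3blow}), one controls the length from above. A lower bound on the length comes from an isoperimetric-type argument or from the enclosed area being conserved (up to boundary contributions compatible with the fixed contact angle). With the length in a compact subinterval of $(0,\infty)$, the Gagliardo--Nirenberg inequality on a curve, combined with the no-flux boundary condition \eqref{3blow} and the contact angle condition \eqref{4blow}, would be used to bootstrap the $L_2$-bound on $\kappa$ into bounds for $\|\partial_s^k \kappa\|_{L_2}$ for arbitrary $k$, uniformly in $t \in [0, T_{\max})$. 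This is essentially the curve-diffusion analogue of the Dziuk--Kuwert--Sch\"atzle estimates in \cite{dziuk}, but carried out in the boundary setting, which is the main technical obstacle: the integration-by-parts arguments now produce boundary terms at $\partial \Gamma_t$ that must be controlled via the contact angle and no-flux conditions.

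Second, having uniform higher-order bounds, I would fix an arc-length-like reparametrization (e.g.\ constant-speed) $\tilde f(t,\cdot): \bar I \to \mathbb{R}^2$ of $f(t,\cdot)$. The bounds transfer to uniform estimates on $\tilde f$ in $W_2^k(I;\mathbb{R}^2)$ for all $k$, together with uniform lower bounds on $|\partial_\sigma \tilde f|$ ensuring regularity of the parametrization. Using the evolution equation \eqref{0} to bound $\partial_t \tilde f$ in $L_2$, one obtains that $t \mapsto \tilde f(t,\cdot)$ is uniformly H\"older continuous into a low-regularity space and uniformly bounded into a high-regularity space. By Aubin--Lions, one extracts a limit $\tilde f_* := \lim_{t \to T_{\max}} \tilde f(t, \cdot)$ which is a regular curve in $W_2^{4(\mu - 1/2)}(I;\mathbb{R}^2)$ satisfying the boundary conditions \eqref{initialdatum}; continuity of the trace together with \eqref{2blow}, \eqref{4blow} passes these constraints to the limit.

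Finally, I would invoke the short-time existence result (which holds for any admissible initial curve of regularity $W_2^{4(\mu - 1/2)}$) with initial datum $\tilde f_*$, obtaining a strong solution on $[T_{\max}, T_{\max}+\delta)$ for some $\delta > 0$. The reparametrization clause in Definition~\ref{strong}(ii) allows us to paste this extension onto $f$ to produce a strong solution on $[0, T_{\max}+\delta)$, contradicting the maximality of $T_{\max}$. The principal difficulty I anticipate is the bootstrap step: deriving, purely from a bound on $\|\kappa\|_{L_2}$, uniform bounds on all higher derivatives in the presence of the contact-angle boundary, since each integration by parts in an energy estimate for $\int (\partial_s^k \kappa)^2\, ds$ must be handled so that the boundary contributions either vanish or are absorbed via the $\alpha$-angle and no-flux conditions.
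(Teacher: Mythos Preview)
Your overall contradiction strategy is correct, but the core technical step you propose --- bootstrapping the $L_2$-bound on $\kappa$ to uniform bounds on all $\|\partial_s^k\kappa\|_{L_2}$ via Dziuk--Kuwert--Sch\"atzle-type energy estimates --- is precisely what the paper deliberately avoids, and for the reason you yourself identify as the ``principal difficulty.'' For $\alpha\neq\pi/2$ the flow has a nontrivial tangential component (the boundary points move along the $x$-axis), and the integration-by-parts arguments for $\tfrac{d}{dt}\int(\partial_s^k\kappa)^2\,ds$ generate boundary terms that are \emph{not} killed by the no-flux condition \eqref{3blow} and the angle condition \eqref{4blow} once $k\geq 2$. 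You have not indicated any mechanism to absorb these terms, and there is no known way to close the estimates in this boundary setting; this is a genuine gap, not merely a technicality.

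The paper sidesteps the bootstrap entirely. It observes that an $L_2$-bound on $\kappa$ along a sequence $t_l\to T_{\max}$, together with the length bound from Lemma~\ref{lengthbounded}, already gives a uniform $W_2^2(I;\mathbb{R}^2)$-bound on the arc-length-reparametrized, translated curves $\tilde f_l$. No higher derivatives are needed: the compact embedding $W_2^2\hookrightarrow W_2^\gamma$ for $\gamma\in(\tfrac32,2)$ lands exactly in the initial-data class $W_2^{4(\mu-1/2)}$ of Theorem~\ref{localbetter} with $\mu<1$. The real work of the paper (all of Section~\ref{const}) is then a compactness argument on the set $\{\tilde f_l\}$: cover it by finitely many balls, produce a reference curve for each ball via Theorem~\ref{refcurve}, and show that the short-time existence Theorem~\ref{local} yields a \emph{uniform} lower bound $\tilde T$ on the existence time over all these initial data. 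Restarting at some $t_k$ with $t_k+\tilde T>T_{\max}$ gives the contradiction. Your Aubin--Lions limit step is replaced by this finite-cover argument, which requires only the $W_2^2$ bound.

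A minor logical point: the negation of $\lim_{t\to T_{\max}}\|\kappa\|_{L_2}=\infty$ is that some sequence $t_l\to T_{\max}$ has $\|\kappa[f(t_l)]\|_{L_2}$ bounded (equivalently $\liminf<\infty$), not that $\limsup<\infty$. Your contradiction hypothesis as stated would only yield $\limsup=\infty$.
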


The strategy of our proof is inspired by the blow-up criterion in \cite{dziuk}: By assumption, $f: [0, T_{max}) \times \mathbb{S}^1 \rightarrow \mathbb{R}^n$, $T_{max} < \infty$, is a smooth solution of \eqref{0} for closed curves, which cannot be extended in time. The authors assume, contrary to their claim, that $\|\kappa(t)\|_{L_2}$ is uniformly bounded in $t < T_{max}$. Here $\|\cdot\|_{L^2}$ denotes the $L_2$-norm with respect to the arc length parameter. Carrying on, they consider the normal component of the derivative, i.e.\ $\nabla_s \vec{\kappa} := \partial_s \vec{\kappa} - \langle \partial_s \vec{\kappa}, \tau \rangle,$
where $\langle \cdot, \cdot \rangle$ denotes the Euclidean inner product on $\mathbb{R}^n$ and $\tau$ is the unit tangent vector. Using the motion law, they obtain differential inequalities for $\|\nabla^m_s \vec{\kappa}(t)\|_{L_2}$ for all $t < T_{max}$ and for all $m \in \mathbb{N}$. By the curvature bound, they  iteratively establish bounds on $\|\nabla^m_s \vec{\kappa}(t)\|_{L_2}$ by Gagliardo-Nirenberg-type inequalities. Comparing the full arc length derivatives $\partial^m_s \vec{\kappa}(t)$ to the projected ones $\nabla^m_s \vec{\kappa}(t)$, they can prove bounds on the $L_2$-norms of the full spatial derivatives of the curvature vector for all $t < T_{max}$. This permits for an extension of the flow beyond $T_{max}$, which contradicts the maximality of the solution. 

The authors use the same approach also for showing global existence of solutions: Considering an $L_2$-gradient flow of an energy which provides a bound on $\|\kappa(t)\|_{L_2}$ the strategy allows for proving global existence of solutions and subconvergence results as $t \rightarrow \infty$, i.e.\ convergence of a subsequence. In \cite{dziuk}, Dziuk, Kuwert, and Schätzle also inspect the bending energy with length penalization for closed curves, which is for a smooth, regular $f : \mathbb{S}^1 \rightarrow \mathbb{R}^n$, $n\geq2$ given by
\begin{align}
\mathcal{B}[f] := \int_{\mathbb{S}^1} \left( \frac{1}{2} |\vec{\kappa}|^2 + \lambda \right) ds && \textrm{ for } \lambda \in \mathbb{R}. \label{B} 
\end{align}
They obtain by the previously described technique that for smooth, regular initial data the $L_2$-gradient flow of \eqref{B} with $\lambda \in [0,\infty)$ has a smooth global solution. In the case $\lambda > 0$, they deduce that it subconverges to an equilibrium after reparametrization to arc length and a suitable translation. Moreover, they give an analogous result for the $L_2$-gradient flow of \eqref{B} for $\lambda = 0$ with a length constraint.

The same strategy was also adapted to the case of open curves: In \cite{lin}, \cite{acquapozzi}, and \cite{acqualinpozzi}, the authors consider $L_2$-gradient flows of the bending energy, either with length penalization or with length constraint, for open curves. For a smooth, regular function $f : I \rightarrow \mathbb{R}^n$, $n\geq2$, $I$ a closed bounded interval, they look for different parameters $\xi$ and $\lambda$ at the energy
\begin{align}
\mathcal{E}[f] := \int_{I} \left( \frac{1}{2} |\vec{\kappa} - \xi|^2 + \lambda \right) ds && \textrm{ for } \xi \in \mathbb{R}^n \textrm{ and } \lambda \in \mathbb{R}. \label{E} 
\end{align}
Here, the vector $\xi$ is called spontaneous curvature, see \cite{acquapozzi}.

Lin proved a global existence result for the $L^2$-flow of \eqref{E} for fixed $\lambda \in \mathbb{R}^+$ and $\xi=0$ for open curves with clamped boundary conditions in \cite{lin}. The gradient flow is considered among curves with fixed boundary points and fixed tangent vectors at the boundary points. Additionally, the initial datum is supposed to be smooth with positive, finite length and satisfying certain compatibility conditions. Again, it is assumed that $f$
is a smooth solution which cannot be extended in time. The author controls $\nabla^m_t f (t)$ for all $t < T_{max}$, instead of $\nabla_s^m \vec{\kappa}(t)$ as in \cite{dziuk}, where $\nabla_t f := \partial_t f - \langle \partial_t f, \tau \rangle.$  
He obtains bounds on $\|\nabla^m_t f (t) \|_{L_2}$ for all $m \in \mathbb{N}$ in terms of $\nabla^p_s \vec{\kappa}(t)$, $p \in \mathbb{N}$, for all $t < T_{max}$. In contrast to the setting in \cite{dziuk}, attention has to be paid to the boundary terms, which occur due to integration by parts. Thus, the quantities $\nabla^m_t f (t)$ are a clever choice, as they vanish at the boundary points due to the boundary conditions. Additionally, from the global existence of the flow it is deduced that the family of curves subconverges after reparametrization by arc length to an equilibrium. Dall'Acqua, Pozzi, and Spener strengthened the result of Lin in \cite{lin} by showing that up to a time dependent reparametrization $\phi(t, \cdot): I \rightarrow I$, $t \in [0, \infty)$, the whole solution $f(t, \phi(t, \cdot))$ converges to a critical point of the energy in $L_2$ for $t \rightarrow \infty$, see \cite{acquapozzispener}.

In \cite{acquapozzi}, Dall'Acqua and Pozzi proved a global existence and subconvergence result for the $L^2$-flow of the energy \eqref{E} for $\lambda \in [0,\infty)$ and $\xi \in \mathbb{R}^n$, with fixed boundary points and such that the curvature vector equals the normal component of $\xi$ at the boundary. In this setting, the authors also control the quantities $\nabla^m_t f (t)$, but those do not vanish at the boundary and thus have to be analyzed carefully.

Moreover, Dall'Acqua, Lin, and Pozzi obtained an analogous result for the $L^2$-gradient-flow of \eqref{E} with $\xi = 0$, which is complemented with hinged boundary conditions, i.e.\ fixed boundary points and zero curvature at the boundary points, see \cite{acqualinpozzi}. Additionally, a special time dependent $\lambda$ is chosen to preserve the length of the curve during the flow. \\

The strategy of the proof of Theorem \ref{c} is similar: For a given maximal solution $f$, we assume, contrary to the claim, that $\|\kappa(t_l)\|_{L_2}$ is uniformly bounded for a sequence $(t_l)_{l \in \mathbb{N}}$ with $t_l \rightarrow T_{max}$ for $l \rightarrow \infty$. In contrast to the reasoning in \cite{dziuk} we do not work with smooth solutions: For $\alpha \in (0, \tfrac{\pi}{2}) \cup (\tfrac{\pi}{2}, \pi)$ the considered flow also has a tangential component, as the boundary points are attached to the $x$-axis during the flow. This makes the control of the boundary terms complicated. By just using the bound on the curvature, we obtain a uniform $W^2_2$-bound for $\tilde{f}_l$, which denotes the reparametrized and translated solution at time $t_l$. This motivates us to consider solutions in the space $W^1_{2} ((0, T_{max}); L_{2} (I; \mathbb{R}^2)) \cap L_{2} ((0, T_{max}); W^{4}_{2} (I; \mathbb{R}^2))$, as its temporal trace space is $W^{2}_{2} (I; \mathbb{R}^2))$. The idea is to restart the flow at these times $t_l$ in order to extend the flow beyond $T_{max}$, since this contradicts the maximality of the solution. In order to achieve this, we have to find a uniform lower bound on the existence time of the solutions by a compactness argument. To this end, we work with initial data of class $W^{\beta}_{2} (I; \mathbb{R}^2))$, $\beta < 2$. Finally, this enables us to extend the original solution and provides a contradiction to the maximality of the solution.

More precisely, we exploit a short time existence result Theorem \ref{local} in \cite{butzprint1}, which guarantees that the curve diffusion flow starts for initial curves given by a certain sufficiently small height function of class $W^{4(\mu - \nicefrac{1}{2})}_2(I)$, $\mu \in (\tfrac{7}{8}, 1],$ of a reference curve in $C^5(\bar{I}; \mathbb{R}^2)$. For the blow-up criterion, it will be crucial to make sure that it starts for every admissible $W_2^{4\left(\mu - \nicefrac{1}{2}\right)}(I; \mathbb{R}^2)$-curve $f_0$. 
The second main result reads:
\begin{satz} [Local Well-Posedness for a Fixed Initial Curve] \label{localbetter}
	Let $f_0: \bar{I} \to \mathbb{R}^2$, $I := (0, 1)$, be a regular curve in $W_2^{4\left(\mu - \nicefrac{1}{2}\right)}(I; \mathbb{R}^2)$, $\mu \in \left(\frac{7}{8}, 1 \right]$. Furthermore, let it fulfill the boundary conditions \eqref{initialdatum}
	Then, there exists a $T > 0$, such that $f$
	is a strong solution to curve diffusion flow.
\end{satz}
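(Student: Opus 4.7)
The plan is to reduce the statement to the parametric short time existence result Theorem \ref{local} of \cite{butzprint1}, which starts the flow only when the initial curve is a \emph{sufficiently small} $W_2^{4(\mu-\nicefrac{1}{2})}$-height function over a fixed $C^5$-reference curve. The point is that the smallness requirement is not a genuine restriction: one is free to choose the reference curve depending on $f_0$. Accordingly, the strategy consists in constructing a smooth reference curve $f_\ast \in C^5(\bar I; \mathbb R^2)$ that already satisfies the contact conditions and approximates $f_0$ in $W_2^{4(\mu-\nicefrac{1}{2})}(I;\mathbb R^2)$ so closely that the induced height function is below the smallness threshold of Theorem \ref{local}.

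First I would produce the reference curve. Mollification of $f_0$ in a standard way yields a sequence $f_\ast^\varepsilon \in C^\infty(\bar I;\mathbb R^2)$ converging to $f_0$ in $W_2^{4(\mu-\nicefrac{1}{2})}$; since $\mu > \tfrac{7}{8}$ implies $4(\mu-\nicefrac{1}{2}) > \tfrac{3}{2}$, this space embeds into $C^1(\bar I;\mathbb R^2)$, so for $\varepsilon$ small the $f_\ast^\varepsilon$ are regular curves. However, they will in general violate both \eqref{initialdatum}-type conditions at the endpoints. I would correct this by adding a smooth compactly-supported-near-the-boundary perturbation: using the Whitney-type extension that prescribes the values and first derivatives of $f_\ast^\varepsilon$ at $\sigma \in \{0,1\}$ so that the endpoints lie on $\mathbb R \times \{0\}$ and so that the tangent direction at those points makes angle $\pi-\alpha$ with $\binom{0}{-1}$. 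Because $f_0$ itself satisfies these conditions and the trace of $f_\ast^\varepsilon$ up to first order converges to that of $f_0$, the required correction is $O(\varepsilon)$ in $W_2^{4(\mu-\nicefrac{1}{2})}$, and the resulting curve $f_\ast$ can be taken in $C^5(\bar I;\mathbb R^2)$, regular, with the correct boundary contact and the correct contact angle.

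Next I would express $f_0$ as a height function over $f_\ast$. Using the $C^5$ normal field $n_{f_\ast}$ of the reference curve, one seeks $\rho : \bar I \to \mathbb R$ and a reparametrization $\varphi : \bar I \to \bar I$ such that
\[
f_0(\varphi(\sigma)) = f_\ast(\sigma) + \rho(\sigma)\, n_{f_\ast}(\sigma).
\]
For $f_\ast$ sufficiently close to $f_0$ in $C^1$ such a decomposition is obtained by the implicit function theorem in a tubular neighborhood of $f_\ast(\bar I)$; the regularity transfer then gives $\rho \in W_2^{4(\mu-\nicefrac{1}{2})}(I)$ together with the quantitative estimate $\|\rho\|_{W_2^{4(\mu-\nicefrac{1}{2})}(I)} \leq C\,\|f_0-f_\ast\|_{W_2^{4(\mu-\nicefrac{1}{2})}(I;\mathbb R^2)}$, which can be made arbitrarily small by choosing $f_\ast$ close enough. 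The matching boundary conditions on $f_0$ and $f_\ast$ ensure that $\rho$ satisfies the compatibility conditions at $\sigma \in \{0,1\}$ required by Theorem \ref{local}.

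Finally, applying Theorem \ref{local} to the height function $\rho$ over the reference $f_\ast$ yields a strong solution $\tilde f$ on some interval $[0,T)$ with $\tilde f(0,\cdot) = f_0 \circ \varphi$. Reparametrizing $\tilde f(t,\cdot)$ by $\varphi^{-1}$ in the spatial variable (which preserves all the spaces in Definition \ref{strong} since $\varphi$ is a regular $C^1$-diffeomorphism, and since curve diffusion is a geometric flow the evolution law is parametrization-invariant) produces the required strong solution $f$ of \eqref{0}--\eqref{3blow} with $f(0,\cdot) = f_0$. The only delicate point is the construction step: ensuring that the smooth approximant $f_\ast$ can simultaneously satisfy the geometric boundary conditions \emph{and} approximate $f_0$ in the correct fractional Sobolev norm; everything else is a straightforward application of the existing short time existence theorem.
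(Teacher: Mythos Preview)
Your overall strategy --- manufacture a smooth reference curve close to $f_0$ and invoke Theorem~\ref{local} --- is exactly the paper's, but your execution has two genuine gaps.

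First, the reference curve must satisfy \emph{all} of \eqref{bound1}, in particular $\kappa_\Lambda(\sigma)=0$ at $\sigma\in\{0,1\}$. Your mollification-plus-boundary-correction only fixes the value and first derivative at the endpoints; the second-order condition is never imposed. The paper avoids this by generating $f_\epsilon$ as the solution of the sixth-order parabolic problem \eqref{sys}, whose boundary conditions already enforce $\partial_x^2 f=0$ at the endpoints for every $t>0$.

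Second, and more seriously, you assert that $\|\rho\|$ ``can be made arbitrarily small by choosing $f_\ast$ close enough,'' but the smallness threshold in Theorem~\ref{local} is \emph{not fixed}: the constant $K_0$ in \eqref{small} is proportional to $\|\kappa_\Lambda\|_{C(\bar I)}^{-1}$. Since $f_0$ lies only in $W_2^{4(\mu-1/2)}$ with $4(\mu-\tfrac12)\le 2$, it is generically not $C^2$, so any $C^5$ approximation $f_\ast$ will have $\|\kappa[f_\ast]\|_{C^0}\to\infty$ as $f_\ast\to f_0$. You must show that the height function decays \emph{faster} than the threshold $K_0$ collapses. The paper makes precisely this point before Lemma~\ref{refcurve3} and devotes Lemma~\ref{absch} to quantitative rates: $\|f_\epsilon-f_0\|_{C^0}\lesssim \epsilon^{\frac23\mu-\frac5{12}-\delta}$ versus $\|\kappa[f_\epsilon]\|_{C^0}\lesssim \epsilon^{-\frac34+\frac23\mu-\delta}$, and the proof of Claim~\ref{cl1} checks that the product of these powers is positive. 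With mollification you would need analogous rate estimates, which you do not supply.

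A smaller point: your height decomposition uses only the normal direction $n_{f_\ast}$, whereas Theorem~\ref{local} is formulated for the tilted coordinates \eqref{curvilin} with the tangential correction $\cot\alpha\,\eta\,\tau_\Lambda$; for $\alpha\neq\tfrac\pi2$ a purely normal graph over a boundary point on $\mathbb R\times\{0\}$ leaves the $x$-axis, so the compatibility condition \eqref{komp} would fail.
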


In order to prove the result, we have to assure that for every admissible regular initial curve there exists a suitable reference curve. By evolving the initial curve by a linear parabolic equation, we obtain a smoothened curve close to the initial curve. In the following, we use $C_0$-semigroup and interpolation theory to carry out technical estimates, which provide control on the distance of the two curves. Moreover, we find conditions on the distance of two curves which guarantee that one curve is a reference curve of the other one. Combining those steps enables us to confirm that the solution of the aforementioned parabolic equation is in fact a suitable reference curve.\\

A related result for the curve diffusion flow with a free boundary was proven by Wheeler and Wheeler in \cite{wheelerwheeler}: They consider immersed curves supported on two parallel lines moving according to curve diffusion flow, such that the evolving curves are orthogonal to the boundary and satisfy a no flux condition. A blow-up criterion is given in terms of the sum of the position vector and the $L_2$-norm of the arc length derivative of the curvature. Moreover, they establish criteria for global existence of the flow. \\

This article is organized as follows: In Section \ref{pre}, we present some preliminary results. Then, we construct reference curves to general admissible initial data in Section \ref{const}. Finally, we give a proof of the blow-up criterion Theorem \ref{c} in Chapter \ref{main}.

\section{Preliminaries} \label{pre} 

In this section, we introduce weighted Slobodetskii space and some of their properties. Moreover, we present two inequalities for the length and a local wellposedness result for the curve diffusion flow.

\subsection{Weighted Sobolev-Slobodetskii Spaces and Some Properties} 

In the following, we present some results on weighted Sobolev-Slobodetskii spaces. For more details, the reader is referred to \cite{meydiss} and \protect{\cite{meyries_inter}}.

\begin{definition} [Weighted Lebesgue Space]
	\label{defleb}
	Let $J = (0, T)$, $0 < T \leq \infty$ and $E$ be a Banach space. For $1 < p < \infty$ and $\mu \in \left( \frac{1}{p}, 1 \right]$ the \textbf{weighted Lebesgue space} is given by
	\begin{align*}
	L_{p, \mu}(J; E) := \left\{u: J \to E \textrm{ is strongly measurable}: \|u\|_{L_{p, \mu}(J; E)} <  \infty \right\},
	\end{align*} 
	where
	\begin{align*}
	\|u\|_{L_{p, \mu}(J; E)} := \left\| \left[t \mapsto t^{1- \mu} u(t) \right] \right\|_{L_{p}(J; E)} = \left( \int_{J} t^{(1-\mu)p} \|u(t)\|_E^{p} \dd t \right)^{\frac{1}{p}}.
	\end{align*}
\end{definition}

\begin{rem} \label{nummer}
	\begin{enumerate}
		\item $(L_{p, \mu}(J; E), \|u\|_{L_{p, \mu}(J; E)})$ is a Banach space.
		\item One easily sees that for $T < \infty$ it follows
		\begin{align*}
		L_{p}(J; E) \hookrightarrow L_{p, \mu}(J; E).
		\end{align*}
		This does not hold true for $T = \infty$.
		\item We have $L_{p, \mu}((0, T); E) \subset L_{p}((\tau, T); E)$ for $\tau \in (0, T)$.
		\item For $\mu = 1$ it holds $L_{p,1}(J; E) = L_{p}(J; E)$.
	\end{enumerate}
\end{rem}

Moreover, we define associated weighted Sobolev spaces.
\begin{definition} [Weighted Sobolev Space]
	\label{defsobo}
	Let $J = (0, T)$, $0 < T \leq \infty$ and $E$ be a Banach space. For $1 \leq p < \infty$, $k \in \mathbb{N}_0$, and $\mu \in \left( \frac{1}{p}, 1 \right]$ the \textbf{weighted Sobolev space} is given by
	\begin{align*}
	W^k_{p, \mu}(J; E) 
	%= H^k_{p, \mu}(J; E) 
	:= \left\{u \in W^k_{1, \textrm{loc}} (J; E): u^{(j)} \in L_{p, \mu}(J; E) \textrm{ for } j\in\{0, \dots, k\}  \right\}
	\end{align*}
	for $k \neq 0$, where $u^{(j)} := \big(\frac{\dd}{\dd t} \big)^j u$, and we set $W^0_{p, \mu}(J; E) := L_{p, \mu}(J; E)$. We equip it with the norm
	\begin{align*}
	\|u\|_{W^k_{p, \mu}(J; E)} := \left(\sum_{j=0}^{k} \left\|u^{(j)} \right\|^p_{L_{p, \mu}(J; E)} \right)^{\frac{1}{p}}.
	\end{align*}
\end{definition}

\begin{rem}
	$(W^k_{p, \mu}(J; E), \|u\|_{W^k_{p, \mu}(J; E)})$ is a Banach space, see Section 3.2.2 of \protect{\cite{triebel}}.
\end{rem}

In the following, we introduce a generalization of the usual Sobolev spaces by means of interpolation theory. By $(\cdot,\cdot)_{\theta, p}$ we denote real interpolation functor, cf.\ \protect{\cite{lunardi}} or \protect{\cite{triebel}} for general properties of real interpolation.
\begin{definition} [Weighted Sobolev-Slobodetskii Space]
	\label{defslobo} \label{defbessel}
	Let $J = (0, T)$, $0 < T \leq \infty$ and $E$ be a Banach space. For $1 \leq p < \infty$, $s \in \mathbb{R}^+ \backslash \mathbb{N}$, and $\mu \in \left( \frac{1}{p}, 1 \right]$ the \textbf{weighted Sobolev-Slobodetskii space} is given by
	\begin{align*}
	W^s_{p, \mu} (J; E) &:= \left(W^{\lfloor s \rfloor}_{p, \mu}(J; E), W^{\lfloor s \rfloor + 1}_{p, \mu}(J; E)\right)_{s - \lfloor s \rfloor, p}.
	\end{align*}
\end{definition}

\begin{rem} \label{nummer1}
	\begin{enumerate} 
		\item $W^s_{p, \mu} (J; E)$ is a Banach space by interpolation theory, cf.\ Proposition 1.2.4 in \protect{\cite{lunardi}}.
		\item We have $W^s_{p, 1}(J; E) =  W^s_{p}(J; E)$ for all $s \geq 0$.
		\item By equation (2.8) in \protect{\cite{meyries_inter}}, we have for $s = \lfloor s \rfloor + s^*$
		\begin{align*}
		W^s_{p, \mu}(J; E) &= \left\{u \in W^{\lfloor s \rfloor}_{p, \mu}(J; E) : u^{(\lfloor s \rfloor)} \in W^{s^*}_{p, \mu}(J; E) \right\},
		\end{align*}
		where the natural norm is equivalent to the interpolation space norm with constants independent of $J$.
		\item By interpolation theory, see (2.6) in \protect{\cite{meyries_inter}}, we have the following representation of the Sobolev-Slobodetskii space: For $s \in (0, 1)$ it holds
		\begin{align*}
		W^s_{p, \mu} (J; E) =& \left\{ u \in L_{p, \mu} (J; E): [u]_{W^s_{p, \mu} (J; E)} < \infty \right\},
		\end{align*}
		where 
		\begin{align*}
		[u]_{W^s_{p, \mu} (J; E)} := \left( \int_0^T \int_0^t \tau^{(1 - \mu)p} \frac{\|u(t) - u(\tau)\|_E^p}{|t - \tau|^{1 + sp}} \dd \tau \dd t \right) ^{\frac{1}{p}}.
		\end{align*}
		Then, the norm given by
		\begin{align*}
		\|u\|_{W^s_{p, \mu} (J; E)} := \|u\|_{L_{p, \mu} (J; E)} + [u]_{W^s_{p, \mu} (J; E)} 
		\end{align*}
		is equivalent to the one induced by interpolation.
		\item If $E = \mathbb{R}$ is the image space, we omit it, e.g.\ $W^s_{p, \mu} (J) := W^s_{p, \mu} (J; \mathbb{R})$.
	\end{enumerate}
\end{rem}

We will use the following results involving weighted Sobolev-Slobodetskii spaces. 

\begin{prop} 
	%\label{unab}
Let $0 < T_0 < \infty$ be fixed and $J = (0, T)$ for $0 < T \leq T_0$. Moreover, let $\mu \in \left(\frac{1}{p}, 1 \right]$ and $E$ be a Banach space. 			\label{unab0b} Let $1 < p < \infty$, $2 \geq s > 1 - \mu + \frac{1}{p}$, and $\alpha \in (0, 1)$.\\ 
			Then $W^{s}_{p, \mu}(J; E) \hookrightarrow C^{\alpha}(\bar{J}; E)$ for $s - \left(1 - \mu\right) + \frac{1}{p}  > \alpha > 0$ with the estimate
			\begin{align*}
			\|\rho\|_{C^{\alpha}(\bar{J}; E)} \leq C(T_0) (\|\rho\|_{W^{s}_{p, \mu}(J; E)} + \|\rho_{| t = 0}\|_{E}),
			\end{align*}
	where $C(T_0)$ does not depend on $T\in (0,T_0]$.
\end{prop}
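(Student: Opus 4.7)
The plan is to prove the embedding first in the integer-order cases and interpolate to the fractional ones, while carefully tracking the dependence of the constant on $T$. For $s = 1$ and $u \in W^1_{p,\mu}(J; E)$, the fundamental theorem of calculus gives $u(t) - u(\tau) = \int_\tau^t u'(r) \dd r$ for $0 < \tau < t$. Writing the integrand as $r^{-(1-\mu)} \cdot r^{1-\mu} u'(r)$ and applying Hölder's inequality with conjugate exponent $p' = p/(p-1)$ yields
$$\|u(t) - u(\tau)\|_E \le \|u'\|_{L_{p,\mu}(J;E)} \left(\int_\tau^t r^{-(1-\mu)p'} \dd r\right)^{1/p'}.$$
The assumption $\mu > 1/p$ is equivalent to $(1-\mu)p' < 1$, so the integral converges up to $\tau = 0$; an explicit evaluation combined with the elementary inequality $t^\beta - \tau^\beta \le (t-\tau)^\beta$ for $\beta \in (0,1]$ produces a bound with Hölder exponent $\mu - 1/p = 1 - (1-\mu) - 1/p$. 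In particular, $\{u(t)\}_{t > 0}$ is Cauchy as $t \to 0^+$ and $u$ extends continuously to $[0, T]$ with boundary value coinciding with the trace $\rho_{|t = 0}$; the $L^\infty$ bound then follows from this trace together with the just-established Hölder estimate.

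For $s \in (1 - \mu + 1/p,\, 1)$, I would work directly with the Slobodetskii seminorm representation of Remark \ref{nummer1}(iv). Given $0 < t_1 < t_2 \le T$ with $h = t_2 - t_1$, a weighted Morrey-type argument based on dyadic averaging around $t_1$ and $t_2$ and telescoping would yield
$$\|u(t_2) - u(t_1)\|_E \le C\, h^{s - (1-\mu) - 1/p}\, [u]_{W^s_{p,\mu}(J;E)},$$
where each telescoping difference is controlled by the Slobodetskii seminorm through Hölder's inequality in the weighted measure $\tau^{(1-\mu)p} \dd \tau$. The condition $s > (1-\mu) + 1/p$ is exactly what makes the averaging integrals convergent and produces a positive Hölder exponent; continuity at $t = 0$ follows by a parallel Cauchy-type argument together with the trace result for weighted Sobolev–Slobodetskii spaces in \cite{meydiss}.

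For $s \in (1, 2]$, the case $s = 1$ applied to $u$ and to $u'$ already yields $W^1_{p,\mu} \hookrightarrow C^{\mu - 1/p}$ and $W^2_{p,\mu} \hookrightarrow C^{1,\mu - 1/p}$. Using the interpolation identity $W^s_{p,\mu} = (W^1_{p,\mu}, W^2_{p,\mu})_{s-1, p}$ from Definition \ref{defslobo} together with the behavior of real interpolation on Hölder scales, I obtain $W^s_{p,\mu} \hookrightarrow C^\alpha$ for $\alpha < s - (1-\mu) - 1/p$ (with arbitrarily small loss, which is absorbed by the open inequality on $\alpha$). Uniformity of the constant $C$ in $T \in (0, T_0]$ is then handled by constructing an extension operator $W^s_{p,\mu}((0,T);E) \to W^s_{p,\mu}((0, T_0); E)$ via reflection at $t = T$ combined with a cutoff supported in $(0, T_0)$ whose operator norm depends only on $T_0$, and then applying the embedding on the fixed interval $(0, T_0)$.

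The main obstacle will be the fractional case $s \in (0, 1)$: extracting a pointwise Hölder estimate from the Slobodetskii seminorm under the singular weight $\tau^{(1-\mu)p}$ requires balancing the averaging scales near $t = 0$, where the weight is small and the integrand consequently large. The slack coming from $\mu > 1/p$ together with $s > (1-\mu) + 1/p$ is exactly what makes the dyadic telescoping converge, and a sharper analysis shows both conditions are in fact necessary for the embedding to hold.
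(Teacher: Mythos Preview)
The paper does not actually prove this proposition; it defers to Lemma~2.1.15 in the thesis \cite{butzdiss}. So there is no in-paper argument to compare against, and your proposal is effectively a self-contained reconstruction.

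Your treatment of the case $s=1$ is correct and in fact already delivers the $T$-independence without any extension trick: the constant arising from $\bigl(\int_\tau^t r^{-(1-\mu)p'}\,\mathrm{d}r\bigr)^{1/p'}\le C(\mu,p)\,(t-\tau)^{\mu-1/p}$ depends only on $\mu$ and $p$, and then $\|u\|_{C^0}\le\|u(0)\|_E+T_0^{\alpha}[u]_{C^\alpha}$ closes the estimate with a constant depending only on $T_0$. So the extension-by-reflection argument you sketch at the end is not needed here. Where it \emph{is} needed is in the interpolation step for $s\in(1,2]$: the identification $(W^1_{p,\mu}(0,T),W^2_{p,\mu}(0,T))_{s-1,p}=W^s_{p,\mu}(0,T)$ must hold with equivalence constants independent of $T$, and this is exactly the kind of statement one proves via a $T$-uniform extension operator to $(0,T_0)$ (this is how Meyries--Schnaubelt handle such uniformity in \cite{meyries_inter}). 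Your reflection-at-$T$ idea works for this because the weight $t^{1-\mu}$ is bounded above and below on $[T/2,T_0]$, so the reflected piece lives in an unweighted space there; but you should say explicitly how you pass from $(0,2T)$ to $(0,T_0)$ when $2T<T_0$ (iterated reflection, or zero extension after a cutoff once you are away from $t=0$).

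The genuine soft spot is the fractional range $s\in(1-\mu+1/p,1)$. ``Dyadic averaging and telescoping'' is the right heuristic, but the weighted seminorm in Remark~\ref{nummer1}(iv) is asymmetric in $(t,\tau)$ (the weight sits on the inner variable only), so the standard Campanato/Morrey argument needs adaptation: near $t=0$ you must place your averaging intervals to the right of the base point, and the telescoping constants pick up a factor coming from the ratio of weights on adjacent dyadic scales. This is where the hypothesis $s>(1-\mu)+1/p$ enters sharply, and you should display at least one of the dyadic difference estimates to make the convergence of the telescoping sum explicit. As written, this step is a plan rather than a proof.
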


\begin{rem} \label{afterunab}
	By the latter proposition, we can also prove the following:
	Let $1 < p < \infty$, $k \in \mathbb{N}$. Then $W^{s}_{p}(J; E) \hookrightarrow C^{k, \alpha}(\bar{J}; E)$ for $s - \nicefrac{1}{p} > k + \alpha > 0$ with the estimate
	\begin{align*}
	\|\rho\|_{C^{k, \alpha}(\bar{J}; E)} \leq C \|\rho\|_{W^{s}_{p}(J; E)},
	\end{align*}
	where $C$ depends on $J$.
\end{rem}

The proof can be found in Lemma 2.1.15 in \protect{\cite{butzdiss}}. 
Moreover, we deduce the following lemma, cf.\ Lemma 2.1.17 in \protect{\cite{butzdiss}}.
\begin{lem} 
	Let $0 < T_0 < \infty$ be fixed and $J= (0, T)$ for $0 < T \leq T_0$. Moreover, let $\mu \in \left(\frac{7}{8},1 \right]$. 
			\label{lem3} Let $f \in W^{s}_{2, \mu} (J)$, $1 > s > \frac{1}{2} - \mu$ such that there exists a $\tilde{C} > 0$ with $|f| \geq \tilde{C}$. Then $\tfrac{1}{f} \in W^{s}_{2, \mu} (J)$ with
			\begin{align*}
			\left\|\tfrac{1}{f} \right\|_{W^{s}_{2, \mu} (J)} \leq C \left(\|f\|_{W^{s}_{2, \mu} (J)}, \tilde{C}, T_0 \right).
			\end{align*}
\end{lem}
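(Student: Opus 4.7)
The plan is to use the equivalent norm characterization from Remark 2.7(iv): for $s \in (0,1)$, we have $\|u\|_{W^s_{2,\mu}(J)} \sim \|u\|_{L_{2,\mu}(J)} + [u]_{W^s_{2,\mu}(J)}$, where the second term is the weighted Slobodetskii seminorm given by the weighted double integral defined there. Thus it suffices to bound each of these two contributions for $1/f$ separately. Note that with $\mu \in (7/8, 1]$ and $s > 0$ the hypothesis $1 > s > \tfrac{1}{2} - \mu$ places $s$ in $(0,1)$, so the characterization in Remark 2.7(iv) applies.

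First I would estimate the $L_{2,\mu}$-norm of $1/f$. The hypothesis $|f| \geq \tilde{C}$ gives $|1/f(t)| \leq 1/\tilde{C}$ for almost every $t \in J$, so the weighted $L_2$-norm is bounded by $\tilde{C}^{-1} \bigl(\int_0^T t^{2(1-\mu)} \dd t\bigr)^{1/2}$. Since $\mu \in (7/8, 1]$, the exponent $2(1-\mu)$ is non-negative, so this weight integral converges and is bounded uniformly for $T \in (0, T_0]$ by a constant depending only on $T_0$ and $\mu$. Second, I would bound the Slobodetskii seminorm by exploiting the pointwise identity $\tfrac{1}{f(t)} - \tfrac{1}{f(\tau)} = \tfrac{f(\tau) - f(t)}{f(t) f(\tau)}$. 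The lower bound $|f(t) f(\tau)| \geq \tilde{C}^2$ then yields $\bigl|\tfrac{1}{f(t)} - \tfrac{1}{f(\tau)}\bigr| \leq \tilde{C}^{-2} |f(t) - f(\tau)|$ almost everywhere on $J \times J$. Inserting this inequality under the integrand of the weighted double integral defining $[1/f]_{W^s_{2,\mu}(J)}$ gives $[1/f]_{W^s_{2,\mu}(J)} \leq \tilde{C}^{-2} [f]_{W^s_{2,\mu}(J)} \leq \tilde{C}^{-2} \|f\|_{W^s_{2,\mu}(J)}$.

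Combining the two estimates yields a bound of the form $C(T_0, \mu) \tilde{C}^{-1} + \tilde{C}^{-2} \|f\|_{W^s_{2,\mu}(J)}$, which is exactly the asserted dependence. There is no serious obstacle; the whole argument rests on the two elementary pointwise inequalities above together with monotonicity of the integral representation. The only subtlety worth stating is that $|f| \geq \tilde{C}$ must be understood a.e.\ on $J$ (since elements of $W^s_{2,\mu}(J)$ are equivalence classes), but this is sufficient because the integral representation of both the weighted $L_{2}$-norm and the Slobodetskii seminorm only depends on the almost-every values of the integrand.
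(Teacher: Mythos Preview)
Your argument is correct: the pointwise bound $|1/f|\le\tilde C^{-1}$ controls the weighted $L_{2,\mu}$-part, and the identity $\tfrac{1}{f(t)}-\tfrac{1}{f(\tau)}=\tfrac{f(\tau)-f(t)}{f(t)f(\tau)}$ together with $|f|\ge\tilde C$ controls the Slobodetskii seminorm via Remark~\ref{nummer1}(iv). The paper does not give its own proof of this lemma but refers to Lemma~2.1.17 in \cite{butzdiss}; your direct computation is the natural argument and is what one would expect to find there.
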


Additionally, we will exploit the following embedding, cf.\ Lemma 2.2.3 in \cite{butzdiss}.
\begin{lem}
	Let $T_0$ be fixed, $J = (0, T)$, $0 < T \leq T_0$, and $I$ a bounded open interval. Let $E$ be a Banach space.
	\label{unab1}
	Then
	\begin{align*}
	W^1_{2, \mu} \big(J; L_{2} (I; E)\big) \cap L_{2, \mu} \big(J; W^{2m}_{2} (I; E)\big)  \hookrightarrow BUC \big(\bar{J}, W_2^{2m\left(\mu - \nicefrac{1}{2}\right)} (I; E) \big)
	\end{align*}
	with the estimate
	\begin{align*}
	\|\rho\|_{BUC (\bar{J}, X_{\mu, E})} \leq C(T_0) \left(\|\rho\|_{W^1_{2, \mu} (J; L_{2} (I; E)) \cap L_{2, \mu} (J; W^{2m}_{2} (I; E))} + \|\rho_{| t = 0}\|_{W_2^{2m (\mu - \nicefrac{1}{2})} (I; E) } \right). 
	\end{align*}
\end{lem}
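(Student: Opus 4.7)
The plan is to reduce the claim to the known temporal trace theorem for weighted Bochner-Sobolev spaces (cf.\ Pr\"uss-Simonett, Meyries-Schnaubelt), which identifies the initial-trace space of $W^1_{p,\mu}(\mathbb{R}_+;X_0)\cap L_{p,\mu}(\mathbb{R}_+;X_1)$ as the real interpolation space $(X_0,X_1)_{\mu-\frac{1}{p},\,p}$ and produces a continuous trace map, and then to identify this interpolation space concretely. The appearance of $\|\rho_{|t=0}\|$ on the right-hand side of the estimate is the standard device that allows the embedding constant to be controlled by $T_0$ alone rather than by $T$, since the only part of a general $\rho$ that cannot be extended from $J$ to a fixed larger interval without constants blowing up as $T\to 0$ is its constant-in-time part.

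First I would specialize the general weighted trace result to $p=2$, $X_0=L_2(I;E)$, $X_1=W^{2m}_2(I;E)$. The interpolation identity
\begin{align*}
\bigl(L_2(I;E),\, W^{2m}_2(I;E)\bigr)_{\mu-\frac{1}{2},\,2} \;=\; W^{2m(\mu-\frac{1}{2})}_2(I;E)
\end{align*}
holds by standard real interpolation of $L_2$-based vector-valued Sobolev spaces, noting that $\mu-\frac{1}{2}\in(0,\frac{1}{2}]$ keeps the interpolation order in the admissible range. This immediately produces a continuous initial trace into $W^{2m(\mu-\frac{1}{2})}_2(I;E)$ for functions in the intersection space on $\mathbb{R}_+$.

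To upgrade this to a $BUC$-embedding on $\bar J$ with a constant depending only on $T_0$, I would split $\rho=\rho(0)+\tilde\rho$, where $\rho(0)$ is viewed as constant in time and contributes exactly the term $\|\rho_{|t=0}\|_{W^{2m(\mu-\frac{1}{2})}_2(I;E)}$ to the $BUC$-norm. For the remainder $\tilde\rho$, which vanishes at $t=0$ in the trace-space sense, I would construct an extension operator $\mathcal{E}$ from $J=(0,T)$ to $(0,T_0)$ (or $\mathbb{R}_+$) whose operator norm between the intersection spaces is bounded uniformly in $T\in(0,T_0]$; a reflection across $t=T$ combined with a smooth cutoff supported in $[0,T_0]$ is the natural candidate, since on the reflected piece the temporal weight $t^{1-\mu}$ is comparable to the weight on the original piece with constants depending only on $T_0$. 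Applying the trace theorem on the fixed reference interval to $\mathcal{E}\tilde\rho$ then yields the desired estimate with constant $C(T_0)$. Continuity in $t$ at interior points of $\bar J$ follows by shifting: on any compact subinterval of $(0,T_0]$ the weight $t^{1-\mu}$ is bounded above and below, so the unweighted maximal-regularity trace theorem gives continuity; continuity at $t=0$ comes from approximating $\tilde\rho$ by smooth functions vanishing near $0$, for which the claim is trivial.

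The main technical obstacle will be the uniformity of the constant in $T$. A naive zero extension of $\tilde\rho$ destroys the $W^1$-regularity across $t=T$, while an uncontrolled reflection produces unfavorable ratios between the original and reflected weights as $T\to 0$. The reflection-plus-cutoff extension, used together with the splitting $\rho=\rho(0)+\tilde\rho$ that isolates the part of the function responsible for any potential blow-up of the embedding constant, is the key technical ingredient that makes the $T_0$-uniform estimate possible.
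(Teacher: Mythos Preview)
Your overall strategy is correct and is the standard one: invoke the weighted trace theorem of Meyries--Schnaubelt / Pr\"uss--Simonett to identify the temporal trace space with $(L_2(I;E),W^{2m}_2(I;E))_{\mu-1/2,2}=W^{2m(\mu-1/2)}_2(I;E)$, then obtain the $T$-uniform constant by splitting off the initial value and extending the zero-trace remainder to a fixed reference interval. The paper does not actually give its own proof of this lemma; it simply refers to Lemma~2.2.3 of the thesis \cite{butzdiss}, so there is nothing in the paper itself to compare against, but the argument there follows the same route you describe.

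Two technical points in your extension step need repair. First, a plain even reflection $\tilde\rho(2T-t)$ does not preserve $W^1$ in time, since the derivative flips sign across $t=T$; you need a Hestenes-type extension (a finite linear combination of dilated reflections chosen to match value and first derivative at $T$). Second, your weight-comparability claim is false as stated: on the reflected piece the relevant weight is $(2T-s)^{1-\mu}$, which is \emph{larger} than $s^{1-\mu}$ near $s=0$, so the $L_{2,\mu}$-norm of the reflection is not directly controlled by that of $\tilde\rho$. What rescues the argument is precisely the condition $\tilde\rho(0)=0$: via a Hardy-type inequality this yields $t^{-1}\tilde\rho\in L_{2,\mu}$, hence enough extra decay at $t=0$ to absorb the bad weight on the reflected piece with a constant depending only on $T_0$. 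Alternatively, one can simply quote the $T$-uniform extension operators already constructed in \cite{meyries_inter}, which build this in. With these adjustments your outline goes through.
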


\subsection{Two Estimates for the Length }

We introduce the following quantities:
\begin{definition} [Length, Energy]
	Let $g: \bar{I} \rightarrow \mathbb{R}^2$, $x \mapsto g(x)$, be a regular $C^1$-curve. We define the \textbf{length of the curve $g$} and the \textbf{energy of the curve $g$} by
\begin{align*}
	\mathcal{L}[g] :=  \int_{I} |\partial_x g(x)| \dd x, &\qquad
	\mathcal{E}[g] := \mathcal{L}[g] + \cos{\alpha} [g(0) - g(1)]_1, \ \text{resp.}
\end{align*}
\end{definition}

\begin{lem} \label{kappabound}
	Let $\alpha \in (0, \pi)$. Furthermore, let $c: [0, 1] \rightarrow \mathbb{R}^2, \sigma \mapsto c(\sigma),$ be a regular curve of class $C^2$ parametrized proportional to arc length. Moreover, let the unit tangent $\tau := \frac{\partial_\sigma c}{\mathcal{L}[c]}$ fulfill $\tau (\sigma) = (\cos \alpha, \pm \sin \alpha)^T$ for $\sigma = 0, 1.$
	Then it holds
	\begin{align*}
	\frac{1}{\mathcal{L}[c]} \leq \frac{1}{\sqrt{2}\sin \alpha} \left\|\kappa[c] \right\|_{C([0, 1])}.
	\end{align*}
\end{lem}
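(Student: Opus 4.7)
The plan is to relate the length $L := \mathcal{L}[c]$ directly to the variation of the unit tangent along the curve, which is in turn controlled by the curvature thanks to the Frenet relation. The boundary conditions on $\tau(0)$ and $\tau(1)$ then provide a positive lower bound for this variation, which can be converted into the claimed lower bound for $1/L$.

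First, since $c$ is parametrized proportional to arc length we have $|\partial_\sigma c|\equiv L$, so with $s=L\sigma$ the Frenet formula $\partial_s\tau = \kappa\, n$ translates to
\[
\partial_\sigma \tau(\sigma)\;=\;L\,\kappa(\sigma)\,n(\sigma),
\]
where $n$ is the unit normal to $c$. Applying the fundamental theorem of calculus and taking the Euclidean norm yields, using $|n|=1$,
\[
|\tau(1)-\tau(0)|\;=\;\Bigl|L\int_0^1 \kappa(\sigma)\,n(\sigma)\dd\sigma\Bigr|\;\leq\;L\int_0^1|\kappa(\sigma)|\dd\sigma\;\leq\;L\,\|\kappa[c]\|_{C([0,1])}.
\]

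Next, I would evaluate the left-hand side explicitly from the boundary data. The hypothesis $\tau(\sigma)=(\cos\alpha,\pm\sin\alpha)^T$ at $\sigma\in\{0,1\}$ is to be read so that the two $\pm$-signs differ (this is the geometrically relevant case, and indeed the only one in which the statement can hold—otherwise a straight segment would be a counterexample). Under this reading the first components of $\tau(0)$ and $\tau(1)$ coincide, while the second components differ by $2\sin\alpha$, so
\[
|\tau(1)-\tau(0)|\;=\;2\sin\alpha.
\]
Combining with the previous inequality gives $2\sin\alpha\leq L\,\|\kappa[c]\|_{C([0,1])}$, i.e.
\[
\frac{1}{\mathcal{L}[c]}\;\leq\;\frac{1}{2\sin\alpha}\,\|\kappa[c]\|_{C([0,1])}\;\leq\;\frac{1}{\sqrt{2}\sin\alpha}\,\|\kappa[c]\|_{C([0,1])},
\]
where the last (trivial) step, using $\sqrt{2}\leq 2$, produces the constant stated in the lemma.

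There is no serious obstacle here: the only point requiring care is to interpret the $\pm$ in the boundary condition correctly so that the two tangents produce a nonzero difference $2\sin\alpha$. Once that is observed, the proof is a one-line application of the Frenet relation and the fundamental theorem of calculus.
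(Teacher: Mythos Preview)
Your argument is correct. The paper itself does not give a proof of this lemma but only refers to Lemma~2.14 in \cite{butzprint1} (equivalently Lemma~2.3.1 in \cite{butzdiss}), so there is no in-text proof to compare against; your direct use of the Frenet relation $\partial_\sigma\tau = \mathcal{L}[c]\,\kappa\,n$ together with the fundamental theorem of calculus is exactly the natural approach and in fact yields the sharper constant $\tfrac{1}{2\sin\alpha}$ before you relax it to $\tfrac{1}{\sqrt 2\,\sin\alpha}$.

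Your remark about the $\pm$ is also the right reading: in the geometric situation of the paper (cf.\ Figure~\ref{1} and the boundary condition \eqref{4blow}) one has $\tau(0)=(\cos\alpha,\sin\alpha)^T$ and $\tau(1)=(\cos\alpha,-\sin\alpha)^T$, so the two signs are indeed opposite; as you observe, with equal signs a straight segment would violate the inequality, so this is the intended interpretation of the hypothesis.
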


The proof can be found in Lemma 2.14 in \cite{butzprint1} or Lemma 2.3.1 in \cite{butzdiss}.\\ 

\begin{lem} \label{lengthbounded}
Let $f$ be a strong solution of the curve diffusion flow. Then it holds
for $\alpha \in (0, \pi)$ and $\tilde{t} \in [0, T)$
\begin{align*}
\mathcal{L}[f(t)] \leq \frac{\mathcal{E}[f(\tilde{t})]}{1 - |\cos \alpha|} && \textrm{ for all } t \in [\tilde{t}, T).
\end{align*}
\end{lem}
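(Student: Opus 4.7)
The plan is to show that the ``boundary-corrected length'' $\mathcal{E}[f(t)]$ is monotonically non-increasing along the flow, and then to exploit the elementary fact that the chord connecting the two endpoints is shorter than the curve itself in order to extract a bound on $\mathcal{L}[f(t)]$ alone.

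For the first variation of length under the velocity $W := \partial_t f$, an integration by parts based on the Frenet identity $\partial_s \tau = \kappa n$ (with $\tau := \partial_\sigma f/|\partial_\sigma f|$) gives
\[
\frac{d}{dt}\mathcal{L}[f(t)] = [\tau\cdot W]_0^1 - \int_{\Gamma_t}\kappa\,(n\cdot W)\,ds.
\]
Inserting the motion law $n\cdot W = V = -\partial_{ss}\kappa$, a second integration by parts and the no-flux condition $\partial_s\kappa|_{\partial\Gamma_t}=0$ from \eqref{3blow} reduce this to
\[
\frac{d}{dt}\mathcal{L}[f(t)] = [\tau\cdot W]_0^1 - \int_{\Gamma_t}(\partial_s\kappa)^2\,ds.
\]
Differentiating the correction term in $\mathcal{E}$ yields $\frac{d}{dt}\cos\alpha\,[f(t,0)-f(t,1)]_1 = -\cos\alpha\,[W\cdot e_1]_0^1$, so the boundary contributions in $\frac{d}{dt}\mathcal{E}[f(t)]$ combine to $[\tau\cdot W - \cos\alpha\,(W\cdot e_1)]_0^1$. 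At each endpoint, \eqref{2blow} keeps $f(t,\sigma)\in\mathbb{R}\times\{0\}$, so the boundary velocity is purely horizontal, $W=(W\cdot e_1)e_1$, while the angle condition \eqref{4blow} forces $\tau\cdot e_1=\cos\alpha$ at $\sigma\in\{0,1\}$ (cf.\ Lemma \ref{kappabound}). Hence the bracketed term vanishes and
\[
\frac{d}{dt}\mathcal{E}[f(t)] = -\int_{\Gamma_t}(\partial_s\kappa)^2\,ds \leq 0,
\]
so $\mathcal{E}[f(t)]\leq \mathcal{E}[f(\tilde t)]$ for all $t\in[\tilde t,T)$.

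To finish, I would estimate
\[
|\cos\alpha\,[f(t,0)-f(t,1)]_1| \leq |\cos\alpha|\cdot|f(t,0)-f(t,1)| \leq |\cos\alpha|\,\mathcal{L}[f(t)],
\]
since the Euclidean distance of the endpoints is bounded by the length of any connecting curve. Combining this with the monotonicity inequality $\mathcal{L}[f(t)]+\cos\alpha\,[f(t,0)-f(t,1)]_1\leq \mathcal{E}[f(\tilde t)]$ yields $(1-|\cos\alpha|)\mathcal{L}[f(t)]\leq \mathcal{E}[f(\tilde t)]$, which is the claim since $|\cos\alpha|<1$ for $\alpha\in(0,\pi)$.

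The only subtle point will be justifying the time differentiation and the boundary evaluations within the strong-solution class of Definition \ref{strong}: one has $f\in W^1_{2,\mu,\mathrm{loc}}([0,T);L_2)\cap L_{2,\mu,\mathrm{loc}}([0,T);W^4_2)$, and by Lemma \ref{unab1} this embeds into $C([\tilde t,T');W^{4(\mu-1/2)}_2(I;\mathbb{R}^2))$ for $\tilde t>0$, which, since $\mu>7/8$, is enough regularity to make sense of the traces of $\tau$, $\kappa$, $\partial_s\kappa$ and $W$ at the endpoints pointwise in $t$. The identity above can therefore be read as an equality of absolutely continuous functions of $t$ on $[\tilde t,T)$, making the monotonicity argument rigorous.
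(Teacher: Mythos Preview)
Your proof is correct and follows the standard approach one would expect (and which the reference in \cite{butzdiss} presumably also takes): first establish the energy identity $\tfrac{d}{dt}\mathcal{E}[f(t)]=-\int_{\Gamma_t}(\partial_s\kappa)^2\,ds$ via the first variation of length, the motion law, the no-flux condition, and the cancellation of the boundary terms forced by \eqref{2blow}--\eqref{4blow}; then combine the resulting monotonicity with the chord--arc inequality. The paper itself does not give a proof but only cites the thesis, so there is nothing further to compare.
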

The proof can be found in Remark 3.2.5 in \cite{butzdiss}.

\subsection{A Local Wellposedness Result for the Curve Diffusion Flow}

Let $\Phi^*: [0, 1] \rightarrow \mathbb{R}^2$ be a regular $C^5$-curve parametrized proportional to arc length, such that $\Lambda := \Phi^* ([0,1])$ fulfill the conditions
\begin{align}
\Phi^*(\sigma) &\in \mathbb{R} \times  \{0\} && \text{ for } \sigma \in \{0, 1\}, \nonumber \\
\measuredangle \left({n}_{\Lambda}(\sigma),  \begin{pmatrix} 0 \\ -1\end{pmatrix} \right) &= \pi - \alpha && \text{ for } \sigma \in \{0, 1\}, \nonumber \\
{\kappa}_{\Lambda} (\sigma) &= 0 && \text{ for } \sigma \in \{0, 1\},
\label{bound1}
\end{align}
where $\tau_{\Lambda}(\sigma) := \nicefrac{\partial_\sigma \Phi^* (\sigma)}{\mathcal{L}[\Phi^* ]}$
and $n_{\Lambda}(\sigma) := R\tau_{\Lambda}(\sigma)$ are the unit tangent and unit normal vector of $\Lambda$ at the point $\Phi^*(\sigma)$ for $\sigma \in [0, 1]$, respectively. Here, $R$ denotes the counterclockwise $\nicefrac{\pi}{2}$-rotation matrix. Furthermore, the curvature vector of $\Lambda$ at $\Phi^*(\sigma)$ is given by $\vec{\kappa}_{\Lambda}(\sigma) := \nicefrac{\partial^2_\sigma \Phi^* (\sigma)}{(\mathcal{L}[\Phi^* ])^2}$ for $\sigma \in [0, 1]$.

For a sufficiently small $d$ we define curvilinear coordinates by
\begin{align}
\Psi: [0, 1] \times (- d, d) & \rightarrow \mathbb{R}^2 \nonumber \\
(\sigma, q) & \mapsto \Phi^*(\sigma) + q \left(n_{\Lambda}(\sigma) + {\cot{\alpha}} \eta(\sigma) \tau_{\Lambda} (\sigma) \right), \label{curvilin}
\end{align}
where the function $\eta: [0, 1] \rightarrow [-1, 1]$ is given by
\begin{align}
\eta(x) := \begin{cases}
-1 & \textrm{ for } 0\leq x < \frac{1}{6}\\
0 & \textrm{ for } \frac{2}{6} \leq x < \frac{4}{6}\\
1 & \textrm{ for } \frac{5}{6} \leq x \leq 1 \\
\textrm{arbitrary} & \textrm{ else},
\end{cases}
\label{eta}
\end{align}
such that it is monotonically increasing and smooth. Note, that if $\alpha = \nicefrac{\pi}{2}$, then $\cot \alpha =0$ and the second summand in the definition of $\Psi$ vanishes, cf.\ \eqref{curvilin}.

In the following, $\rho: [0, T] \times [0, 1] \rightarrow (-d, d)$ serves as a height function such that $\Gamma$ can be expressed via \eqref{curvilin}. The main result of \cite{butzprint1} reads:
\begin{satz} [Local Well-Posedness for Data Close to a Reference Curve] \label{local}
	Let $\Phi^*$ and $\eta$ be given such that \eqref{bound1} and \eqref{eta} are fulfilled, respectively. Let $\rho_0 \in W_2^{4\left(\mu - \nicefrac{1}{2}\right)} (I)$, with $I = (0, 1)$ and $\mu \in \left(\frac78, 1 \right]$, fulfill the conditions
	\small
	\begin{align}
	\|\rho_0\|_{C(\bar{I})} &< \frac{K_0}{3} &&\textrm{ with }
	K_0(\alpha, \Phi^*, \eta) :=
	\frac{1}{2 \|\kappa_{\Lambda}\|_{C(\bar{I})} \left( 1 + (\cot \alpha)^2 + \hat{C}|\cot{\alpha}| \|\eta' \|_{C([0, 1])}  \right)} \\
	\|\partial_\sigma \rho_0\|_{C(\bar{I})} &< \frac{K_1}{3} &&\textrm{ with } K_1(\alpha, \Phi^*) := \frac{\mathcal{L}[\Phi^*]}{12 |\cot{\alpha}|},
	\label{small}
	\end{align}	
	\normalsize
	where $\hat{C} := \sqrt{2}\sin \alpha > 0$
	and the compatibility condition
	\begin{align}
	\partial_{\sigma} \rho_0 (\sigma) = 0 && \textrm{ for } \sigma \in \{0, 1\}.
	\label{komp}
	\end{align}
	 Furthermore, let $T_0>0$ and $R_2>0$ such that $\|\mathcal{L}^{-1} \| \leq R_2$, where $\mathcal{L}^{-1}$ is the solution operator for a linear partial differential equation depending on $\Phi^*$ and $\rho_0$.
	Then there exists a $T=T(\alpha, \Phi^*, \eta, R_1, R_2) > 0$, $\|\rho_0\|_{X_{\mu}} \leq R_1$, such that the quasilinear parabolic partial differential equation for the height function representing $\Gamma$, cf.\ (3.18) \cite{butzprint1}, possesses a unique solution $\rho \in W^1_{2, \mu} ((0, T); L_{2} (I)) \cap L_{2, \mu} ((0, T); W^{4}_{2} (I))$, such that $\rho(t)$ is a regular parametrization for all $t \in [0, T]$, and $\rho (\cdot, 0) = \rho_0$ in $ W_2^{4\left(\mu - \nicefrac{1}{2}\right)} (I)$.
\end{satz}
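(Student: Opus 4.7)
The plan is to recast the geometric problem as a quasilinear parabolic initial--boundary value problem for the scalar height function $\rho$ and solve it by maximal regularity combined with a Banach fixed point argument in the weighted class $\mathbb{E}_\mu(T) := W^1_{2,\mu}((0,T);L_2(I))\cap L_{2,\mu}((0,T);W^4_2(I))$. By Lemma~\ref{unab1} the temporal trace space of $\mathbb{E}_\mu(T)$ is precisely $W_2^{4(\mu-1/2)}(I)$, which matches the class of $\rho_0$, so the weight $\mu\in(7/8,1]$ is chosen exactly to accommodate rough initial data while still yielding the embeddings needed for the nonlinear estimates below.

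The first step is the reformulation. The curvilinear map $\Psi$ from \eqref{curvilin} represents every regular curve $C^1$-close to $\Lambda$ as $\sigma\mapsto \Psi(\sigma,\rho(\sigma))$, and the smallness conditions \eqref{small} controlled by $K_0,K_1$ ensure that this representation is a regular embedding on a $C^1$-neighbourhood of $\rho_0$. Substituting $f=\Psi(\cdot,\rho)$ into \eqref{0}--\eqref{3blow} yields a fourth-order quasilinear equation of the form $\partial_t \rho + a(\rho,\partial_\sigma \rho,\partial_\sigma^2\rho)\,\partial_\sigma^4 \rho = F(\rho,\partial_\sigma\rho,\partial_\sigma^2\rho,\partial_\sigma^3\rho)$ on $(0,T)\times I$, complemented at each endpoint by the angle condition, which reduces to the Neumann condition $\partial_\sigma \rho = 0$ thanks to the tangential correction $\cot\alpha\cdot\eta\,\tau_\Lambda$ built into $\Psi$, and by the no-flux condition, which becomes a nonlinear third-order condition $B(\rho)=0$. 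The compatibility hypothesis at $t=0$ is exactly \eqref{komp}.

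The second step is maximal regularity for the linearization at $\rho_0$. Freezing coefficients in $a$ and in the boundary operators produces a linear fourth-order parabolic problem on the bounded interval $I$ with one boundary operator of order $1$ and one of order $3$. Maximal $L_{2,\mu}$-regularity in $\mathbb{E}_\mu(T)$ follows from Denk--Hieber--Pr\"uss-type theory for constant-coefficient boundary value problems on the half line --- the Lopatinskii--Shapiro condition being verified by inspection for $\partial_\sigma^4$ with the boundary operators $\partial_\sigma$ and $\partial_\sigma^3$ --- transferred to weighted spaces via the Meyries--Schnaubelt extension and to the bounded interval by localization. The inverse of this linear problem is the operator $\mathcal{L}^{-1}$ appearing in the statement, and its norm controls the parameter $R_2$.

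The third step is a contraction mapping. I would rewrite the full quasilinear problem as $\rho = \mathcal{L}^{-1}\bigl(N(\rho),\mathcal{B}(\rho),\rho_0\bigr)$ with $N(\rho) := (a(\rho_0)-a(\rho))\partial_\sigma^4\rho + F(\rho)$ and $\mathcal{B}(\rho)$ collecting the nonlinear parts of the boundary operators. For $\mu > 7/8$, Proposition~\ref{unab0b} together with Lemma~\ref{unab1} gives $\mathbb{E}_\mu(T)\hookrightarrow C([0,T];C^1(\bar I))$, and Lemma~\ref{lem3} controls the reciprocals $1/|\partial_\sigma \Psi(\cdot,\rho)|$ that arise when translating arc-length into parameter derivatives. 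Standard multiplier and Nemytskii estimates in $W^s_{2,\mu}$ then show that $N$ and $\mathcal{B}$ are Lipschitz on $\mathbb{E}_\mu(T)$-balls around $\rho_0$ with Lipschitz constants tending to zero as $T\downarrow 0$, using that all embedding constants in Proposition~\ref{unab0b} and Lemma~\ref{unab1} are uniform in $T\in(0,T_0]$. The Banach fixed point theorem then produces a unique $\rho\in\mathbb{E}_\mu(T)$ for $T = T(\alpha,\Phi^*,\eta,R_1,R_2)$, and the preservation of the smallness conditions \eqref{small} for all $t\in[0,T]$ follows from the continuity of $t\mapsto\rho(t)$ in $W_2^{4(\mu-1/2)}(I)\hookrightarrow C^1(\bar I)$, ensuring that $\Psi(\cdot,\rho(t,\cdot))$ remains a regular parametrization. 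The main obstacle is the linear step: checking the Lopatinskii--Shapiro condition for the first/third order boundary pair and obtaining weighted maximal regularity with constants uniform in $T\in(0,T_0]$, since only this uniformity converts the embedding factors into the genuinely small contraction constants needed in Step 3.
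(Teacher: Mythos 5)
This paper does not prove Theorem~\ref{local}: it is quoted verbatim as the main result of the companion paper \cite{butzprint1}, so there is no in-paper proof to compare against. Your outline --- height-function reformulation via $\Psi$, weighted maximal $L_{2,\mu}$-regularity for the linearization (whose solution operator is exactly the $\mathcal{L}^{-1}$ with $\|\mathcal{L}^{-1}\|\leq R_2$ appearing in the statement), and a contraction argument yielding $T=T(\alpha,\Phi^*,\eta,R_1,R_2)$ --- is precisely the architecture the theorem statement presupposes and is the route taken in the cited source, so the proposal is correct in approach and essentially the same.
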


\begin{rem}
If follows directly that $[0,T) \times [0, 1] \ni (t, \sigma) \mapsto \Psi(\sigma, \rho(t, \sigma))$ is a strong solution of the curve diffusion flow with $\Phi(0, \cdot) = \Phi(\rho_0)$.
\end{rem}

\section{Construction of Reference Curves \label{const}} 

The idea of the proof is inspired by the argumentation for 
the approximation of $C^2$-hypersurfaces in Subsection 2.3 in \cite{pruess}.

\subsection{Generation of Potential Reference Curves}  \label{A} 

Let $f_0: \bar{I} \rightarrow \mathbb{R}^2$, $I:= (0, 1)$, be parametrized proportional to arc length and be in 
$W^{4(\mu - \nicefrac{1}{2})}_2(I; \mathbb{R}^2)$, $\mu \in \left(\nicefrac{7}{8}, 1 \right]$. Moreover, let $f_0$ fulfill the boundary conditions in \eqref{initialdatum}.

In order to apply the short time existence result, Theorem \ref{local}, to the curve $f_0$, we have to construct a suitable reference curve for $f_0$. More precisely, we look for a regular function $\Phi^*: [0, 1] \rightarrow \mathbb{R}^2$ of class $C^5$, such the boundary conditions in \eqref{bound1} are fulfilled.
We use the same notations as before. Later on, we will take care of the smallness condition of the corresponding initial height function, see \eqref{small} in Theorem \ref{local}.\\

We find a family of such curves by solving a parabolic equation subject to the previously mentioned boundary conditions \eqref{bound1} with the initial value $f_0$. The required regularity will be proven by the regularizing effects of parabolic equations. We consider
\begin{align}
\partial_t f - \partial_x^6 f &= 0 & \textrm{ for } & x \in (0, 1) \textrm{ and } t > 0, \nonumber \\
f &= f_0 & \textrm{ for } & x \in \{0, 1\}\textrm{ and } t > 0, \nonumber \\
\partial_x f &= \partial_x f_0 & \textrm{ for } & x \in \{0, 1\}\textrm{ and } t > 0, \nonumber \\
\partial_x^2 f &= 0 & \textrm{ for } & x \in \{0, 1\}\textrm{ and } t > 0, \nonumber \\
f_{|t=0} &= f_0 & \textrm{ for } & x \in (0, 1), \label{sys}
\end{align}
where the first order condition is a representation of the $\alpha$-angle condition and the second order one implies that $\vec{\kappa}[f(t)] = 0$ at the boundary for $t > 0$.

\begin{rem}
	In general dimension, it is not possible to use $f_0$ as boundary data by reasons of the regularity of the function. It works in this case, since the boundary consists only of two points, which means that the spatial regularity can be neglected.
\end{rem}

The following lemma provides a well-posedness result for \eqref{sys} with initial data $f_0$ of class
$W_2^{4 (\mu - \nicefrac{1}{2})}$. In order to solve a linear problem with optimal regularity, we will use Theorem 2.1 in \protect{\cite{meyries}} on maximal $L_p$-regularity with temporal weights. To this end, we use the notation
\begin{align*}
	X_{\tilde{\mu}, \mathbb{R}^2} &= W_2^{6\left(\tilde{\mu} - \nicefrac{1}{2}\right)} (I; \mathbb{R}^2) = W_2^{4\left(\mu - \nicefrac{1}{2}\right)} (I; \mathbb{R}^2) && \textrm{ for } \tilde{\mu}(\mu) = \tfrac{2}{3} \mu + \tfrac{1}{6} \in \left(\tfrac{3}{4}, \tfrac{5}{6}\right], \nonumber \\
	\mathbb{E}_{\tilde{\mu}, T, \mathbb{R}^2} &= W^1_{2, \tilde{\mu}} \big(J; L_{2} (I; \mathbb{R}^2)\big) \cap L_{2, \tilde{\mu}} \big(J; W^{6}_{2} (I; \mathbb{R}^2)\big) , \nonumber \\
	\mathbb{E}_{0, \tilde{\mu}, \mathbb{R}^2} &= L_{2, \tilde{\mu}} \big(J; L_{2} (I; \mathbb{R}^2)\big).
	\end{align*}
All the spaces are equipped with their natural norms. 

\begin{lem} \label{lemrefcurve}
	Let $J=(0, T)$ be finite and let $f_0: \bar{I} \rightarrow \mathbb{R}^2$, $I := (0, 1)$, be parametrized proportional to arc length and in $X_{\tilde{\mu}, \mathbb{R}^2}$ for $\mu \in \left(\frac78, 1 \right]$. Then the problem \eqref{sys} possesses a unique solution $f \in \mathbb{E}_{\tilde{\mu}, T, \mathbb{R}^2}$ with $\tilde{\mu}(\mu) = \frac{2}{3} \mu + \frac{1}{6} \in \left(\frac{3}{4}, \frac{5}{6}\right]$.
\end{lem}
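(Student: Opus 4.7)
The plan is to reduce the system \eqref{sys} to one with fully homogeneous boundary data by subtracting a polynomial extension of $f_0$, and then to invoke the maximal $L_2$-regularity theorem with temporal weights, Theorem~2.1 of \protect{\cite{meyries}}, for the sixth-order scalar parabolic operator $\partial_t-\partial_x^6$ applied componentwise.

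First I would construct the extension. Since $\mu\in(\tfrac{7}{8},1]$ we have $4(\mu-\tfrac{1}{2})\in(\tfrac{3}{2},2]$, so by Sobolev embedding $W_2^{4(\mu-1/2)}(I;\mathbb{R}^2)\hookrightarrow C^1(\bar{I};\mathbb{R}^2)$ the pointwise values $f_0(j)$ and $\partial_x f_0(j)$ at $j\in\{0,1\}$ are well-defined. Hermite interpolation then yields a unique polynomial $w\in C^\infty(\bar{I};\mathbb{R}^2)$ of degree at most five satisfying
\begin{align*}
w(j)=f_0(j),\qquad \partial_x w(j)=\partial_x f_0(j),\qquad \partial_x^2 w(j)=0\qquad\text{for }j=0,1.
\end{align*}
In particular $\partial_x^6 w\equiv 0$, and regarded as time-independent one has $w\in\mathbb{E}_{\tilde{\mu},T,\mathbb{R}^2}$ since $T<\infty$ and $\tilde{\mu}\in(\tfrac{3}{4},\tfrac{5}{6}]$. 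Setting $v:=f-w$ rewrites \eqref{sys} as the fully homogeneous boundary value problem
\begin{align*}
\partial_t v-\partial_x^6 v=0,\qquad v|_{\partial I}=\partial_x v|_{\partial I}=\partial_x^2 v|_{\partial I}=0,\qquad v|_{t=0}=f_0-w\in X_{\tilde{\mu},\mathbb{R}^2}.
\end{align*}
By the choice of $w$ the compatibility identities $(f_0-w)|_{\partial I}=0$ and $\partial_x(f_0-w)|_{\partial I}=0$ hold at $t=0$; no compatibility for $\partial_x^2$ is required because $4(\mu-\tfrac{1}{2})<\tfrac{5}{2}$, so $\partial_x^2(f_0-w)$ possesses no boundary trace in $X_{\tilde{\mu},\mathbb{R}^2}$.

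Next I would verify the hypotheses of Meyries' theorem. The scalar operator $\partial_t-\partial_x^6$ is parabolic since the principal symbol of $-\partial_x^6$ is $\xi^6>0$. The boundary operators $B_0=\mathrm{id}$, $B_1=\partial_x$, $B_2=\partial_x^2$ at each endpoint form a normal system of mutually distinct orders $0,1,2$, strictly less than $m=3$. For the Lopatinskii-Shapiro condition I would pass to the half-line ODE $\lambda u=\partial_x^6 u$ with $\mathrm{Re}\,\lambda\ge0$, $\lambda\ne0$: the three roots $\mu_1,\mu_2,\mu_3$ of $\mu^6=\lambda$ with $\mathrm{Re}\,\mu<0$ are pairwise distinct, hence the $3\times 3$ matrix $(\mu_k^{\,j})_{j,k=1,2,3}$ arising from $B_0,B_1,B_2$ acting on the admissible exponential modes $e^{\mu_k x}$ is a nonzero Vandermonde. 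An analogous calculation works at $x=1$. Together with constant coefficients and the weight condition $\tilde{\mu}>\tfrac{1}{2}$ all hypotheses of Theorem~2.1 in \protect{\cite{meyries}} are satisfied, so there exists a unique $v\in\mathbb{E}_{\tilde{\mu},T,\mathbb{R}^2}$ and $f:=v+w\in\mathbb{E}_{\tilde{\mu},T,\mathbb{R}^2}$ is the sought solution of \eqref{sys}. The main technical point is the clean verification of the Lopatinskii-Shapiro condition for these three specific boundary operators; once this is recorded, everything else is a routine application of the quoted reference.
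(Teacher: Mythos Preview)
Your proof is correct and follows the same route as the paper: both invoke Theorem~2.1 in \cite{meyries} for $m=3$, with the paper deferring the verification of the hypotheses to Lemma~6.1.2 in \cite{butzdiss} while you spell them out (homogenization via a degree-five Hermite interpolant and the Vandermonde check of the Lopatinskii--Shapiro condition). One cosmetic slip: the Vandermonde rows should be indexed by the orders $j=0,1,2$ of $B_0,B_1,B_2$, not $j=1,2,3$, though the resulting matrix is still nonsingular either way.
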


The claim follows by applying a maximal regularity result, Theorem 2.1 in \protect{\cite{meyries}}, for $m = 3$ and initial data of class $W_2^{6\left(\tilde{\mu} - \nicefrac{1}{2}\right)} (I; \mathbb{R}^2)$ with $\tilde{\mu} \in \left(\tfrac{3}{4}, \tfrac{5}{6}\right]$, cf.\ Lemma 6.1.2 in \protect{\cite{butzdiss}} for details. \\

Thus, the solution $f$ satisfies the desired boundary conditions \eqref{bound1} and is arbitrarily $W_2^{4 \left(\mu - \nicefrac{1}{2}\right)}(I; \mathbb{R}^2)$-close to the initial datum $f_0$, which follows by Lemma \ref{unab1}. It remains to show that $f(t,  \cdot)$, $t>0$, is smooth enough to use it as a reference curve.
\begin{lem} \label{regular}
	Let $f$ be the solution of problem \eqref{sys} given by Lemma \ref{lemrefcurve}.
	Then $f(t, \cdot) \in C^5(\bar{I}; \mathbb{R}^2)$ for all $t\in (0, T]$.
\end{lem}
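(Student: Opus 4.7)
The plan is to exploit the smoothing property of parabolic evolution equations: although $f_0$ only lies in $W^{4(\mu-1/2)}_2(I;\mathbb{R}^2)$, the solution $f(t,\cdot)$ of \eqref{sys} should become $C^\infty$-smooth for every $t>0$, which in particular yields $C^5$-regularity.

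I would first reduce \eqref{sys} to a problem with homogeneous boundary conditions. Since $4(\mu-\tfrac12)>\tfrac32$, the one-dimensional Sobolev embedding gives $f_0\in C^1(\bar I;\mathbb{R}^2)$, so the numbers $f_0(j)$ and $\partial_x f_0(j)$ for $j\in\{0,1\}$ are well defined. A short explicit construction (for instance a cubic Hermite interpolant multiplied by suitable cut-off functions localized near $x=0$ and $x=1$) yields a function $\phi\in C^\infty(\bar I;\mathbb{R}^2)$ with $\phi(j)=f_0(j)$, $\partial_x\phi(j)=\partial_x f_0(j)$, and $\partial_x^2\phi(j)=0$ for $j\in\{0,1\}$. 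Setting $\tilde f:=f-\phi$, the function $\tilde f$ then solves
\begin{align*}
\partial_t\tilde f-\partial_x^6\tilde f=\partial_x^6\phi=:F\qquad\text{on }I\times(0,T),
\end{align*}
subject to the homogeneous boundary conditions $\tilde f=\partial_x\tilde f=\partial_x^2\tilde f=0$ at $\partial I$, with a time-independent right-hand side $F\in C^\infty(\bar I;\mathbb{R}^2)\subset L_2(I;\mathbb{R}^2)$ and initial datum $\tilde f_0:=f_0-\phi\in L_2(I;\mathbb{R}^2)$.

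Next I would consider the realization $A$ of $\partial_x^6$ on $L_2(I;\mathbb{R}^2)$ with domain $D(A)=\{u\in W^6_2(I;\mathbb{R}^2): u=\partial_x u=\partial_x^2 u=0 \text{ on } \partial I\}$. Three integrations by parts give $\langle Au,u\rangle_{L_2}=-\int_I|\partial_x^3 u|^2\,\dd x\leq 0$, so $A$ is dissipative, and the standard theory of higher-order parabolic boundary value problems (equivalently, the maximal-regularity result from \cite{meyries} already used in the proof of Lemma \ref{lemrefcurve} specialized to this autonomous linear operator) shows that $A$ generates an analytic $C_0$-semigroup $(e^{tA})_{t\ge 0}$ on $L_2(I;\mathbb{R}^2)$. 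By uniqueness of solutions to the linear problem in the class $\mathbb{E}_{\tilde\mu,T,\mathbb{R}^2}$, the function $\tilde f$ must coincide with the mild representation
\begin{align*}
\tilde f(t)=e^{tA}\tilde f_0+\int_0^t e^{(t-s)A}F\,\dd s,\qquad t\in[0,T].
\end{align*}

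Finally I would invoke the smoothing property of the analytic semigroup. For every $t>0$ we have $e^{tA}\tilde f_0\in D(A^k)$ for all $k\in\mathbb{N}$, while the identity $A\int_0^t e^{(t-s)A}F\,\dd s=e^{tA}F-F\in L_2(I;\mathbb{R}^2)$ shows that the Duhamel term already lies in $D(A)$. Hence $\tilde f(t,\cdot)\in D(A)\subset W^6_2(I;\mathbb{R}^2)$, and the one-dimensional Sobolev embedding $W^6_2(I)\hookrightarrow C^{5,1/2}(\bar I)$ then yields $\tilde f(t,\cdot)\in C^5(\bar I;\mathbb{R}^2)$, and therefore $f(t,\cdot)=\tilde f(t,\cdot)+\phi\in C^5(\bar I;\mathbb{R}^2)$ for every $t\in(0,T]$, which is the claim. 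The main obstacle is verifying that $A$ indeed generates an analytic semigroup on $L_2(I;\mathbb{R}^2)$ (equivalently, that the sixth-order problem satisfies the Lopatinskii--Shapiro condition) and checking carefully that the $\mathbb{E}_{\tilde\mu,T,\mathbb{R}^2}$-solution produced by Lemma \ref{lemrefcurve} agrees with the mild solution given by Duhamel's formula; once these two ingredients are in place, the smoothing step is automatic.
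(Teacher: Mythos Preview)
Your proof is correct and follows essentially the same route as the paper: subtract a smooth function $\xi$ (your $\phi$) matching the boundary data, rewrite the resulting homogeneous problem as an abstract Cauchy problem for the operator $-\partial_x^6$ on $L_2(I;\mathbb{R}^2)$ with domain $D(A)=\{u\in W^6_2: u=\partial_x u=\partial_x^2 u=0 \text{ on }\partial I\}$, invoke analyticity of the generated semigroup, and read off $C^5$-regularity from the mild-solution formula via $D(A)\hookrightarrow C^5(\bar I;\mathbb{R}^2)$. The paper additionally records that $v\in C^\infty\big((0,T];C^5(\bar I;\mathbb{R}^2)\big)$ and identifies the solutions by first showing the mild solution lies in $\mathbb{E}_{\tilde\mu,T,\mathbb{R}^2}$ rather than the reverse direction you take, but for the stated lemma your argument is sufficient and the two obstacles you flag (analytic generation and identification with the mild solution) are exactly the points the paper outsources to \cite{butzdiss}.
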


\begin{proof}
	The proof is split into the following parts: We find a homogeneous problem equivalent to \eqref{sys} and use the regularization effects, which follow by the fact that the corresponding operator generates an analytic semigroup. Then we show that the regularity transfers to the solution of the original problem. \\
	
	First, we choose a function $\xi \in C^{\infty}(\bar{I}; \mathbb{R}^2)$ fulfilling
	\begin{align}
	\label{xi}
	\begin{rcases}
	\xi(x) &= f_0(x) \\
	\partial_x \xi(x) &= \partial_x f_0(x) \\
	\partial^2_x \xi(x) &= 0
	\end{rcases} && \textrm{ for } x \in \{0, 1\}
	\end{align}
	and set $u_0 := f_0 - \xi$ and $h := \partial_x^6 \xi$. If $f \in \mathbb{E}_{\tilde{\mu}, T, \mathbb{R}^2}$ is the unique solution of \eqref{sys}, then the function 
$v:= f - \xi \in \mathbb{E}_{\tilde{\mu}, T, \mathbb{R}^2}$ solves
	\begin{align}
	\partial_t v - \partial_x^6 v &= \partial_x^6 \xi & \textrm{ for } & x \in (0, 1) \textrm{ and } t > 0, \nonumber \\
	v= \partial_x v =\partial_x^2 v &= 0 & \textrm{ for } & x \in \{0, 1\}\textrm{ and } t > 0, \nonumber \\
	v_{|t=0} &= u_0 & \textrm{ for } & x \in (0, 1), \label{sys1}
	\end{align}
	and problem \eqref{sys1} can be written as the abstract Cauchy problem 
	\begin{align}
	\partial_t u(t) + A u(t) &= h(t) & \textrm{ for } t \in (0, T), \nonumber \\
	u_{|t=0} &= u_0, \label{CP}
	\end{align}
	with $A = - \partial_x^6: D(A) \rightarrow X$, $D(A) := \{u \in W_2^6(I; \mathbb{R}^2) | \; u_{|\partial I} = 0, \partial_x  u_{|\partial I} = 0, \partial_x^2 u_{|\partial I} = 0 \}$ and $X := L_2(I; \mathbb{R}^2)$. It is a well-known result, that $-A$ is the generator of an analytic $C_0$-semigroup, see Claim 6.1.5 in \cite{butzdiss} for the proof. Now we can show higher regularity for the solution of the homogeneous problem.
	\begin{claim} \label{mildsol}
		$v \in C^{\infty}\left((0, T]; C^5(\bar{I}; \mathbb{R}^2)\right)$.
	\end{claim}
	
	\begin{proof} [Proof of the claim:]
		For an $h \in X$, which is an element of $L_1 (J; X) \cap C(\bar{J}; X)$ for $J$ bounded, and a $u_0 \in X$, we consider the mild solution of \eqref{CP}, cf.\ Definition 4.1.4 in \cite{lunardi}, given by
		\begin{align*}
		u(t) = e^{-tA}u_0 + \int_0^{t} e^{-(t-s)A} h(s) \dd s = e^{-tA}u_0 + \int_0^{t} e^{-sA} h \dd s,
		\end{align*}
		where we used that $h$ does not depend on time.
		Using the basic properties of the analytic semigroup generated by $-A$, see Chapter 2.1 in \protect{\cite{lunardi}}, we conclude that for an $x \in X$ the mapping $[t \mapsto e^{-tA}x] \in C^{\infty}((0, T], D(A))$ for $T < \infty$. Combining this with the theorem on parameter integrals for separable Banach spaces, see Theorem 3.17 in \protect{\cite{escher}}, it holds
		\begin{align*}
		u \in C^{\infty}\big((0, T]; D(A)\big) \hookrightarrow C^{\infty}\left((0, T]; C^5\big(\bar{I}; \mathbb{R}^2\big)\right).
		\end{align*}
		This proves the claim. 
	\end{proof}
	
	It follows by direct calculations that $\tilde{f} := v + \xi \in \mathbb{E}_{\tilde{\mu}, T, \mathbb{R}^2}$ solves the original problem \eqref{sys} and that the function is in $C^{\infty}((0, T]; C^5(\bar{I}; \mathbb{R}^2))$. Due to the fact that the solution is unique, we obtain $f(t) \in C^5(\bar{I}; \mathbb{R}^2)$ for $t\in (0, T]$.
\end{proof}

\subsection{Characterization of Reference Curves \label{param}}

This section aims to establish certain conditions which enable us to prove that the smoothed versions of the initial curve $f_{\epsilon} := f(\epsilon, \cdot )$,  $\epsilon>0$, see Chapter \ref{A}, are suitable to use as reference curves in the short time existence result, Theorem \ref{local}. A good starting point is the formulation of conditions for the admissible initial curves for a fixed reference curve denoted by $\Phi^*$. Again, we use the curvilinear coordinates given in \eqref{curvilin}. 
We begin by specifying the requested properties of a reference curve.
\begin{definition} [Reference Curve $\Phi^*$ for the Initial Curve $f_0$] \label{refcurvedef}
	Let $\alpha \in (0, \pi)$ be fixed. Furthermore, let $\Phi^* : [0, 1] \rightarrow \mathbb{R}^2$ be a regular $C^5$-curve parametrized proportional to arc length, fulfilling the boundary conditions \eqref{bound1} and let $f_0: [0, 1] \rightarrow \mathbb{R}^2$ be a regular $W^{\beta}_2$-curve, $\beta \in \left(\frac{3}{2}, 2\right]$, fulfilling the boundary conditions \eqref{initialdatum}.
	We call \textbf{$\Phi^*$ a reference curve for the initial curve $f_0$}, if the following conditions hold true:
	\begin{enumerate}
		\item \label{condi1} There exists a regular $C^1$-reparametrization $\varphi: [0, 1] \rightarrow [0,1]$ and a function $\rho: [0, 1] \rightarrow (-d, d)$ of class $W^{\beta}_2$, such that 
		\begin{align}
		f_0(\varphi(\sigma)) = \Phi^*(\sigma) + \rho(\sigma) (n_{\Lambda}(\sigma) + {\cot{\alpha}} \eta(\sigma) \tau_{\Lambda} (\sigma)) & \textrm{ for all } \sigma \in [0, 1].
		\label{para}
		\end{align}
		\item \label{condi2} The function $\rho$ satisfies the bounds \eqref{small}, i.e.\
		\begin{align*}
		\|\rho \|_{C([0, 1])} &< \frac{1}{6 \|\kappa_{\Lambda}\|_{C(\bar{I})} \left( 1 + (\cot \alpha)^2 + \hat{C}|\cot{\alpha}| \|\eta' \|_{C([0, 1])}  \right)} = \frac{K_0(\alpha, \Phi^*)}{3}, \\
		\intertext{and in the case $\alpha \neq \frac{\pi}{2}$ additionally}
		\|\partial_\sigma \rho \|_{C([0, 1])} &< \frac{\mathcal{L}[\Phi^*]}{36 |\cot{\alpha}|} = \frac{K_1(\alpha, \Phi^*)}{3}.
		\end{align*}
		\end{enumerate}
\end{definition}

This enables us to state the main result of this section.
\begin{satz}\label{refcurve}
	Let $\alpha \in (0, \pi)$ be fixed. Furthermore, let $\Phi^* : [0, 1] \rightarrow \mathbb{R}^2$ be a regular $C^5$-curve parametrized proportional to arc length and let it fulfill the boundary conditions \eqref{bound1}. Let $f_0: [0, 1] \rightarrow \mathbb{R}^2$ be a regular $W^{\beta}_2$-curve, $\beta \in \left(\frac{3}{2}, 2\right]$, fulfilling the boundary conditions \eqref{initialdatum}.
	Moreover, let $\lambda \in (0,1)$ be given such that the conditions
	\footnotesize
	\begin{align}
	\hspace{-0,5cm}\lambda C_{\alpha} \sqrt{1 + (\cot \alpha)^2} & < \min \left\{\sin \left(\tfrac{\sqrt{(\cot \alpha)^2 + 1} - |\cot \alpha|}{4} \right),\;
	\left\{\begin{array}{c}
	\begin{aligned}
	&\sin \left( \tfrac{1}{2(4 \cdot 144)^2 (\cot \alpha)^2} \right) & \textrm{ for } \alpha \neq \tfrac{\pi}{2} \\
	& 1 &\textrm{ for } \alpha = \tfrac{\pi}{2} 
	\end{aligned}
	\end{array} \right\} \right\} \label{lam} \\
	\lambda & < \min \left\{\frac{1}{6 \sqrt{1 + (\cot \alpha)^2}}, \;
	\left\{\begin{array}{c}
	\left.\begin{aligned}
	& \tfrac{1}{144 |\cot \alpha|} &\textrm{ for } \alpha \neq \tfrac{\pi}{2} \\
	& 1 &\textrm{ for } \alpha = \tfrac{\pi}{2} 
	\end{aligned}\right\}
	\end{array}\right. \hspace{-0,25cm} \right\}
	\label{lambdasmall}
	\end{align}
	\normalsize
	hold true.
	If $f_0 \in B^{C^0}_{\xi_0}(\Phi^*)$ and $\partial_{\sigma} f_0 \in B^{C^0}_{\xi_1}(\partial_{\sigma} \Phi^*)$, for 
	\begin{align*}
	\xi_0 &= \min \left\{\overline{C_{\alpha}(\lambda)}, \frac{(\sin \alpha)^2}{2} \right\} \frac{1}{\| \kappa_{\Lambda} \|_{C([0, 1])}}, \\
	\xi_1 &= \min \left\{\frac{\sqrt{(\cot \alpha)^2 +1} - |\cot \alpha|}{4}, 
	\left\{\begin{array}{c}
	\left.\begin{aligned}
	&\tfrac{1}{2(4 \cdot 144)^2 |\cot \alpha|^2} &\textrm{ for } \alpha \neq \tfrac{\pi}{2} \\
	& 1 &\textrm{ for } \alpha = \tfrac{\pi}{2} 
	\end{aligned}\right\}
	\end{array}\right. \hspace{-0,25cm} , \frac{\sin \alpha}{2} \right\} \mathcal{L}[\Phi^*], 
	\end{align*}
	where
	\begin{align*}
	\overline{C_\alpha (\lambda)} &:= 
	\left(1 - \sqrt{(\lambda C_{\alpha} \cot \alpha)^2 + (1 - \lambda C_{\alpha})^2}\right) \in (0, \lambda]
	\end{align*}
	for $C_{\alpha}$ given by
	\begin{align*}
	C_{\alpha} := 
	{\left[1 + (\cot{\alpha})^2 +  {\hat{C} |\cot{\alpha}}| \|\eta' \|_{C([0, 1])} \right]^{-1}} \in (0, 1]
	\end{align*}
	and $\hat{C} := (\sqrt{2}\sin \alpha)^{-1} > 0$, cf.\ Lemma \ref{kappabound}.
	Then $\Phi^*$ is a reference curve for the initial curve $f_0$. Moreover, there is a constant $C\left(\alpha, \Phi^*, \eta, \|f_0\|_{W_2^{\beta}((0, 1); \mathbb{R}^2)}\right)$ such that $\|\rho\|_{W_2^{\beta}((0, 1))} \leq C$.
\end{satz}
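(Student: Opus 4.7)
The plan is to verify the two conditions of Definition \ref{refcurvedef}, namely the existence of a $C^1$-reparametrization $\varphi$ and a $W_2^\beta$-height function $\rho$ realizing \eqref{para} together with the pointwise bounds on $\rho$ and $\partial_\sigma \rho$, and then to derive the $W_2^\beta$-estimate on $\rho$ by a composition argument.

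First I analyze the curvilinear map $\Psi$ from \eqref{curvilin}. At $q = 0$ one computes
\begin{align*}
\det D\Psi(\sigma, 0) = \det\bigl(\partial_\sigma\Phi^*(\sigma),\, n_\Lambda(\sigma) + \cot\alpha\,\eta(\sigma)\tau_\Lambda(\sigma)\bigr) = \mathcal{L}[\Phi^*] > 0,
\end{align*}
so the $C^5$-regularity of $\Phi^*$ keeps the Jacobian uniformly bounded below on $[0,1]\times(-d,d)$ for $d$ small, and $\Psi$ is a $C^4$-diffeomorphism onto its image $U$, with inverse $\Psi^{-1} = (\Sigma, Q) \in C^4(U)$. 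Furthermore, the design of $\eta$ forces $\Psi(\sigma, q) \in \mathbb{R}\times\{0\}$ for all $|q| < d$ when $\sigma \in \{0,1\}$. Since $\|f_0 - \Phi^*\|_{C([0,1])} \le \xi_0$ is smaller than the tubular-neighborhood width, $f_0([0,1]) \subset U$, and I may set $\psi := \Sigma\circ f_0 : [0,1]\to[0,1]$. From $\psi'(x) = D\Sigma(f_0(x))\,\partial_x f_0(x)$ together with $\|\partial_x f_0 - \partial_\sigma\Phi^*\|_{C([0,1])} \le \xi_1$, the smallness of $\xi_1$ keeps $\psi'(x) > 0$ on $[0,1]$, so $\psi$ is a $C^1$-diffeomorphism; the boundary conditions \eqref{initialdatum}, \eqref{bound1} combined with the boundary behaviour of $\Psi$ just mentioned guarantee $\psi(\{0,1\}) = \{0,1\}$. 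Then $\varphi := \psi^{-1}$ and $\rho(\sigma) := Q(f_0(\varphi(\sigma)))$ realize \eqref{para} by construction.

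For the pointwise bounds, I express $\rho$ as the $q$-component of $\Psi^{-1}(f_0\circ\varphi)$. The constant $C_\alpha$ is a lower bound on the $q$-direction part of $D\Psi^{-1}$, whence
\begin{align*}
\|\rho\|_{C([0,1])} \le C_\alpha \|f_0\circ\varphi - \Phi^*\|_{C([0,1])} \le C_\alpha \xi_0,
\end{align*}
and the explicit form of $\xi_0$ via $\overline{C_\alpha(\lambda)}\|\kappa_\Lambda\|^{-1}_{C([0,1])}$ yields $\|\rho\|_{C([0,1])} \le K_0/3$. Differentiating \eqref{para} and solving algebraically for $\partial_\sigma\rho$, then using the hypothesis on $\partial_\sigma f_0 - \partial_\sigma\Phi^*$ together with the Frenet relation $\partial_\sigma n_\Lambda = -\mathcal{L}[\Phi^*]\kappa_\Lambda\tau_\Lambda$ and the just-established bound on $\rho$, yields $\|\partial_\sigma\rho\|_{C([0,1])} \le K_1/3$. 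The intricate trigonometric thresholds \eqref{lam}, \eqref{lambdasmall} are engineered precisely to close these estimates.

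Finally, the $W_2^\beta$-regularity of $\rho$ follows from $\rho = Q\circ f_0\circ\varphi$ together with the embedding $W_2^\beta(I)\hookrightarrow C^1(\bar I)$ (since $\beta > 3/2$): $\Sigma\circ f_0 \in W_2^\beta$ and its inverse $\varphi$ also lies in $W_2^\beta$ by the inverse-function theorem for Sobolev diffeomorphisms with $\psi'$ bounded away from zero, and $Q\circ f_0\circ\varphi \in W_2^\beta$ by the standard composition rules for $W_2^\beta$ in this supercritical range. Tracking the chain-rule constants produces the bound $\|\rho\|_{W_2^\beta} \le C(\alpha, \Phi^*, \eta, \|f_0\|_{W_2^\beta})$. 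The main obstacle is the global bijectivity of $\psi$ on all of $[0,1]$: local injectivity near $q=0$ is immediate, but globally—in particular preserving the boundary points $\{0,1\}$—forces the delicate smallness thresholds \eqref{lam}, \eqref{lambdasmall} and the matching boundary angles of $f_0$ and $\Phi^*$ to become indispensable.
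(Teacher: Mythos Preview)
Your overall architecture---invert $\Psi$, define $\psi=\Sigma\circ f_0$, $\varphi=\psi^{-1}$, $\rho=Q\circ f_0\circ\varphi$, then verify the $C^0$/$C^1$ bounds and finally the $W_2^\beta$ estimate---matches the paper's strategy. However, two of your steps hide the real work and, as written, are genuine gaps.

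\textbf{The containment $f_0([0,1])\subset U$ is not a consequence of $\|f_0-\Phi^*\|_{C^0}\le\xi_0$ alone.} The image $\Psi([0,1]\times(-\lambda d,\lambda d))$ is \emph{not} a full $C^0$-tube around $\Phi^*([0,1])$: near the endpoints $\sigma=0,1$ the $C^0$-ball around $\Phi^*$ extends into the lower half-plane, outside the range of $\Psi$ (cf.\ the ``grey areas'' in the paper's Figure~\ref{grey}). The paper devotes Lemma~\ref{refcurve2} entirely to this issue, using the $\xi_1$-bound on $\partial_\sigma f_0-\partial_\sigma\Phi^*$ together with the angle condition at the boundary to show $[f_0(\sigma)]_2>0$ on $(0,x)\cup(y,1)$; only then does containment follow. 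Your one-line dismissal (``smaller than the tubular-neighborhood width'') does not address this, and the $\xi_1$-hypothesis plays no role in your containment argument.

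\textbf{$\Psi$ is only a \emph{local} diffeomorphism.} Lemma~\ref{diffeo} gives $|D\Psi|>0$ on $[0,1]\times(-\lambda d,\lambda d)$, but nothing rules out self-intersections of the image, so a global $C^4$-inverse $(\Sigma,Q)$ need not exist. The paper handles this by applying the implicit function theorem locally (after extending $\Phi^*$ and $\eta$ slightly past $[0,1]$) and then gluing; the heart of the argument is Claim~\ref{nummer4}, which proves that $g_1$ is injective via the condition $I+II<\sqrt{(\cot\alpha)^2+1}-|\cot\alpha|$. You acknowledge global bijectivity as ``the main obstacle'' at the end, but you never actually establish it.

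A related problem appears in your bound $\|\rho\|_{C^0}\le C_\alpha\|f_0\circ\varphi-\Phi^*\|_{C^0}\le C_\alpha\xi_0$: the second inequality is unjustified, since $\|f_0\circ\varphi-\Phi^*\|_{C^0}$ is \emph{not} controlled by $\|f_0-\Phi^*\|_{C^0}$ without first controlling the reparametrization $\varphi$. The paper instead uses the containment condition to get $|f_0(\tilde\sigma)-\Phi^*(g_1(\tilde\sigma))|<\lambda d\sqrt{1+(\cot\alpha)^2}$ (estimate \eqref{nummer8}), which is the correct route to $\|\rho\|_{C^0}<K_0/3$.
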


The first step to prove this theorem, is to show that $\Psi$ is a local diffeomorphism in a suitably small neighborhood of $\Phi^*$.
\begin{lem} \label{diffeo}
	Let $\lambda \in (0, 1]$ and $d$ be given by
	\begin{align} 
	d := \frac{C_{\alpha}}{\|\kappa_{\Lambda}\|_{C([0, 1])}},
	\label{defd}
	\end{align}
	where $C_{\alpha}$ is given as in Theorem \ref{refcurve}.
	Then $\Psi$ is a local diffeomorphism on $[0, 1] \times (-\lambda d, \lambda d)$ with
	\begin{align*}
	|D \Psi| ({\sigma}, q) > (1-\lambda)\mathcal{L}[\Phi^*] && \textrm{ for } ({\sigma}, q) \in [0, 1] \times (-\lambda d, \lambda d).
	\end{align*}
\end{lem}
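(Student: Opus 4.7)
The plan is to compute $D\Psi$ explicitly in the moving orthonormal frame $(\tau_\Lambda, n_\Lambda)$ along $\Phi^*$, evaluate its determinant, and show that the error terms are exactly what the constants $C_\alpha$ and $d$ were engineered to absorb. The inverse function theorem will then supply the local diffeomorphism property.

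First I would use that $\Phi^*$ is parametrized proportional to arc length to record the Frenet-type identities $\partial_\sigma \tau_\Lambda = \mathcal{L}[\Phi^*]\,\kappa_\Lambda\, n_\Lambda$ and $\partial_\sigma n_\Lambda = -\mathcal{L}[\Phi^*]\,\kappa_\Lambda\, \tau_\Lambda$, where $\kappa_\Lambda$ denotes the scalar curvature of $\Lambda$ (so that $\vec\kappa_\Lambda = \kappa_\Lambda n_\Lambda$). Substituting these into the partial derivatives of $\Psi$ read off from \eqref{curvilin} and expanding in the basis $(\tau_\Lambda, n_\Lambda)$, the $\tau_\Lambda$-component of $\partial_\sigma \Psi$ is $\mathcal{L}[\Phi^*](1 - q\kappa_\Lambda) + q\cot\alpha\,\eta'$, the $n_\Lambda$-component is $q\,\mathcal{L}[\Phi^*]\kappa_\Lambda\cot\alpha\,\eta$, and $\partial_q \Psi = \cot\alpha\,\eta\,\tau_\Lambda + n_\Lambda$. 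A two-by-two determinant then yields the clean closed form
\[
|D\Psi|(\sigma,q) = \bigl|\,\mathcal{L}[\Phi^*]\bigl(1 - q\kappa_\Lambda(1+\cot^2\alpha\,\eta^2)\bigr) + q\cot\alpha\,\eta'\,\bigr|.
\]

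Next I would estimate the perturbation via the triangle inequality, using $|\eta|\le 1$ and the hypothesis $|q| < \lambda d = \lambda C_\alpha/\|\kappa_\Lambda\|_{C([0,1])}$. The first correction is bounded by $\lambda C_\alpha \mathcal{L}[\Phi^*](1+\cot^2\alpha)$; the second by $\lambda C_\alpha|\cot\alpha|\,\|\eta'\|_{C([0,1])}/\|\kappa_\Lambda\|_{C([0,1])}$. The crucial move is to bring the factor $1/\|\kappa_\Lambda\|_{C([0,1])}$ to scale with $\mathcal{L}[\Phi^*]$ via Lemma~\ref{kappabound}, which gives $1/\|\kappa_\Lambda\|_{C([0,1])} \le \hat{C}\,\mathcal{L}[\Phi^*]$. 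Summing the two bounds, the definition of $C_\alpha$ causes the product $\lambda C_\alpha \cdot C_\alpha^{-1}$ to collapse, leaving a total perturbation strictly less than $\lambda \mathcal{L}[\Phi^*]$. Hence $|D\Psi|(\sigma,q) > (1-\lambda)\mathcal{L}[\Phi^*]$, and in particular $|D\Psi|$ is strictly positive on $[0,1]\times(-\lambda d,\lambda d)$, so the inverse function theorem delivers the local diffeomorphism.

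The main obstacle is purely the bookkeeping of constants: verifying that after invoking Lemma~\ref{kappabound} the two error contributions assemble into exactly the reciprocal of $C_\alpha = [1+\cot^2\alpha + \hat{C}|\cot\alpha|\,\|\eta'\|_{C([0,1])}]^{-1}$. Nothing deeper is involved, since once the Frenet formulas are plugged in, the computation is forced by the definitions.
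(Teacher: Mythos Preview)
Your proposal is correct and follows exactly the approach the paper indicates: compute the partial derivatives of $\Psi$ in the Frenet frame, take the determinant, and use Lemma~\ref{kappabound} to convert the factor $1/\|\kappa_\Lambda\|_{C([0,1])}$ into $\hat{C}\,\mathcal{L}[\Phi^*]$ so that the perturbation collapses against $C_\alpha^{-1}$. The paper merely states that the proof ``follows easily by calculating the derivatives of $\Psi$ and using Lemma~\ref{kappabound}'', and your write-up supplies precisely those calculations.
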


The proof follows easily by calculating the derivatives of $\Psi$ and using Lemma \ref{kappabound}. \\

Now, the first criterion for reference curves can be formulated.
\begin{lem} \label{refcurve1}
	Let $\alpha \in (0, \pi)$ be fixed. Furthermore, let $\Phi^* : [0, 1] \rightarrow \mathbb{R}^2$ be a regular $C^5$-curve parametrized proportional to arc length and let it fulfill the boundary conditions \eqref{bound1}. Let $f_0: [0, 1] \rightarrow \mathbb{R}^2$ be a regular $W^{\beta}_2$-curve, $\beta \in \left(\frac{3}{2}, 2\right]$, fulfilling the boundary conditions \eqref{initialdatum}.
	Moreover, let the following conditions hold true for a $\lambda \in (0, 1)$ fulfilling the assumptions \eqref{lam} and \eqref{lambdasmall}:
	\begin{enumerate}
		\item \label{(1)} The initial curve $\Gamma_0 := f_0 (\bar{I})$ is contained in $\Psi \left([0, 1] \times (- \lambda d, \lambda d)\right)$. 
		\item \label{(2)} The curves $f_0$ and $\Phi^*$ fulfill the conditions
		\footnotesize
		\begin{align*}
		|f_0 (\sigma) - \Phi^* (\sigma) | & < \frac{\overline{C_{\alpha} (\lambda)}}{\| \kappa_{\Lambda} \|_{C([0, 1])}},\\
		|\partial_{\sigma} f_0 (\sigma) - \partial_{\sigma} \Phi^* (\sigma) | & < \min \left\{\tfrac{\sqrt{(\cot \alpha)^2 +1} - |\cot \alpha|}{4} , \left\{\begin{array}{c}
		\left.\begin{aligned}
		&\tfrac{1}{2(4 \cdot 144)^2 |\cot \alpha|^2} &\textrm{ for } \alpha \neq \tfrac{\pi}{2} \\
		& 1 &\textrm{ for } \alpha = \tfrac{\pi}{2} 
		\end{aligned}\right\}
		\end{array}\right. \hspace{-0,25cm} \right\} \mathcal{L}[\Phi^*]
		%\label{cond2.0}
		\end{align*} 
	for all $\sigma \in [0, 1]$.
	\end{enumerate}
\normalsize
	Then $\Phi^*$ is a reference curve for the initial curve $f_0$.
\end{lem}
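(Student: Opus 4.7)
The plan is to construct the reparametrization $\varphi$ and the height function $\rho$ required by Definition \ref{refcurvedef} by locally inverting $\Psi$ along $f_0$, then to verify the two quantitative bounds by combining assumption (\ref{(1)}) with a careful pointwise analysis.

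First, I would use Lemma \ref{diffeo} together with hypothesis (\ref{(1)}): $\Gamma_0 \subset \Psi([0,1] \times (-\lambda d, \lambda d))$, where $\Psi$ is a local diffeomorphism with Jacobian bounded below by $(1-\lambda)\mathcal{L}[\Phi^*]$. Thus, in a neighborhood of every $x \in [0,1]$ the composition $(\sigma,q) := \Psi^{-1}\circ f_0$ is well defined. To globalize it, I would apply the implicit function theorem to
\begin{equation*}
F(x,\sigma,q) := f_0(x) - \Psi(\sigma, q),
\end{equation*}
starting at $x=0$, where the matching boundary conditions \eqref{initialdatum} and \eqref{bound1} give $f_0(0) = \Phi^*(0) = \Psi(0,0)$ and hence $\sigma(0)=0$, $q(0)=0$. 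The $(\sigma,q)$-Jacobian is invertible on $(-\lambda d, \lambda d)$ by Lemma \ref{diffeo}, and $\sigma'(x)$ can be kept uniformly positive on $[0,1]$ via the derivative bound in (\ref{(2)}): the identity $\det(\tau_\Lambda, n_\Lambda + \cot\alpha\,\eta\,\tau_\Lambda) = 1$ provides a quantitative baseline, and the threshold on $|\partial_\sigma f_0-\partial_\sigma \Phi^*|$ is calibrated so that $\partial_x f_0$ remains transverse to $\partial_q\Psi$. Together with the matching boundary value at $x=1$, this produces a $C^1$-diffeomorphism $\sigma:[0,1]\to[0,1]$; setting $\varphi := \sigma^{-1}$ and $\rho := q\circ\varphi$ gives part (\ref{condi1}) of Definition \ref{refcurvedef}. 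Since $\Psi$ is $C^5$ and $f_0 \in W^\beta_2 \hookrightarrow C^1$ for $\beta\in(3/2,2]$, the composition and Sobolev superposition rules yield $\rho \in W^\beta_2$ together with the quantitative estimate $\|\rho\|_{W^\beta_2} \leq C(\alpha, \Phi^*, \eta, \|f_0\|_{W^\beta_2})$ claimed at the end of the theorem.

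The $C^0$-bound on $\rho$ is then immediate from assumption (\ref{(1)}) alone: $|\rho(\sigma)| < \lambda d = \lambda C_\alpha / \|\kappa_\Lambda\|_{C(\bar I)}$ and the bound \eqref{lambdasmall} forces $\lambda < 1/6$, so $\|\rho\|_{C([0,1])} < d/6 = K_0/3$. The more delicate task is bounding $\|\partial_\sigma \rho\|_{C^0}$. I would differentiate $f_0(\varphi(\sigma)) = \Psi(\sigma, \rho(\sigma))$ and decompose both sides in the orthonormal frame $(\tau_\Lambda(\sigma), n_\Lambda(\sigma))$, using the Frenet-type relations $\partial_\sigma\tau_\Lambda = \mathcal{L}[\Phi^*]\kappa_\Lambda n_\Lambda$ and $\partial_\sigma n_\Lambda = -\mathcal{L}[\Phi^*]\kappa_\Lambda \tau_\Lambda$. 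The resulting $2\times 2$ linear system gives $\rho'(\sigma)$ and $\varphi'(\sigma)$ as explicit rational functions of $\partial_x f_0(\varphi(\sigma))$, $\rho(\sigma)$, and the geometric data $\kappa_\Lambda,\eta,\eta',\cot\alpha,\mathcal{L}[\Phi^*]$. Writing $\partial_x f_0 = \mathcal{L}[\Phi^*]\tau_\Lambda + (\partial_x f_0 - \partial_x \Phi^*)$ and inserting the pointwise bound on $|\partial_\sigma f_0 - \partial_\sigma\Phi^*|$ from (\ref{(2)}) together with $|\rho|<\lambda d$, the thresholds in \eqref{lam} and \eqref{lambdasmall} turn out to be exactly those needed to force $\|\partial_\sigma\rho\|_{C^0} < K_1/3$ when $\alpha\neq \pi/2$. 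In the orthogonal case $\alpha=\pi/2$ the tangential summand in \eqref{curvilin} vanishes, no condition on $\partial_\sigma\rho$ is required in Definition \ref{refcurvedef}, and this step is vacuous.

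The main obstacle will be the final step: the explicit propagation of the calibrated constants $C_\alpha$, $\overline{C_\alpha(\lambda)}$ and the thresholds on $|f_0-\Phi^*|$, $|\partial_\sigma f_0 - \partial_\sigma \Phi^*|$ through the $2\times 2$ inversion governing $\rho'$, including the absorption of the $\|\eta'\|_{C^0}$ and $|\cot\alpha|$ contributions. Once this bookkeeping is done, the conclusion — that $\Phi^*$ satisfies Definition \ref{refcurvedef} as a reference curve for $f_0$ — assembles routinely from the pieces above.
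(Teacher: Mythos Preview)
Your overall plan matches the paper's: invert $\Psi$ along $f_0$ via the implicit function theorem to produce $(\varphi,\rho)$, then verify the two bounds in Definition~\ref{refcurvedef}. Two points deserve comment.

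First, a minor inaccuracy: the boundary conditions \eqref{initialdatum} and \eqref{bound1} do \emph{not} force $f_0(0)=\Phi^*(0)$; they only put both points on the $x$-axis. What actually gives $\sigma(0)=0$ is the structure of $\Psi$: since $\eta(0)=-1$, one computes $n_\Lambda(0)+\cot\alpha\,\eta(0)\tau_\Lambda(0)=(-1/\sin\alpha,0)^T$, so the whole fiber $\Psi(0,\cdot)$ lies on the $x$-axis, and $f_0(0)$ lands on it for some $q_0$. This is easy to fix.

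Second, and more substantively, your transversality argument for $\sigma'(x)>0$ and your estimate for $\partial_\sigma\rho$ both compare $\partial_x f_0$ at the parameter $\varphi(\sigma)$ with the frame $(\tau_\Lambda,n_\Lambda)$ and the quantities $\Psi_\sigma,\Psi_q$ at the parameter $\sigma$. Condition~\ref{(2)}, however, bounds $|\partial_\sigma f_0(\sigma)-\partial_\sigma\Phi^*(\sigma)|$ at the \emph{same} parameter. The missing ingredient is a quantitative control of the parameter shift $|\varphi(\sigma)-\sigma|$ (equivalently $|g_1(\tilde\sigma)-\tilde\sigma|$ in the paper's notation). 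The paper handles this by bounding $|\Phi^*(g_1(\tilde\sigma))-\Phi^*(\tilde\sigma)|$ via the $C^0$-part of condition~\ref{(2)} and the strip width from condition~\ref{(1)}, and then converting this chord bound into an arc-length bound using the circle of radius $1/\|\kappa_\Lambda\|_{C^0}$; the resulting $\arcsin$ estimate is precisely where assumption~\eqref{lam} enters. Only after this do the decompositions $I+II$ (for injectivity) and the analogous splitting in the $\partial_\sigma\rho$ estimate close. Your sketch treats this as absorbed in ``bookkeeping,'' but it is the one genuinely geometric step and should be made explicit.

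On the positive side, your route to the $C^0$-bound on $\rho$ is actually cleaner than the paper's: from condition~\ref{(1)} you get $|\rho|<\lambda d$ directly, and since \eqref{lambdasmall} forces $\lambda<1/6$ while $d=2K_0$, the bound $\|\rho\|_{C^0}<K_0/3$ follows in one line.
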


\begin{proof}
	The proof is done in two steps: In the first step, we show that there exist functions $\varphi$ and $\rho$ fulfilling Definition \ref{refcurvedef}.\ref{condi1}. Then we prove the bounds given in Definition \ref{refcurvedef}.\ref{condi2}.\\
	
	\hspace{-0.35cm}\textit{\uline{Step 1:} Finding $\varphi$ and $\rho$ fulfilling \eqref{para}\\}
	In order to prove this, we want to use the implicit function theorem. To this end, we need the local invertibility of the function $\Phi^*$ on a slightly larger open set: We extend the function $\Phi^*$ to $(- \tilde{\delta}, 1 + \tilde{\delta})$ for a small $\tilde{\delta} > 0$: We define the extension of $\Phi^*$, which we again denote by $\Phi^*$, by
	\begin{align*}
	\Phi^* (\sigma) &:= \begin{cases}
	\Phi^*(0) + \sigma \partial_{\sigma} \Phi^* (0) & \textrm{ for } \sigma \in (-\tilde{\delta}, 0), \\
	\Phi^*({\sigma}) & \textrm{ for } \sigma \in [0, 1], \\
	\Phi^*(1) + (\sigma - 1) \partial_{\sigma} \Phi^*(1) & \textrm{ for } \sigma \in (1, 1 + \tilde{\delta}).
	\end{cases}
	\end{align*}
	Furthermore, we extend the function $\eta$ by
	\begin{align*}
	\eta (\sigma) &:= \begin{cases}
	\eta (0) & \textrm{ for } \sigma \in (-\tilde{\delta}, 0), \\
	\eta (\sigma) & \textrm{ for } \sigma \in [0, 1], \\
	\eta (1) & \textrm{ for } \sigma \in (1, 1 + \tilde{\delta}), 
	\end{cases}
	\end{align*}
	and call the extension $\eta$ again. We observe that for a sufficiently small $\tilde{\delta}>0$ the function $\Psi$ is still a local diffeomorphism on $(-\tilde{\delta}, 1 + \tilde{\delta}) \times (-d, d)$, where we used Lemma \ref{diffeo}. For the following argumentation, it is convenient to decompose the local inverse of $\Psi$ at the point $\Psi( {\tilde{\sigma}}, q) = p$ into $\Psi^{-1} = (\Pi_{\Lambda}, d_{\Lambda})$, such that 
	\begin{align}
	\Pi_{\Lambda} \in C^1 \big(U; (-\tilde{\delta}, 1 + \tilde{\delta}) \big) && \textrm{ and } && d_{\Lambda} \in C^1 \big(U;  (-d, d) \big),
	\label{decomp}
	\end{align}
	where $U$ is a suitable neighborhood of $p$ in $\mathbb{R}^2$. Note that the inverse is not necessarily unique. \\ 
	
	Moreover, we need to extend the initial curve $f_0$. We denote by  $\tau_{\Gamma_0}(\sigma) = \nicefrac{\partial_\sigma f_0 (\sigma)}{|\partial_\sigma f_0 (\sigma)|}$
	and $n_{\Gamma_0}(\sigma) = R\tau_{\Gamma_0}(\sigma)$ the unit tangent vector and the unit normal vector of $\Gamma_0$ at $f_0(\sigma)$, respectively, for $\sigma \in [0, 1]$. Then we set
	\begin{align*}
	f_0 ({\tilde{\sigma}}) & := \begin{cases}
	f_0(0) + {{\tilde{\sigma}}} \tau_{\Gamma_0} (0) & \textrm{ for } {\tilde{\sigma}} \in (-\delta, 0), \\
	f_0({{\tilde{\sigma}}}) & \textrm{ for } {\tilde{\sigma}} \in [0, 1], \\
	f_0(1) + (1 - {{\tilde{\sigma}}}) \tau_{\Gamma_0} (1) & \textrm{ for } {\tilde{\sigma}} \in (1, 1 + \delta),
	\end{cases}
	\end{align*}
	and denote the extension by $f_0$ again. Hence, we can define
	\begin{align*}
	H: &(-\delta, 1 + \delta) \times (-\tilde{\delta}, 1 + \tilde{\delta}) \times (-d, d) \rightarrow \mathbb{R}^2 \\
	({\tilde{\sigma}}, \sigma, q) \mapsto &~ f_0({\tilde{\sigma}})- \Psi(\sigma, q)  = f_0({\tilde{\sigma}}) - \Phi^*(\sigma) - q (n_{\Lambda}(\sigma) + {\cot{\alpha}} \eta(\sigma) \tau_{\Lambda} (\sigma)).
	\end{align*}
	For a fixed but arbitrary ${{\tilde{\sigma}}}_0 \in [0,1]$ it follows by \eqref{decomp} that
	\begin{align*}
	H({{\tilde{\sigma}}}_0, \underbrace{\Pi_{\Lambda}(f_0({\tilde{\sigma}}_0))),d_{\Lambda}(f_0({\tilde{\sigma}}_0))}_{=: y_0}) = 0.
	\end{align*}
	Since $\Psi$ is a local diffeomorphism on $(-\tilde{\delta}, 1 + \tilde{\delta}) \times (-d, d)$, the function $\partial_y H$ is invertible in $({\tilde{\sigma}}_0, y_0)$. Moreover, $H$ and $\partial_y H$ are continuous in $({\tilde{\sigma}}_0, y_0)$. Thus, we obtain by the implicit function theorem, cf.\ Theorem 4.B in \protect{\cite{zeidler}}, that there exists a neighborhood $U_0 \subset (-\delta, 1 + \delta)$ of ${\tilde{\sigma}}_0$ and $V_0 \subset (-\tilde{\delta}, 1 + \tilde{\delta})$ of $y_0$ and a function $g: U_0 \rightarrow V_0$ such that $H({\tilde{\sigma}}, g({\tilde{\sigma}})) = 0$. As $f_0$ and $\Psi$ are $C^1$-maps, $g$ is as well of class $C^1$ in a neighborhood of $x_0$. \\
	
	In this way, we obtain for each ${\tilde{\sigma}}_0 \in [0,1]$ a function $g$, which is just locally defined in $U_0$. As $\Psi$ is a local $C^1$-diffeomorphism on the domain, $g$ is uniquely determined. Thus, two functions $g_{a}$ and $g_{b}$, for $a, b \in [0, 1]$ with $a\neq b$, have to coincide on $U_a \cap U_b$. Additionally, it follows by construction that the first component of $g$, which is denoted by $g_1$, fulfills $g_1(0) = g_1 (0) = 0$ and $g_1 (1) = 1$. By gluing together the locally defined functions, we obtain a $C^1$-function $g(\cdot)$ with $g_1 ([0,1])=[0,1]$. \\
	
	In order to define the function $\varphi$, cf.\ Definition \ref{refcurvedef}, we show that $g_1$ is injective. Consequently, it is invertible on $[0,1]$ and we can define 
	\begin{align*}
	\varphi({{\sigma}}) := g_1^{-1}(\sigma) \qquad \textrm{ and } \qquad \rho({{\sigma}}) := g_2 (\varphi({{\sigma}})) = g_2 \circ g_1^{-1}(\sigma).
	\end{align*}
	By the differential rule for inverse mappings %, cf.\ Corollary 4.37 in \protect{\cite{zeidler}},
        both functions are of~class~$C^1$.
	
	\begin{claim} \label{nummer4}
		The function $g_1:[0, 1] \rightarrow [0, 1]$ is injective.
	\end{claim}
	
	\begin{proof}[Proof of the claim:]
		We use the chain rule and obtain for ${\tilde{\sigma}} \in [0, 1]$
		\begin{align*}
		0 = \partial_{{\tilde{\sigma}}} (H({\tilde{\sigma}}, g({\tilde{\sigma}}))) = (\partial_{{\tilde{\sigma}}} H)({\tilde{\sigma}}, g({\tilde{\sigma}})) +
		(\partial_{y} H)({\tilde{\sigma}}, g({\tilde{\sigma}}))
		\partial_{{\tilde{\sigma}}} g({\tilde{\sigma}}).
		\end{align*}
		By Lemma \ref{diffeo}, it follows that $\partial_{{\tilde{\sigma}}} \Psi$ and $\partial_{q} \Psi$ are linearly independent for $(\sigma, q) \in [0, 1] \times (-d, d)$, where $d$ is given by \eqref{defd}. 
		Using Condition \ref{(1)}, we obtain from the previous calculations that 
		\begin{align*}
		\partial_{{\tilde{\sigma}}} g_1({\tilde{\sigma}}) = 0\qquad \Leftrightarrow \qquad
		\partial_{{{\sigma}}} f_{0}({\tilde{\sigma}}) =  \partial_{q} \Psi(g_1({\tilde{\sigma}})) \partial_{{\tilde{\sigma}}} g_2({\tilde{\sigma}}).
		\end{align*}
		Thus, we want to rule out that it holds $\tau_{\Gamma_0} (\tilde{\sigma}) \parallel  \partial_{q} \Psi(g_1({\tilde{\sigma}}))$, or equivalently $n_{\Gamma_0}  (\tilde{\sigma}) \perp \partial_{q} \Psi(g_1({\tilde{\sigma}}))$. Direct calculations provide
		\begin{align*}
		\left \langle n_{\Gamma_0}  (\tilde{\sigma}), \partial_{q} \Psi(g_1({\tilde{\sigma}})) \right \rangle &= \frac{1}{2}\left(|n_{\Gamma_0}  (\tilde{\sigma})|^2 + |\partial_{q} \Psi(g_1({\tilde{\sigma}}))|^2 - |n_{\Gamma_0}  (\tilde{\sigma}) - \partial_{q} \Psi(g_1({\tilde{\sigma}}))|^2 \right) \\
		&= \frac{1}{2} \left(1 + 1 + (\cot \alpha)^2 (\eta(g_1({\tilde{\sigma}})))^2 - |n_{\Gamma_0}  (\tilde{\sigma}) - \partial_{q} \Psi(g_1({\tilde{\sigma}}))|^2 \right).
		\end{align*}
		This implies that $\langle n_{\Gamma_0}  (\tilde{\sigma}), \partial_{q} \Psi(g_1({\tilde{\sigma}})) \rangle > \nicefrac{1}{2}$, if
		\begin{align}
		|n_{\Gamma_0}  (\tilde{\sigma}) - \partial_{q} \Psi(g_1({\tilde{\sigma}}))|^2 < 1 + (\cot \alpha)^2 (\eta({g_1({\tilde{\sigma}})}))^2
		\label{cond2}
		\end{align}
		holds true. We deduce
		\begin{align*}
		|n_{\Gamma_0}  (\tilde{\sigma}) - \partial_{q} \Psi(g_1({\tilde{\sigma}}))| &\leq |n_{\Gamma_0}  (\tilde{\sigma}) - n_{\Lambda} (g_1(\tilde{\sigma}))| + |n_{\Lambda} (g_1(\tilde{\sigma})) - \partial_{q} \Psi(g_1({\tilde{\sigma}}))| \\ 
		&\leq \underbrace{|n_{\Gamma_0}  (\tilde{\sigma}) - n_{\Lambda} (\tilde{\sigma})| + |n_{\Lambda} (\tilde{\sigma}) - n_{\Lambda} (g_1(\tilde{\sigma}))|}_{=:I + II} + |\cot \alpha \eta (g_1({\tilde{\sigma}}))| ,
		\end{align*}
		where we used the definition of $\partial_{q} \Psi$. Comparing this with \eqref{cond2}, it suffices to show that 
		\begin{align*}
		\left(I + II + |\cot \alpha \eta (g_1({\tilde{\sigma}}))| \right)^2 &< 1 + (\cot \alpha)^2 (\eta({g_1({\tilde{\sigma}})}))^2,
		\end{align*}
		which is equivalent to
		\begin{align*}
		(I + II)^2 + 2 |\cot \alpha|(I + II) &< 1.
		\end{align*}
		Solving the quadratic inequality and taking the positive solution, we deduce the condition
		\begin{align}
		I + II < {\sqrt{(\cot \alpha)^2 +1} - |\cot \alpha|} && \Rightarrow && g_1 \textrm{ is injective.}
		\label{suffcon}
		\end{align}
		
		In the following, we show that the left-hand side of \eqref{suffcon} is fulfilled by estimating $I$ and $II$ separately: For the term $I = |n_{\Gamma_0}  (\tilde{\sigma}) - n_{\Lambda} (\tilde{\sigma})|$, we obtain by adding a zero
		\begin{align}
		I & =
		\frac{\big|\partial_\sigma f_0(\tilde{\sigma}) |\partial_\sigma \Phi^* (\tilde{\sigma})| - \partial_\sigma \Phi^*  (\tilde{\sigma})|\partial_\sigma f_0(\tilde{\sigma})| \big|}{|\partial_\sigma f_0(\tilde{\sigma})||\partial_\sigma \Phi^*  (\tilde{\sigma})|} \label{nutze1} \\
		&\leq \frac{\big| |\partial_\sigma \Phi^*  (\tilde{\sigma})| - |\partial_\sigma f_0(\tilde{\sigma})| \big|}{|\partial_\sigma \Phi^*  (\tilde{\sigma})|} + \frac{|\partial_\sigma f_0(\tilde{\sigma}) - \partial_\sigma \Phi^* (\tilde{\sigma})|} {|\partial_\sigma \Phi^*  (\tilde{\sigma})|} \leq \frac{2}{|\partial_\sigma \Phi^*(\tilde{\sigma})|} |\partial_\sigma f_0(\tilde{\sigma}) - \partial_\sigma \Phi^*  (\tilde{\sigma})|, \nonumber
		\end{align}
		where ${|\partial_\sigma \Phi^*(\tilde{\sigma})|} = {\mathcal{L}[\Phi^*]}$, as $\Phi^*$ is parametrized proportional to arc length. 
		Thus, we have by Condition \ref{(2)}
		\begin{align}
		I < \frac{\sqrt{(\cot \alpha)^2 +1} - |\cot \alpha|}{2}.
		\label{one}
		\end{align}
		Considering the summand $II$, we deduce by the fundamental theorem of calculus
		\begin{align}
		II &
		\leq \left| \int_{\tilde{\sigma}}^{g_1(\tilde{\sigma})} {\mathcal{L}[\Phi^*]} \kappa_{\Lambda}(\sigma) n_{\Lambda} (\sigma)  \dd  \sigma \right| \leq
		\mathcal{L} [\Phi^* ([g_1 (\tilde{\sigma}), \tilde{\sigma} ])] \| \kappa_{\Lambda} \|_{C([0, 1])}, \label{zwerg}
		\end{align}
		since $|g_1(\tilde{\sigma}) - \tilde{\sigma}| {\mathcal{L}[\Phi^*]} = \mathcal{L} [\Phi^* ([g_1 (\tilde{\sigma}), \tilde{\sigma} ])]$ by the proportional-to-arc-length-parametrization of $\Phi^*$. Here, we assumed w.l.o.g.\ that $g_1(\tilde{\sigma}) \leq \tilde{\sigma}$. Thus, it remains to estimate $\mathcal{L} [\Phi^* ([g_1 (\tilde{\sigma}), \tilde{\sigma} ])]$ by geometric considerations. By the bound on the curvature, it follows, that $\mathcal{L} [\Phi^* ([g_1 (\tilde{\sigma}), \tilde{\sigma} ])]$ cannot be larger than the circle arc with radius $r = 1 / \| \kappa_{\Lambda} \|_{C([0, 1])}$ that connects the points $\Phi^* (g_1 (\tilde{\sigma}))$ and $\Phi^*(\tilde{\sigma})$. It is denoted by $\mathcal{L}_{max} [\Phi^* ([g_1 (\tilde{\sigma}), \tilde{\sigma} ])]$, cf.\ Figure \ref{abs}.
		\begin{center}\vspace{0.3 cm}
			%TODO to include graphics like this, create them in a separate document and include them as PDFs
			\scalebox{1}{%% Creator: Inkscape inkscape 0.92.3, www.inkscape.org
%% PDF/EPS/PS + LaTeX output extension by Johan Engelen, 2010
%% Accompanies image file '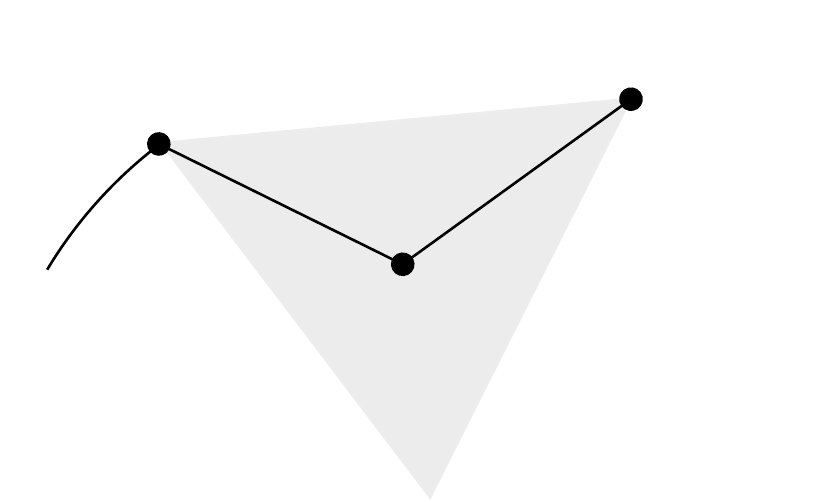' (pdf, eps, ps)
%%
%% To include the image in your LaTeX document, write
%%   \input{<filename>.pdf_tex}
%%  instead of
%%   \includegraphics{<filename>.pdf}
%% To scale the image, write
%%   \def\svgwidth{<desired width>}
%%   \input{<filename>.pdf_tex}
%%  instead of
%%   \includegraphics[width=<desired width>]{<filename>.pdf}
%%
%% Images with a different path to the parent latex file can
%% be accessed with the `import' package (which may need to be
%% installed) using
%%   \usepackage{import}
%% in the preamble, and then including the image with
%%   \import{<path to file>}{<filename>.pdf_tex}
%% Alternatively, one can specify
%%   \graphicspath{{<path to file>/}}
%% 
%% For more information, please see info/svg-inkscape on CTAN:
%%   http://tug.ctan.org/tex-archive/info/svg-inkscape
%%
\begingroup%
  \makeatletter%
  \providecommand\color[2][]{%
    \errmessage{(Inkscape) Color is used for the text in Inkscape, but the package 'color.sty' is not loaded}%
    \renewcommand\color[2][]{}%
  }%
  \providecommand\transparent[1]{%
    \errmessage{(Inkscape) Transparency is used (non-zero) for the text in Inkscape, but the package 'transparent.sty' is not loaded}%
    \renewcommand\transparent[1]{}%
  }%
  \providecommand\rotatebox[2]{#2}%
  \newcommand*\fsize{\dimexpr\f@size pt\relax}%
  \newcommand*\lineheight[1]{\fontsize{\fsize}{#1\fsize}\selectfont}%
  \ifx\svgwidth\undefined%
    \setlength{\unitlength}{236.07036489bp}%
    \ifx\svgscale\undefined%
      \relax%
    \else%
      \setlength{\unitlength}{\unitlength * \real{\svgscale}}%
    \fi%
  \else%
    \setlength{\unitlength}{\svgwidth}%
  \fi%
  \global\let\svgwidth\undefined%
  \global\let\svgscale\undefined%
  \makeatother%
  \begin{picture}(1,0.61132258)%
    \lineheight{1}%
    \setlength\tabcolsep{0pt}%
    \put(0,0){\includegraphics[width=\unitlength,page=1]{abs1.pdf}}%
    \put(0.48543392,0.33858277){\color[rgb]{0,0,0}\makebox(0,0)[t]{\lineheight{0}\smash{\begin{tabular}[t]{c}\textbf{$f_0 (\tilde{\sigma})$}\end{tabular}}}}%
    \put(0.08436245,0.46077875){\color[rgb]{0,0,0}\makebox(0,0)[t]{\lineheight{0}\smash{\begin{tabular}[t]{c}\textbf{$\Phi^* (g_1 (\tilde{\sigma}))$}\end{tabular}}}}%
    \put(0.8394606,0.50537715){\color[rgb]{0,0,0}\makebox(0,0)[t]{\lineheight{0}\smash{\begin{tabular}[t]{c}\textbf{$\Phi^* (\tilde{\sigma})$}\end{tabular}}}}%
    \put(0,0){\includegraphics[width=\unitlength,page=2]{abs1.pdf}}%
    \put(0.23665739,0.1792275){\color[rgb]{0,0,0}\rotatebox{-26.24250123}{\makebox(0,0)[t]{\lineheight{0}\smash{\begin{tabular}[t]{c}\textbf{$< \frac{\lambda  C_{\alpha}\sqrt{1 + (\cot \alpha )^2}}{\|\kappa_{\Lambda}\|_{C^0[0, 1]}} $}\end{tabular}}}}}%
    \put(0.75985862,0.2380452){\color[rgb]{0,0,0}\rotatebox{36.09328585}{\makebox(0,0)[t]{\lineheight{0}\smash{\begin{tabular}[t]{c}\textbf{$< \frac{\overline{C_{\alpha}(\lambda)}}{\|\kappa_{\Lambda}\|_{C^0[0, 1]}}$}\end{tabular}}}}}%
    \put(0.51898333,0.60449294){\color[rgb]{0,0,0}\makebox(0,0)[t]{\lineheight{0}\smash{\begin{tabular}[t]{c}\textbf{$\mathcal{L}_{max} [\Phi^* ([g_1 (\tilde{\sigma}), \tilde{\sigma} ])]$}\end{tabular}}}}%
    \put(0,0){\includegraphics[width=\unitlength,page=3]{abs1.pdf}}%
  \end{picture}%
\endgroup%
}
			

\captionof{figure}{The estimate of $\mathcal{L}_{max} [\Phi^* ([g_1 (\tilde{\sigma}), \tilde{\sigma} ])]$ for $|\kappa_{\Lambda}(\sigma)| = \| \kappa_{\Lambda} \|_{C([0, 1])}$ for $\sigma \in [g_1 (\tilde{\sigma}), \tilde{\sigma} ]$.}
			\label{abs}
		\end{center}\vspace{0 cm}
		By Condition \ref{(1)} of Lemma \ref{refcurve1} it holds
		\begin{align}
		|\Phi^* (g_1 (\tilde{\sigma})) - f_0 (\tilde{\sigma})| = |d_{\Lambda}(f_0 (\tilde{\sigma}))| < \lambda d \sqrt{1 + (\cot \alpha)^2} \leq \frac{\lambda  C_{\alpha}\sqrt{1 + (\cot \alpha)^2}}{\|\kappa_{\Lambda}\|_{C^0[0, 1]}}.
		\label{nummer8}
		\end{align}
		Moreover, we have a bound on $|\Phi^* (\tilde{\sigma}) - f_0 (\tilde{\sigma})|$ for $\tilde{\sigma} \in [0, 1]$ by Condition \ref{(2)} of Lemma \ref{refcurve1}. Thus, we observe by adding a zero
		\begin{align*}
		|\Phi^* (g_1 (\tilde{\sigma})) - \Phi^* (\tilde{\sigma})| 
		&< \left[\lambda  C_{\alpha}\sqrt{1 + (\cot \alpha)^2} + \overline{C_{\alpha}(\lambda)}\right] \frac{1}{\|\kappa_{\Lambda}\|_{C^0[0, 1]}}
		\end{align*}
		and the estimated term corresponds to the length of the doted line in Figure \ref{abs}. The inequality
		\begin{align*}
		\overline{C_{\alpha}(\lambda)} = \left(1 - \sqrt{(\lambda C_{\alpha} \cot \alpha)^2 + (1 - \lambda C_{\alpha})^2}\right) < 1 - \sqrt{(1 - \lambda C_{\alpha})^2} = \lambda C_{\alpha},
		\end{align*}
		provides
		\begin{align*}
		\left|\Phi^* (g_1 (\tilde{\sigma})) - \Phi^* (\tilde{\sigma}) \right| <  \frac{2 \lambda  C_{\alpha}\sqrt{1 + (\cot \alpha)^2}}{\| \kappa_{\Lambda} \|_{C([0, 1])}}.
		\end{align*}
		Now, we can use basic geometry on the symmetric rectangular triangles, which arise by splitting the grey area in Figure \ref{abs}, to estimate $\mathcal{L} [\Phi^* ([g_1 (\tilde{\sigma}), \tilde{\sigma} ])]$.
		It follows
		\begin{align*}
		\mathcal{L}_{max} [\Phi^* ([g_1 (\tilde{\sigma}), \tilde{\sigma} ])] &\leq  2 \arcsin \left( \frac{|\Phi^* (g_1 (\tilde{\sigma})) - \Phi^* (\tilde{\sigma})| \| \kappa_{\Lambda} \|_{C([0, 1])}}{2} \right) \frac{1 }{\| \kappa_{\Lambda} \|_{C([0, 1])}}\\
		&< 2 \arcsin \left( \lambda C_{\alpha}\sqrt{1 + (\cot \alpha)^2} \right) \frac{1}{\| \kappa_{\Lambda} \|_{C([0, 1])}}. 
		\end{align*}
		By assumption \eqref{lam}, we have
		\begin{align*}
		\lambda C_{\alpha} \sqrt{1 + (\cot \alpha)^2} &< \sin \left(\frac{\sqrt{(\cot \alpha)^2 + 1} - |\cot \alpha|}{4} \right).
		\end{align*}
		Hence,
		\begin{align*}
		\mathcal{L}_{max} [\Phi^* ([g_1 (\tilde{\sigma}), \tilde{\sigma} ])] < \frac{\sqrt{(\cot \alpha)^2 + 1} - |\cot \alpha|}{2 \| \kappa_{\Lambda} \|_{C([0, 1])}}
		\end{align*}
		is deduced. By plugging this into \eqref{zwerg}, we obtain for $II = |n_{\Lambda} (\tilde{\sigma}) - n_{\Lambda} (g_1(\tilde{\sigma}))|$
		\begin{align}
		II <   2 \arcsin \left( \lambda C_{\alpha}\sqrt{1 + (\cot \alpha)^2} \right) < \frac{\sqrt{(\cot \alpha)^2 + 1} - |\cot \alpha|}{2}.
		\label{nutze2}
		\end{align}
		Combining this with \eqref{one} shows that condition \eqref{suffcon} is fulfilled.
	\end{proof}
	By construction we obtain for ${{\sigma}} \in [0,1]$
	\begin{align*}
	0 = H(\varphi({\sigma}), {{\sigma}}, \rho({{\sigma}})) = f_0(\varphi(\sigma)) - \Phi^*(\sigma) - \rho(\sigma) (n_{\Lambda}(\sigma) + {\cot{\alpha}} \eta(\sigma) \tau_{\Lambda} (\sigma)),
	\end{align*}
	hence, the identity \eqref{para} is fulfilled. Differentiation with respect to $\sigma$ yields
	\begin{align}
	0 = \partial_\sigma H(\varphi({\sigma}), {{\sigma}}, \rho({{\sigma}})) = \partial_\sigma f_0(\varphi(\sigma)) \varphi'(\sigma) - \Psi_\sigma ({{\sigma}}, \rho({{\sigma}})) - \Psi_q ({{\sigma}}) \partial_\sigma \rho({{\sigma}}).
	\label{nummer6}
	\end{align}
	
	\hspace{-0.35cm}\textit{\uline{Step 2:} Proof of the bounds given in Definition \ref{refcurvedef}.\ref{condi2}\\} 
	It remains to show that the bounds on $\rho$ and $\partial_\sigma \rho$ stated in Definition \ref{refcurvedef} hold true. To this end, the following claims are proven:
	\begin{claim} \label{hilfe}
		It holds $\|\rho \|_{C([0, 1])} < \frac{K_0(\alpha, \Phi^*, \eta)}{3}$.
	\end{claim}
	
	\begin{proof}[Proof of the claim]
		By the identity \eqref{para}, we obtain
		\begin{align*}
		\|\rho\|_{C([0, 1])} \leq \|\rho (n_{\Lambda} + {\cot{\alpha}} \eta \tau_{\Lambda})\|_{C([0, 1])} = \| f_0 \circ \varphi - \Phi^* \|_{C([0, 1])}.
		\end{align*}
		As reparametrization does not change the $\|\cdot\|_{C([0, 1])}$-norm, it follows
		\begin{align}
		\|\rho\|_{C([0, 1])} \leq \| f_0 - \Phi^* \circ g_1 \|_{C([0, 1])} < \frac{\lambda  C_{\alpha}\sqrt{1 + (\cot \alpha)^2}}{\|\kappa_{\Lambda}\|_{C^0[0, 1]}}, \label{esti}
		\end{align}
		where \eqref{nummer8} is used for the last inequality. By assumption \eqref{lambdasmall}, the inequality
		\begin{align*}
		\lambda  C_{\alpha}\sqrt{1 + (\cot \alpha)^2}  < \frac{1}{6 \left( 1 + (\cot \alpha)^2 + \hat{C}|\cot{\alpha}| \|\eta' \|_{C([0, 1])}  \right)}
		\end{align*}
		is inferred and therefore the claim.
	\end{proof}
	
	\begin{claim} \label{hilfe4}
		In the case $\alpha \neq \frac{\pi}{2}$, it holds $\left\| \partial_\sigma \rho \right\|_{C([0, 1])} < \frac{K_1(\alpha, \Phi^*)}{3}$.
	\end{claim}
	
	\begin{proof} 
		By taking the inner product of identity \eqref{nummer6} with $R\partial_\sigma f_0(\varphi(\sigma))$, where $R$ is the matrix which rotates vectors by $\nicefrac{\pi}{2}$ counterclockwise, we have
		\begin{align*}
		\langle R\partial_\sigma f_0(\varphi(\sigma)),\Psi_q ({{\sigma}}, \rho({{\sigma}})) \rangle \partial_\sigma \rho({{\sigma}}) = - \langle R\partial_\sigma f_0(\varphi(\sigma)), \Psi_\sigma ({{\sigma}}, \rho({{\sigma}}))\rangle.
		\end{align*}
		Taking into account $\langle n_{\Gamma_0}  (\tilde{\sigma}), \partial_{q} \Psi(g_1({\tilde{\sigma}})) \rangle > \nicefrac{1}{2}$, see proof of Claim \ref{nummer4}, we obtain
		\begin{align}
		\partial_\sigma \rho({{\sigma}}) = - \frac{\langle R\partial_\sigma f_0(\varphi(\sigma)), \Psi_\sigma ({{\sigma}}, \rho({{\sigma}}))\rangle}{\langle R\partial_\sigma f_0(\varphi(\sigma)),\Psi_q ({{\sigma}}, \rho({{\sigma}})) \rangle} = - \frac{\langle n_{\Gamma_0} (\varphi(\sigma)), \Psi_\sigma ({{\sigma}}, \rho({{\sigma}}))\rangle}{\langle n_{\Gamma_0} (\varphi(\sigma)),\Psi_q ({{\sigma}}, \rho({{\sigma}})) \rangle}.
		\label{nummer3}
		\end{align}
		Thus 
		\begin{align}
		\left\| \partial_\sigma \rho \right\|_{C([0, 1])}
		&\leq 2 \left\|\langle n_{\Gamma_0} (\varphi(\cdot)), \Psi_\sigma ({{\cdot}}, \rho({{\cdot}}))\rangle\right\|_{C([0, 1])}.
		\label{nummer2}
		\end{align}
		In the following, we use
		\begin{align*}
		\Psi_\sigma ({{\sigma}}, \rho({{\sigma}})) =  \mathcal{L}[\Phi^* ]\left(1 - \rho\kappa_{\Lambda} +  \rho {\cot{\alpha}} \frac{\eta'({\sigma})}{\mathcal{L}[\Phi^* ]} \right) \tau_{\Lambda}({\sigma}) + \left( \mathcal{L}[\Phi^* ] \rho {\cot{\alpha}} \eta({\sigma}) \kappa_{\Lambda} \right) n_{\Lambda}({\sigma}),
		\end{align*}
		see the calculation in the proof of Lemma \ref{diffeo}. First, we show that 
		\begin{align}
		|\langle n_{\Gamma_0} (\varphi(\sigma)), \tau_{\Lambda}({\sigma}) \rangle| = \sqrt{1 - \langle n_{\Gamma_0} (\varphi(\sigma)), n_{\Lambda}({\sigma}) \rangle^2}
		\label{name}
		\end{align}
		is small. To this end, we use again the representation
		\begin{align}
		\langle n_{\Gamma_0} (\varphi(\sigma)), n_{\Lambda}({\sigma}) \rangle = \frac{1}{2}\big(2 - | n_{\Gamma_0} (\varphi(\sigma)) - n_{\Lambda}({\sigma})|\big).
		\label{name1}
		\end{align}
		By triangle inequality, it follows
		\begin{align*}
		| n_{\Gamma_0} (\varphi(\sigma)) - n_{\Lambda}({\sigma})|\leq | n_{\Gamma_0} (\varphi(\sigma)) - n_{\Lambda}(\varphi(\sigma))| + |n_{\Lambda}(\varphi(\sigma)) - n_{\Lambda}({\sigma})| = I + II.
		\end{align*}
		We observe that we already derived bounds on both summands previously, cf.\ \eqref{nutze1} and \eqref{nutze2}. More precisely, we obtain by Condition \ref{(2)} in Lemma \ref{refcurve1} and assumption \eqref{lam} that
		\begin{align*}
		I < \frac{1}{(4 \cdot 144)^2 (\cot \alpha)^2} \qquad \textrm{ and } \qquad
		II < \frac{1}{(4 \cdot 144)^2 (\cot \alpha)^2}.
		%2 \arcsin \left( \lambda C_{\alpha}\sqrt{1 + (\cot \alpha)^2} \right) < \gamma,
		\end{align*}
		Consequently, it holds
		\begin{align*}
		| n_{\Gamma_0} (\varphi(\sigma)) - n_{\Lambda}({\sigma})| < \frac{1}{2(4 \cdot 144)^2 (\cot \alpha)^2} =: \gamma
		\end{align*}
		and by plugging this into \eqref{name1}
		\begin{align*}
		\langle n_{\Gamma_0} (\varphi(\sigma)), n_{\Lambda}({\sigma}) \rangle > \frac{1}{2}\left(2 - 2 \gamma\right) = 1 - \gamma.
		\end{align*}
		Finally, we deduce by \eqref{name}
		\begin{align*}
		\left|\langle n_{\Gamma_0} (\varphi(\sigma)), \tau_{\Lambda}({\sigma}) \rangle \right| < \sqrt{1 - (1- \gamma)^2} = \sqrt{2\gamma - \gamma^2} < \sqrt{2\gamma} = \frac{1}{2 \cdot 144 |\cot \alpha|}.
		\end{align*}
		This yields
		\small
		\begin{align*}
		\left| \langle n_{\Gamma_0} (\varphi(\sigma)), \Psi_\sigma ({{\sigma}}, \rho({{\sigma}}))\rangle \right| < \left|\mathcal{L}[\Phi^* ]\left(1 - \rho\kappa_{\Lambda} +  \rho {\cot{\alpha}} \frac{\eta'({\sigma})}{\mathcal{L}[\Phi^* ]} \right) \right| \sqrt{2\gamma} + \left| \mathcal{L}[\Phi^* ] \rho {\cot{\alpha}} \eta({\sigma}) \kappa_{\Lambda} \right|.
		\end{align*}
		\normalsize
		Using the bound on $\rho$ in \eqref{esti}, we infer that 
		\begin{align*}
		\left| \langle n_{\Gamma_0} (\varphi(\sigma)), \Psi_\sigma ({{\sigma}}, \rho({{\sigma}}))\rangle \right| < \mathcal{L}[\Phi^*] 2 \sqrt{2\gamma} +  \mathcal{L}[\Phi^* ] \lambda = \frac{\mathcal{L}[\Phi^*]}{144 |\cot \alpha|} +  \mathcal{L}[\Phi^* ] \lambda.
		\end{align*}
		The claim follows by the assumption \eqref{lambdasmall}.
	\end{proof}
	
	Finally, the bound on $\|\rho\|_{W_2^{\beta}((0, 1))}$ is shown. 
	\begin{claim} \label{hilfe3}
		$\|\rho\|_{W_2^{\beta}((0, 1))} $ is bounded by a constant depending on $\alpha, \Phi^*, \eta$, $\|f_0\|_{W_2^{\beta}((0, 1); \mathbb{R}^2)}$.
	\end{claim}
	
	\begin{proof}[Proof of the claim]
		First, we observe that $4(\mu - \nicefrac{1}{2}) \in (\nicefrac32, 2]$ for $\mu \in (\nicefrac78, 1]$. Using the characterization of Sobolev-Slobodetskii spaces in Remark~\ref{nummer1}.(iii), we have for $s \in (1, 2)$ that 
		\begin{align*}
		u \in W^s_2(I) && \Leftrightarrow && u \in W^{1}_2(I) \wedge u' \in {W^{s^*}_2(I)} \textrm{ for } s^* := s - \lfloor s \rfloor \in (0,1).
		\end{align*}
		By the Claims \ref{hilfe} and \ref{hilfe4}, we obtain bounds on $\|\rho\|_{C([0, 1])}$ for arbitrary $\alpha \in (0, \pi)$ and on $\|\partial_{\sigma} \rho\|_{C([0, 1])}$ for $\alpha \in (0, \pi) \backslash \{\nicefrac{\pi}{2}\}$. Note that in the case $\alpha = \nicefrac{\pi}{2}$, we obtain by \eqref{nummer2} the estimate for $\left\| \partial_\sigma \rho \right\|_{C([0, 1])}$. Thus it suffices to show that
		\begin{itemize}
			\item for $4(\mu - \nicefrac{1}{2}) \in (\nicefrac32, 2)$ the semi-norm $[\partial_\sigma \rho]_{W^{s^*}_2((0, 1))}$ is bounded by a suitable constant,
			\item for $4(\mu - \nicefrac{1}{2}) = 2$ the norm $\left\|\partial^2_\sigma \rho \right\|_{L_2((0, 1))}$ is bounded by a suitable constant.
		\end{itemize}
		To this end, we use equality \eqref{nummer3}: 
		We already proved that the denominator is bounded from below, see proof of Claim \ref{nummer4}. Moreover, we observe that $W^{a}_2((0, 1)) \hookrightarrow C ([0,1])$ for $a \in (\nicefrac{1}{2}, 1]$ by Proposition 2.10 in \protect{\cite{meyries_inter}}. Then it follows 
		by Corollary 2.86 in \cite{danchin}, that $W^{s^*}_2(I)$, $s^* \in (\nicefrac{1}{2}, 1)$ is closed with respect to multiplication. Moreover, $W^{1}_2((0, 1))$ is also a Banach algebra. Additionally, by Lemma \ref{lem3}, it holds $\nicefrac{1}{f} \in W^{s^*}_2(I)$, $s^* \in (\nicefrac{1}{2}, 1)$, if $f \in W^{s^*}_2(I)$ and $f$ is bounded away from zero. The analogous result holds for $f \in W^{1}_2(I)$. The claim follows by combining these results.
	\end{proof}
	This proves Lemma \ref{refcurve1}.
\end{proof}

There is one last step to conclude Theorem \ref{refcurve} from Lemma \ref{refcurve1}: We have to show that we can substitute Condition \ref{(1)} and \ref{(2)} in Lemma \ref{refcurve1} by conditions on the $C^0$-difference of $f_0$ and $\Phi^*$, and on the one of their first derivatives. Note that Condition \ref{(2)} is already in a convenient form. We observe that Condition \ref{(1)} cannot be achieved directly by choosing the difference small enough, as it cannot rule out that the initial curve is negative in a neighborhood of the boundary points, cf.\ Figure \ref{grey}.
\begin{center}\vspace{0.3 cm} 
	\scalebox{1}{%% Creator: Inkscape 0.48.3.1, www.inkscape.org
%% PDF/EPS/PS + LaTeX output extension by Johan Engelen, 2010
%% Accompanies image file '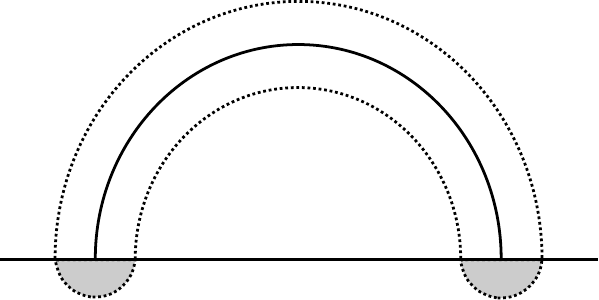' (pdf, eps, ps)
%%
%% To include the image in your LaTeX document, write
%%   \input{<filename>.pdf_tex}
%%  instead of
%%   \includegraphics{<filename>.pdf}
%% To scale the image, write
%%   \def\svgwidth{<desired width>}
%%   \input{<filename>.pdf_tex}
%%  instead of
%%   \includegraphics[width=<desired width>]{<filename>.pdf}
%%
%% Images with a different path to the parent latex file can
%% be accessed with the `import' package (which may need to be
%% installed) using
%%   \usepackage{import}
%% in the preamble, and then including the image with
%%   \import{<path to file>}{<filename>.pdf_tex}
%% Alternatively, one can specify
%%   \graphicspath{{<path to file>/}}
%% 
%% For more information, please see info/svg-inkscape on CTAN:
%%   http://tug.ctan.org/tex-archive/info/svg-inkscape
%%
\begingroup%
  \makeatletter%
  \providecommand\color[2][]{%
    \errmessage{(Inkscape) Color is used for the text in Inkscape, but the package 'color.sty' is not loaded}%
    \renewcommand\color[2][]{}%
  }%
  \providecommand\transparent[1]{%
    \errmessage{(Inkscape) Transparency is used (non-zero) for the text in Inkscape, but the package 'transparent.sty' is not loaded}%
    \renewcommand\transparent[1]{}%
  }%
  \providecommand\rotatebox[2]{#2}%
  \ifx\svgwidth\undefined%
    \setlength{\unitlength}{172.275bp}%
    \ifx\svgscale\undefined%
      \relax%
    \else%
      \setlength{\unitlength}{\unitlength * \real{\svgscale}}%
    \fi%
  \else%
    \setlength{\unitlength}{\svgwidth}%
  \fi%
  \global\let\svgwidth\undefined%
  \global\let\svgscale\undefined%
  \makeatother%
  \begin{picture}(1,0.50042454)%
    \put(0,0){\includegraphics[width=\unitlength]{linfty.pdf}}%
    \put(0.48892202,0.44169024){\color[rgb]{0,0,0}\makebox(0,0)[b]{\smash{$\Lambda$}}}%
  \end{picture}%
\endgroup%
}
	\captionof{figure}{We have to rule out that the initial curve is in the grey areas of the $C^0$-neighborhood of the reference curve $\Lambda$.}
	\label{grey}
\end{center}\vspace{0 cm}
The next lemma solves this problem:
\begin{lem} \label{refcurve2}
	Let $\Phi^*$ and $f_0$ fulfill the assumptions of Theorem \ref{refcurve}. Then $[f_0 (\sigma)]_2 > 0$ for $\sigma \in (0, x) \cup (y, 1)$, where $x := \min\{\sigma \in [0, 1] \; | \; [\Phi^* (\sigma)]_2 = \xi_0 \}$ and $y := \max\{\sigma \in [0, 1] \; | \; [\Phi^* (\sigma)]_2 = \xi_0 \}$. In particular, the Conditions \ref{(1)} and \ref{(2)} in Lemma \ref{refcurve1} are fulfilled. 
\end{lem}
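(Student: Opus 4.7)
The plan is to first prove the claimed positivity $[f_0(\sigma)]_2 > 0$ on $(0, x) \cup (y, 1)$ via a bootstrap argument near the endpoints, and then deduce that Conditions \ref{(1)} and \ref{(2)} of Lemma \ref{refcurve1} follow. By symmetry it suffices to treat the interval $(0, x)$; the case $(y, 1)$ is handled identically up to a sign.

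Since $[f_0(0)]_2 = 0$ by the boundary condition, the positivity on $(0, x]$ reduces by the fundamental theorem of calculus to showing $[\partial_\sigma f_0(s)]_2 > 0$ for $s \in [0, x]$. The boundary condition \eqref{bound1} on $\Phi^*$ together with its proportional-to-arc-length parametrization forces $[\partial_\sigma \Phi^*(0)]_2 = \mathcal{L}[\Phi^*] \sin \alpha$, and integrating the Frenet identity $\partial_\sigma^2 \Phi^* = \mathcal{L}[\Phi^*]^2 \kappa_\Lambda n_\Lambda$ produces the lower bound
\[
[\partial_\sigma \Phi^*(s)]_2 \geq \mathcal{L}[\Phi^*]\bigl(\sin \alpha - s\,\mathcal{L}[\Phi^*]\|\kappa_\Lambda\|_{C([0,1])}\bigr).
\]
Combining this with the hypothesis $|\partial_\sigma f_0(s) - \partial_\sigma \Phi^*(s)| < \xi_1 \leq \tfrac{\sin\alpha}{2}\mathcal{L}[\Phi^*]$ gives $[\partial_\sigma f_0(s)]_2 > 0$ as long as $s \leq \tfrac{\sin\alpha}{2\mathcal{L}[\Phi^*]\|\kappa_\Lambda\|_{C([0,1])}}$. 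On that same interval $[\Phi^*(s)]_2$ grows at rate at least $\tfrac{\sin\alpha}{2}\mathcal{L}[\Phi^*]$, so the smallness constraint $\xi_0 \leq \tfrac{(\sin\alpha)^2}{2 \|\kappa_\Lambda\|_{C([0,1])}}$ built into the definition of $\xi_0$, combined with Lemma \ref{kappabound}, guarantees that $\Phi^*$ attains height $\xi_0$ before this lower bound degenerates. Hence $x$ lies in the safe range and the claimed positivity follows.

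Condition \ref{(2)} of Lemma \ref{refcurve1} is then immediate from the explicit form of $\xi_0$ and $\xi_1$ in Theorem \ref{refcurve}. I expect the main obstacle to lie in Condition \ref{(1)}, $\Gamma_0 \subset \Psi([0,1] \times (-\lambda d, \lambda d))$: $C^0$-closeness alone does not suffice because, as Figure \ref{grey} emphasizes, the tubular neighborhood dips into the lower half-plane near the endpoints. The just-proved positivity of $[f_0]_2$ rules out the grey regions, and the design choice $\eta(0) = -1$, $\eta(1) = 1$ makes the twisted direction $n_\Lambda + \cot\alpha\,\eta\,\tau_\Lambda$ horizontal at $\sigma \in \{0,1\}$, so that $\Psi$ maps a small neighborhood of those endpoints of $\Lambda$ bijectively onto a piece of the $x$-axis and of the upper half-plane nearby. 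The inequality $\xi_0 \leq \overline{C_\alpha(\lambda)}/\|\kappa_\Lambda\|_{C([0,1])} \leq \lambda d$ (using $\overline{C_\alpha(\lambda)} \leq \lambda C_\alpha$ together with $d = C_\alpha/\|\kappa_\Lambda\|_{C([0,1])}$) then lets one invert $\Psi$ locally at each $f_0(\sigma)$, producing curvilinear coordinates with $|q| < \lambda d$ and establishing Condition \ref{(1)}.
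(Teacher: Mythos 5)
Your treatment of the positivity statement is essentially the paper's argument: integrate the curvature bound on $\Phi^*$ from the endpoint, transfer it to $f_0$ via the $C^0$-bound $\xi_1$ on the derivatives, and check that the first parameter $x$ at which $[\Phi^*]_2$ reaches $\xi_0$ lies inside the region where positivity is guaranteed. (The paper integrates the lower bound on $[\partial_\sigma f_0]_2$ to a quadratic and compares its positive root $z_+$ with an upper bound $\bar x$ for $x$, rather than demanding pointwise positivity of the derivative, but the two are interchangeable.) One numerical slip: with only $\xi_0 \leq \tfrac{(\sin\alpha)^2}{2\|\kappa_\Lambda\|_{C([0,1])}}$ your window does not close, since the height reachable before your derivative bound degenerates is only $\tfrac{(\sin\alpha)^2}{4\|\kappa_\Lambda\|_{C([0,1])}}$; you need the other branch of the minimum, $\xi_0 \leq \overline{C_\alpha(\lambda)}/\|\kappa_\Lambda\|_{C([0,1])} \leq \lambda C_\alpha/\|\kappa_\Lambda\|_{C([0,1])} \leq \tfrac{(\sin\alpha)^3}{6\|\kappa_\Lambda\|_{C([0,1])}}$ (using $C_\alpha \leq \sin^2\alpha$ and \eqref{lambdasmall}), which is small enough. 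This is repairable.

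The genuine gap is in Condition \ref{(1)}. You assert that $\xi_0 \leq \lambda d$ ``lets one invert $\Psi$ locally at each $f_0(\sigma)$, producing curvilinear coordinates with $|q| < \lambda d$,'' but $C^0$-closeness of $f_0(\sigma)$ to $\Phi^*(\sigma)$ within $\lambda d$ does \emph{not} imply $f_0(\sigma) \in \Psi([0,1]\times(-\lambda d,\lambda d))$: the fibers of $\Psi$ are the slanted lines $n_\Lambda + \cot\alpha\,\eta\,\tau_\Lambda$, so the image of the tube has perpendicular thickness strictly less than $\lambda d$ (reduced both by the shear and by the curvature of $\Phi^*$), and local invertibility of $\Psi$ gives no quantitative statement about which ball around $\Phi^*(\sigma)$ its image contains. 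This is exactly what the paper's first (and main) step supplies: enclosing $\Phi^*$ between two osculating circles of radius $r = \|\kappa_\Lambda\|_{C([0,1])}^{-1}$ and applying Pythagoras to the distance from $\Psi(\sigma,\lambda d)$ to those circles yields the guaranteed containment radius $\bar d = r\bigl(1-\sqrt{(\lambda C_\alpha\cot\alpha)^2+(1-\lambda C_\alpha)^2}\bigr) = r\,\overline{C_\alpha(\lambda)}$, which is strictly smaller than $\lambda d = r\lambda C_\alpha$ and is the reason $\overline{C_\alpha(\lambda)}$ appears in the definition of $\xi_0$ at all. Without this computation (or an equivalent one) the inclusion $\Gamma_0 \subset \Psi([0,1]\times(-\lambda d,\lambda d)) \cup \bigcup_{i=0,1} B^{C^0}_{\bar d}(\Phi^*(i))$ is unproven, and the subsequent endpoint discussion (where your observations about $\eta(0)=-1$, $\eta(1)=1$ making the fibers horizontal, and the positivity of $[f_0]_2$, are correctly aimed) cannot complete the argument.
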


\begin{proof}[Proof of Lemma \ref{refcurve2}]
	The proof of the lemma is split into two parts: First we show that a condition on the $L_\infty$-distance of $f_0$ and $\Phi^*$ in terms of $\|\kappa_{\Lambda}\|_{C([0, 1])}$ is sufficient to guarantee $f_0([0, 1]) \subset \Psi([0, 1] \times (- \lambda d, \lambda d)) \cup \bigcup_{i=0, 1} B^{C^0}_{\bar{d}}(\Phi^*(i))$, which
	is a first step to replace Condition \ref{(1)} of Lemma \ref{refcurve1}.
	The remaining part is done afterwards: We have to assure that the initial curve is not contained in the grey parts in Figure \ref{grey}. \\
	
	W.l.o.g.\ we can assume that $\alpha \in (0, \nicefrac{\pi}{2})$: the handling of the case $\alpha \in (\nicefrac{\pi}{2}, \pi)$ will be the same as the first one, as they are symmetric. Moreover, in the case $\alpha = \nicefrac{\pi}{2}$ the bound is just given by $\lambda d$, as $\cot{\nicefrac{\pi}{2}} = 0$.
	
	Now let ${\sigma} \in [0, 1]$ be arbitrary but fixed. By rotation, we can assume that the tangent vector of $\Phi^*({\sigma})$ is horizontal. Due to the curvature bounds on the reference curve $\Phi^*$, we deduce that the curve is in the complement of two circles with radius $r :=(\|\kappa_{\Lambda}\|_{C([0, 1])})^{-1}$ touching at $\Phi^*({\sigma})$, cf.\ Figure \protect{\ref{3}}.
	\begin{center}\vspace{0cm}
		\scalebox{1}{ %% Creator: Inkscape 0.48.3.1, www.inkscape.org
%% PDF/EPS/PS + LaTeX output extension by Johan Engelen, 2010
%% Accompanies image file '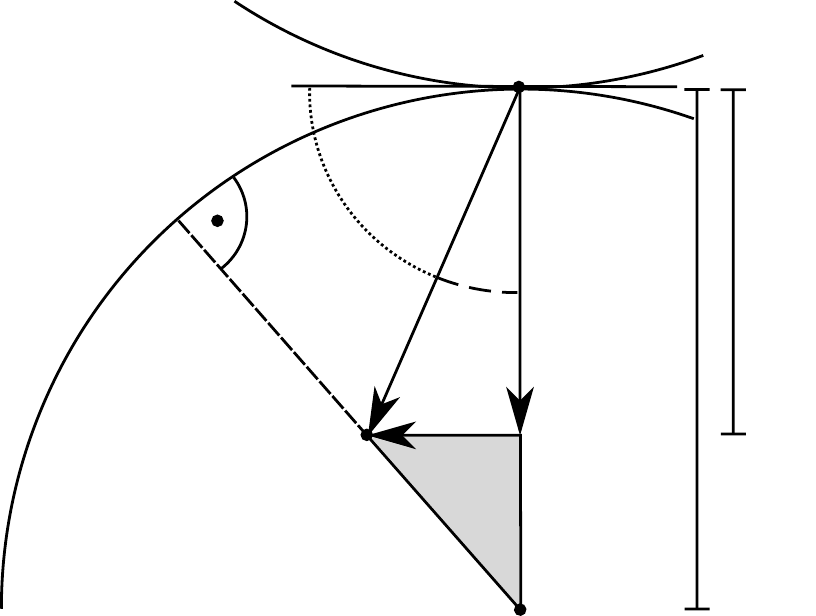' (pdf, eps, ps)
%%
%% To include the image in your LaTeX document, write
%%   \input{<filename>.pdf_tex}
%%  instead of
%%   \includegraphics{<filename>.pdf}
%% To scale the image, write
%%   \def\svgwidth{<desired width>}
%%   \input{<filename>.pdf_tex}
%%  instead of
%%   \includegraphics[width=<desired width>]{<filename>.pdf}
%%
%% Images with a different path to the parent latex file can
%% be accessed with the `import' package (which may need to be
%% installed) using
%%   \usepackage{import}
%% in the preamble, and then including the image with
%%   \import{<path to file>}{<filename>.pdf_tex}
%% Alternatively, one can specify
%%   \graphicspath{{<path to file>/}}
%% 
%% For more information, please see info/svg-inkscape on CTAN:
%%   http://tug.ctan.org/tex-archive/info/svg-inkscape
%%
\begingroup%
  \makeatletter%
  \providecommand\color[2][]{%
    \errmessage{(Inkscape) Color is used for the text in Inkscape, but the package 'color.sty' is not loaded}%
    \renewcommand\color[2][]{}%
  }%
  \providecommand\transparent[1]{%
    \errmessage{(Inkscape) Transparency is used (non-zero) for the text in Inkscape, but the package 'transparent.sty' is not loaded}%
    \renewcommand\transparent[1]{}%
  }%
  \providecommand\rotatebox[2]{#2}%
  \ifx\svgwidth\undefined%
    \setlength{\unitlength}{234.88495942bp}%
    \ifx\svgscale\undefined%
      \relax%
    \else%
      \setlength{\unitlength}{\unitlength * \real{\svgscale}}%
    \fi%
  \else%
    \setlength{\unitlength}{\svgwidth}%
  \fi%
  \global\let\svgwidth\undefined%
  \global\let\svgscale\undefined%
  \makeatother%
  \begin{picture}(1,0.75518287)%
    \put(0,0){\includegraphics[width=\unitlength]{}}%
    \put(0.50066496,0.57375202){\color[rgb]{0,0,0}\makebox(0,0)[b]{\smash{$\alpha$}}}%
    \put(0.62323899,0.45118595){\color[rgb]{0,0,0}\makebox(0,0)[b]{\smash{$\frac{\pi}{2} - \alpha$}}}%
    \put(0.88370856,0.11412952){\color[rgb]{0,0,0}\makebox(0,0)[lb]{\smash{$r$}}}%
    \put(0.92967389,0.4971482){\color[rgb]{0,0,0}\makebox(0,0)[lb]{\smash{$\lambda d$}}}%
    \put(0.62256312,0.17507172){\color[rgb]{0,0,0}\makebox(0,0)[b]{\smash{\small{$\lambda d \eta(\sigma) \cot \alpha \tau_{\Lambda}$}}}}%
    \put(0.27083885,0.31329926){\color[rgb]{0,0,0}\makebox(0,0)[lb]{\smash{$\bar{d}$}}}%
    \put(0.51598562,0.45663061){\color[rgb]{0,0,0}\rotatebox{66.2375064}{\makebox(0,0)[b]{\smash{\small{$\lambda d(n_{\Lambda} + \eta(\sigma) \cot \alpha \tau_{\Lambda})$}}}}}%
    \put(0.63758921,0.6733512){\color[rgb]{0,0,0}\makebox(0,0)[b]{\smash{$\Phi^*(\sigma)$}}}%
    \put(0.27663356,0.17899247){\color[rgb]{0,0,0}\makebox(0,0)[lb]{\smash{$\Psi(\sigma, \lambda d)$}}}%
    \put(0.46555558,0.07416502){\color[rgb]{0,0,0}\makebox(0,0)[b]{\smash{$r - \bar{d}$}}}%
  \end{picture}%
\endgroup%
}
		\captionof{figure}{Estimation of the $C^0$-neighborhood.}
		\label{3}
	\end{center}\vspace{0 cm}
	This implies that the distance of $\Psi({\sigma}, \lambda d)$ to the curve $\Phi^*([0, 1])$ can be bounded from below by the distance of $\Psi({\sigma}, \lambda d)$ to the circle around, which we denote by $\bar{d}$. In order to quantify $\bar{d}$, we use elementary geometry on the grey triangle in Figure \ref{3}. By Pythagoras' Theorem, we have
	\begin{align*}
	(r - \bar{d})^2 = (\lambda d \eta({\sigma}) \cot \alpha)^2 + (r - \lambda d)^2
	&& \Leftrightarrow && \bar{d} = r - \sqrt{(\lambda d \eta({\sigma}) \cot \alpha)^2 + (r - \lambda d)^2}.
	\end{align*}
	Using the definition of $d$ in \eqref{defd}, we obtain 
	\begin{align*}
	\bar{d} 
	\geq r \left(1 - \sqrt{(\lambda C_{\alpha} \cot \alpha)^2 + (1 - \lambda C_{\alpha})^2}\right)
	= r \overline{C_{\alpha}(\lambda)}.
	\end{align*}
	These considerations are a starting point to replace Condition \ref{(1)} by a $C^0$-condition: If $f_0$ and $\Phi^*$ fulfill $|f_0 (\sigma) - \Phi^* (\sigma) | < \bar{d}$ for all $\sigma \in [0, 1]$, which holds due to Condition \ref{(2)}, then it follows $f_0([0, 1]) \subset \Psi([0, 1] \times (- \lambda d, \lambda d)) \cup \bigcup_{i=0, 1} B^{C^0}_{\bar{d}}(\Phi^*(i))$. \\ \\
	In order to satisfy Condition \ref{(1)} it remains to prove that $[f_0 (\sigma)]_2$ is non negative in a neighbourhood of the boundary points, more precisely for $\sigma \in (0, x) \cup (y, 1)$, where $x := \min\{\sigma \in [0, 1] \; | \; [\Phi^* (\sigma)]_2 = \xi_0\}$ and $y := \max\{\sigma \in [0, 1] \; | \; [\Phi^* (\sigma)]_2 = \xi_0\}$. 
	Since the situations at the boundary points are the same, we concentrate on the boundary point $\sigma = 0$. By direct estimates it follows
	\begin{align*}
	[\Phi^* (\sigma)]_2
	&\geq - \int_0^{\sigma} \int_0^{{{\tilde{\sigma}}}}  \| \kappa_{\Lambda} \|_{C([0, 1])} (\mathcal{L}[\Phi^*] )^2 \dd {\bar{{{\sigma}}}} \dd {{\tilde{\sigma}}} + \sin \alpha \mathcal{L}[\Phi^*] \sigma \\
	&= - \frac{\sigma^2}{2} \| \kappa_{\Lambda} \|_{C([0, 1])} (\mathcal{L}[\Phi^*] )^2 + \sin \alpha \mathcal{L}[\Phi^*] \sigma,
	\end{align*}
	where it was used that
	\begin{align*}
	\partial_{\sigma} \Phi^* (0) = 
	\mathcal{L}[\Phi^*] \begin{pmatrix} \cos \alpha \\
	\sin \alpha \end{pmatrix}.
	\end{align*}
	We deduce
	\begin{align*}
	[\Phi^* (\sigma)]_2  > \xi_0 && \textrm{ if } 
	- \frac{\sigma^2}{2} \| \kappa_{\Lambda} \|_{C([0, 1])} (\mathcal{L}[\Phi^*] )^2 + \sin \alpha \mathcal{L}[\Phi^*] \sigma > \xi_0.
	\end{align*}
	By using the quadratic formula, the previous inequality holds true for $\sigma \in (x_-, x_+)$, where
	\begin{align*}
	x_{\mp} = \frac{\sin \alpha \mathcal{L}[\Phi^*] \mp \sqrt{(\sin \alpha \mathcal{L}[\Phi^*])^2 - 2 \| \kappa_{\Lambda} \|_{C([0, 1])} (\mathcal{L}[\Phi^*])^2 \xi_0}}{\| \kappa_{\Lambda} \|_{C([0, 1])} (\mathcal{L}[\Phi^*])^2 }. 
	\end{align*}
	For $\xi_0  < (\sin \alpha)^2 (2 \| \kappa_{\Lambda} \|_{C([0, 1])})^{-1}$, we obtain by using $a^2-b^2 > (a-b)^2$ for $a > b > 0$ to the argument of the square root that
	\begin{align*}
	x_{-} < \frac{\sqrt{2 \xi_0}}{\sqrt{\| \kappa_{\Lambda} \|_{C([0, 1])}} \mathcal{L}[\Phi^*]} := \bar{x}.
	\end{align*}
	Thus, it suffices to show $[f (\sigma)]_2 > 0$ for $\sigma \in (0, \bar{x})$: By the fundamental theorem of calculus, we have 
	\begin{align*}
	[f (\sigma)]_2 &= \int_0^{\sigma} [\partial_{{{{\sigma}}}} f_0 (\tilde{\sigma})]_2 \dd {{\tilde{\sigma}}} \geq \int_0^{\sigma} \left( [\partial_{{{\tilde{\sigma}}}} \Phi^* (\tilde{\sigma})]_2 - \xi_1 \right) \dd {{\tilde{\sigma}}} \\
	&\geq - \frac{\sigma^2}{2} \| \kappa_{\Lambda} \|_{C([0, 1])} (\mathcal{L}[\Phi^*] )^2 + \sin \alpha \mathcal{L}[\Phi^*] \sigma - \xi_1 \sigma,
	\end{align*}
	where we used $[f_0 (0)]_2 = 0$, $\partial_{\sigma} f_0 \in B^{C^0}_{\xi_1}(\partial_{\sigma}\Phi^*)$, and for the last line the same argument as for $[\Phi^* (\sigma)]_2$. The roots of the equation
	\begin{align*}
	- \frac{\sigma^2}{2} \| \kappa_{\Lambda} \|_{C([0, 1])} (\mathcal{L}[\Phi^*] )^2 + \sin \alpha \mathcal{L}[\Phi^*] \sigma - \xi_1 \sigma = 0
	\end{align*}
	are given by
	\begin{align*}
	z_{-} = 0 && \textrm{ and } && z_{+} = \frac{2(\mathcal{L}[\Phi^*]\sin \alpha - \xi_1)}{\| \kappa_{\Lambda} \|_{C([0, 1])} (\mathcal{L}[\Phi^*])^2}.
	\end{align*} 
	Note that $z_+$ is positive as $\xi_1 < \mathcal{L}[\Phi^*] \sin \alpha$. Thus, $[f (\sigma)]_2 > 0$ for $\sigma \in (z_-, z_+) = (0, z_+)$ and it remains to prove that $z+ > \bar{x}$, which is equivalent to
	\begin{align*}
          && \frac{2(\mathcal{L}[\Phi^*]\sin \alpha - \xi_1)}{\| \kappa_{\Lambda} \|_{C([0, 1])} (\mathcal{L}[\Phi^*])^2} > \frac{\sqrt{2 \xi_0}}{\sqrt{\| \kappa_{\Lambda} \|_{C([0, 1])}} \mathcal{L}[\Phi^*]}
        \end{align*}
        and which is a consequence of
        \begin{align*}
	 && \left(\mathcal{L}[\Phi^*]\sin \alpha > \sqrt{2 \xi_0 \| \kappa_{\Lambda} \|_{C([0, 1])}} \mathcal{L}[\Phi^*] \right) \; \wedge \; (\mathcal{L}[\Phi^*]\sin \alpha > 2 \xi_1).
	\end{align*}
	But the latter inequalities follow by the choice of $\xi_0$ and $\xi_1$. \\
	
	Combining this with the result from the first part shows that Condition \ref{(1)} holds true. As Condition \ref{(2)} is fulfilled by assumption, the proof is complete.
\end{proof}

\begin{proof}[Proof of Theorem \ref{refcurve}]
	The claim follows from the Lemmas \ref{refcurve1} and \ref{refcurve2}.
\end{proof}

\subsection{Some Technical Estimates \label{constructionII}}

In order to prove that the smoothed curves constructed in Section \ref{A} can be used as reference curves, we need the following estimates:
\begin{lem} \label{absch}
	Let $f_0: \bar{I} \rightarrow \mathbb{R}^2$, $I := (0, 1)$, be parametrized proportional to arc length and in $W_2^{4 (\mu - \nicefrac{1}{2})}(I; \mathbb{R}^2)$ for $\mu \in \left(\frac78, 1 \right]$.
	Moreover, let $f(t) = f(t, \cdot) \in C^5(\bar{I}; \mathbb{R}^2)$, $t \in (0, T)$, be the curves given by Lemma \ref{lemrefcurve}, cf.\ Lemma \ref{regular} for the regularity. Then it holds for $\frac{1}{6} > \delta > 0$
	\small
	\begin{align}
	\|f(\epsilon) - f_0\|_{C(\bar{I}; \mathbb{R}^2)}&\leq C \left(\epsilon^{\frac{2}{3} \mu - \frac{5}{12} - \delta}\| u_0\|_{W_2^{4\left(\mu - \nicefrac{1}{2}\right)}(I; \mathbb{R}^2)} +\epsilon^{\frac{11}{12} - \delta} \| h \|_{L_2(I; \mathbb{R}^2)} \right), \label{erste}\\
	\|f(\epsilon)\|_{C^2(\bar{I}; \mathbb{R}^2)} &\leq C \left({\epsilon^{-\frac{3}{4} + \frac{2}{3} \mu - \delta}} \|u_0\|_{W_2^{4\left(\mu - \nicefrac{1}{2}\right)}(I; \mathbb{R}^2)} +  \epsilon^{\frac{7}{12} - \delta} \| h \|_{L_2(I; \mathbb{R}^2)} + \|\xi\|_{C^2(\bar{I}; \mathbb{R}^2)}\right). \label{zweite}
	\end{align}
	\normalsize
	Additionally, we have for a sufficiently small $\delta > 0$
	\small
	\begin{align}
	\|f(\epsilon) - f_0\|_{C^1(\bar{I}; \mathbb{R}^2)}\leq C \left(\epsilon^{\frac{2}{3} \mu - \frac{7}{12} - \delta}\| u_0\|_{W_2^{4\left(\mu - \nicefrac{1}{2}\right)}(I; \mathbb{R}^2)} +\epsilon^{\frac{3}{4} - \delta} \| h \|_{L_2(I; \mathbb{R}^2)}\right). \label{dritte}
	\end{align}
	\normalsize
	Here, $\xi$, $u_0 := f_0 - \xi$ and $h:= \partial_x^6 \xi$ are quantities determined by $f_0$, see \eqref{xi}.
\end{lem}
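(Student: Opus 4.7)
The proof should follow the abstract semigroup framework already set up in the proof of Lemma~\ref{regular}. Writing $f(t) = v(t) + \xi$ with $v$ the mild solution of~\eqref{CP},
\begin{equation*}
v(\epsilon) = e^{-\epsilon A}u_0 + \int_0^{\epsilon} e^{-sA} h \dd s,
\end{equation*}
the task reduces to bounding the two building blocks in the Sobolev scale that embeds into $C$, $C^{1}$, $C^{2}$. The key inputs I would invoke are the standard analytic-semigroup estimates
\begin{equation*}
\bigl\|e^{-tA}x\bigr\|_{D(A^{\beta})} \leq C\, t^{\gamma-\beta}\|x\|_{D(A^{\gamma})},\qquad
\bigl\|(e^{-tA}-I)x\bigr\|_{D(A^{\gamma})} \leq C\, t^{\beta-\gamma}\|x\|_{D(A^{\beta})}
\end{equation*}
valid for $0\leq \gamma \leq \beta$ with $\beta-\gamma\leq 1$, the identification $D(A^{\theta})\hookrightarrow W_{2}^{6\theta}(I;\mathbb{R}^{2})$ with the appropriate vanishing boundary traces, and the Sobolev embeddings $W_{2}^{s}(\bar I;\mathbb{R}^{2})\hookrightarrow C^{k}(\bar I;\mathbb{R}^{2})$ for $s>k+\tfrac{1}{2}$.

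With this machinery in hand, all three inequalities follow from the same template. For~\eqref{erste} I would write $f(\epsilon)-f_{0} = (e^{-\epsilon A}-I)u_{0} + \int_{0}^{\epsilon} e^{-sA}h \dd s$, embed $W_{2}^{1/2+3\delta}\hookrightarrow C(\bar I;\mathbb{R}^{2})$, and apply the difference estimate with $\beta = \tfrac{4(\mu-1/2)}{6}$, $\gamma = \tfrac{1/2+3\delta}{6}$; this gives the exponent $\beta-\gamma = \tfrac{2}{3}\mu - \tfrac{5}{12} - \tfrac{\delta}{2}$ on $\|u_{0}\|_{W_{2}^{4(\mu-1/2)}}$, and the integral term yields $\int_{0}^{\epsilon} s^{-(1/2+3\delta)/6}\dd s = c\,\epsilon^{11/12 - \delta/2}\|h\|_{L_{2}}$. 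For~\eqref{zweite} one splits $\|f(\epsilon)\|_{C^{2}}\leq \|v(\epsilon)\|_{C^{2}}+\|\xi\|_{C^{2}}$, embeds via $W_{2}^{5/2+3\delta}\hookrightarrow C^{2}$, and uses the first (smoothing) semigroup estimate with $\beta>\gamma$ to generate the advertised \emph{negative} exponent $-\tfrac{3}{4}+\tfrac{2}{3}\mu$; the integral piece produces $\epsilon^{7/12-\delta/2}$. Estimate~\eqref{dritte} is proved identically via $W_{2}^{3/2+3\delta}\hookrightarrow C^{1}$. Renaming $\delta$ at the end reproduces the statement.

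The main technical obstacle is the careful identification of the fractional-power spaces $D(A^{\theta})$ with Sobolev subspaces of $W_{2}^{6\theta}$ incorporating the correct boundary conditions: whenever $6\theta$ crosses a threshold $k+\tfrac{1}{2}$, the trace of $\partial_{x}^{k}u$ must vanish. For $u_{0}=f_{0}-\xi$, the choice of $\xi$ in~\eqref{xi} ensures $u_{0}|_{\partial I}=\partial_{x}u_{0}|_{\partial I}=0$; the trace $\partial_{x}^{2}u|_{\partial I}$ is not yet required at any of the admissible $6\theta\leq 4(\mu-\tfrac{1}{2})\leq 2$, so no further compatibility is needed on the input side, while the smoothing of the semigroup ensures the analogous traces vanish on $v(\epsilon)$ for $\epsilon>0$. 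Finally, one must pick $\delta>0$ small enough to keep all exponents $\beta-\gamma$ in the admissible range: for~\eqref{erste} and~\eqref{zweite} the condition $\mu>\tfrac{7}{8}$ leaves uniform room, whereas~\eqref{dritte} requires $4(\mu-\tfrac{1}{2})>\tfrac{3}{2}+3\delta$, which is precisely the ``sufficiently small~$\delta$'' clause appearing in the statement.
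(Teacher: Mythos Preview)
Your proposal is correct and follows essentially the same route as the paper. The only technical variation is that the paper works with the real interpolation spaces $D_A(\theta,2)=(X,D(A))_{\theta,2}$ and invokes Lunardi's estimate $\|t^{n-\alpha+\beta}(-A)^n e^{-tA}\|_{L(D_A(\alpha,2),D_A(\beta,2))}\leq C$ (applied with $n=1$ after rewriting $e^{-\epsilon A}u_0-u_0=\int_0^\epsilon -Ae^{-sA}u_0\,\mathrm{d}s$), whereas you phrase the same bounds via fractional power domains $D(A^\theta)$ and use the difference estimate $\|(e^{-tA}-I)x\|_{D(A^\gamma)}\leq Ct^{\beta-\gamma}\|x\|_{D(A^\beta)}$ directly; in this $L_2$ setting these two scales coincide and yield identical exponents (up to your harmless relabeling of $\delta$). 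Your discussion of the boundary-trace compatibility of $u_0$ in the fractional scale is exactly the point the paper handles by identifying $D_A(\theta,2)$ with the Sobolev subspace carrying the appropriate vanishing traces.
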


\begin{proof}
	Since we want to take advantage of the properties of the analytic $C_0$-semigroup generated by $-A$, we switch again to the Cauchy problem \eqref{CP} which is equivalent to \eqref{sys}
	for the initial datum $u_0 = f_0 - \xi$ and the right-hand side $h(t) = h = \partial_x^6 \xi$, $\xi \in C^{\infty}(\bar{I}; \mathbb{R}^2)$ is given by \eqref{xi}. As $-A$ generates an analytic $C_0$-semigroup, cf.\ Claim 6.1.5 in \cite{butzdiss}, the mild solution formula can be exploited, see proof of Claim \ref{mildsol},
	\begin{align*}
	v(t) = e^{-tA}u_0 + \int_0^{t} e^{-sA} h \dd s.
	\end{align*}
	Additionally, we will use the following characterization of $D(A)$: It is well-known, that the norms $\|\cdot\|_{W^{6}_2(I;\mathbb{R}^2)}$ and $\|\cdot\|_{D(A)}$ are equivalent on $D(A)$, see Lemma 6.1.7 in \cite{butzdiss}. We can consider the components of $u$ separately, as they are not coupled by the equations. Applying Section 4.3.3 in \protect{\cite{triebel}} on both components of $u$, we obtain the representation
	\begin{align}
	D(A) = B^6_{2,2, \{B_j\}} (I; \mathbb{R}^2) := \left\{f \in B^6_{2,2} (I; \mathbb{R}^2): B_j f_{|\partial I} = 0 \textrm{ for } j < s - \frac{1}{2} \right\}, 
	\label{chara}
	\end{align}
	where $B_j$ are the differential operators given by
	\begin{align*}
	B_j f := b_j \partial^j_{x} f \qquad \textrm{ for } b_j =  
	\begin{cases}
	1 &\textrm{ for } j = 0, 1, 2, \\
	0 &\textrm{ for } j = 3, 4, 5.
	\end{cases}
	\end{align*}
	
	In the following, our main tool will be Proposition 2.2.9(i) in \protect{\cite{lunardi}}, which is stated here in a notation adjusted to our problem, as $-A$ (and not $A$) is the generator of the semigroup: \\
	Let $(\alpha, p)$,  $(\beta, p) \in (0, 1) \times [1, \infty] \cup \{(1, \infty)\}$, and let $n \in \mathbb{N}$. Then there are constants $C= C(n, p, \alpha, \beta)$ such that
	\begin{align}
	\left \|t^{n - \alpha + \beta} (-A)^n e^{-tA} \right\|_{L({D}_A (\alpha, p), D_A (\beta, p))} \leq C&& \textrm{ for } 0< t \leq 1.
	\label{luna}
	\end{align}
	The statement also holds for $n =0$, provided $\alpha \leq \beta$. 
	Moreover, we will use an inequality in the proof of the latter proposition:
	For $\alpha = 0$, we set $D_A (\alpha, p) = X$, cf.\ Remark before Proposition 2.2.9 in \protect{\cite{lunardi}}. Then, we have
	\begin{align}
	\left \|t^{n+ \beta} (-A)^n e^{-tA} \right\|_{L(X, D_A (\beta, p))} \leq C &&  \textrm{ for }  0< t \leq 1.
	\label{lunaproof1}
	\end{align}
	We start proving the estimate \eqref{erste}. Clearly, it holds
	\begin{align*}
	f(\epsilon) - f_0 = (f(\epsilon) - \xi) - (f_0 - \xi) = v(\epsilon) - u_0.
	\end{align*}
	Applying Lemma \ref{unab0b}, we have for $\delta > 0$
	\begin{align*}
	W_2^{\nicefrac12 + 6 \delta}\left(I; \mathbb{R}^2\right) \hookrightarrow C^{\gamma}\left(\bar{I}; \mathbb{R}^2\right)  \hookrightarrow C\left(\bar{I}; \mathbb{R}^2\right),
	\end{align*}
	for $0 < \gamma < 6 \delta$. Thus, it follows
	\begin{align*}
	\|f(\epsilon) - f_0 \|_{C(\bar{I}; \mathbb{R}^2)} \leq  C \|v(\epsilon) - u_0\|_{W^{\nicefrac{1}{2} + 6 \delta}_2(I; \mathbb{R}^2)}.
	\end{align*}
	Choosing $\beta = \nicefrac{1}{12} + \delta$, for an arbitrary $\nicefrac{1}{6} > \delta > 0$ ,
	we obtain
	\begin{align}
	D_A (\beta, 2) = \left(X, D(A) \right)_{\beta, 2} = \left\{u \in W_2^{\nicefrac12 + 6 \delta}\left(I; \mathbb{R}^2\right) : u_{|\partial I} = 0 \right\}
	\label{genauwie}
	\end{align}
	with equivalent norms. Here we used Proposition 2.2.2 in \protect{\cite{lunardi}} for the first equality and the combination of \eqref{chara} and Theorem in Section 4.3.3 of \protect{\cite{triebel}} for the second identity. 
	Consequently, we have
	\begin{align*}
	\|f(\epsilon) - f_0 \|_{C(\bar{I}; \mathbb{R}^2)} \leq  C \|v(\epsilon) - u_0\|_{ D_A (\beta, 2)}.
	\end{align*}
	Using the mild solution formula and the triangle inequality, we obtain
	\begin{align*}
	\|f(\epsilon) - f_0\|_{C(\bar{I}; \mathbb{R}^2)} 
	\leq C \left( \left\|\int_0^{\epsilon} -A e^{-sA} u_0 \dd s \right\|_{ D_A (\beta, 2)} + \left\|\int_0^{\epsilon} e^{-sA} h \dd s \right\|_{ D_A (\beta, 2)} \right) = C(I + II),
	\end{align*}
	where we used $e^{-\epsilon A}u_0 - u_0 = \int_0^{\epsilon} -A e^{-sA} u_0 \dd s$, cf.\ Proposition 2.1.4 (ii) in \protect{\cite{lunardi}}. For the first summand, we obtain by \eqref{luna} in case $n = 1$, $\beta = \nicefrac{1}{12} + \delta$, $\alpha = \tilde{\mu}- \nicefrac{1}{2}$
	\begin{align*}
	I 
	\leq \int_0^{\epsilon} \left\|A e^{-sA} \right\|_{L({D}_A (\alpha, 2), D_A (\beta, 2))} \| u_0\|_{ D_A (\alpha, 2)} \dd s 
	\leq \int_0^{\epsilon} \frac{C}{s^{1 - (\tilde{\mu}- \nicefrac{1}{2}) + (\nicefrac{1}{12} + \delta)} } \| u_0\|_{ D_A (\alpha, 2)} \dd s.
	\end{align*}
	Here it follows analogously to the explanation of \eqref{genauwie} that
	\begin{align*}
	&\left\{ u \in W_2^{4\left(\mu - \nicefrac{1}{2}\right)}(I; \mathbb{R}^2):     \; u_{|\partial I} = 0, \partial_x  u_{|\partial I} = 0  \right\} \\
	& \qquad = \left\{u \in W_2^{6(\tilde{\mu} - \nicefrac{1}{2})}(I; \mathbb{R}^2) : \; u_{|\partial I} = 0, \partial_x  u_{|\partial I} = 0 \right\} = D_A (\alpha, 2)
	\end{align*}
	with equivalent norms for $\mu \in \left(\nicefrac78, 1 \right]$ and $\tilde{\mu} \in (\nicefrac{3}{4}, \nicefrac{5}{6}]$ with $\tilde{\mu} = \nicefrac{2}{3} \mu + \nicefrac{1}{6}$, respectively. Since we have
	\begin{align*}
	\left(\tilde{\mu}- \frac{1}{2}\right) - \left(\frac{1}{12} + \delta \right) = \tilde{\mu}- \frac{7}{12} - \delta > \frac{3}{4}- \frac{7}{12} - \frac{1}{6} = 0 && \textrm{ for } 0 < \delta < \nicefrac{1}{6},
	\end{align*}
	thus $1 - (\tilde{\mu}- \nicefrac{1}{2}) + (\nicefrac{1}{12} + \delta) < 1 $,
	we can integrate with respect to $s$ and obtain
	\begin{align*}
	I \leq C \epsilon^{\tilde{\mu} - \frac{7}{12} - \delta} \| u_0\|_{W_2^{4\left(\mu - \nicefrac{1}{2}\right)}(I; \mathbb{R}^2)}.
	\end{align*}
	Using \eqref{lunaproof1} in the case $n = 0$, $\beta = \nicefrac{1}{12} + \delta$, $\alpha = 0$, we deduce for the second summand
	\begin{align*}
	II 
	\leq \int_0^{\epsilon} \left\| e^{-sA}\right\|_{L(X, D_A (\beta, 2))} \| h \|_{X} \dd s \leq \int_0^{\epsilon} \frac{C}{s^{\beta}} \| h \|_{X} \dd s 
	\leq C \epsilon^{\frac{11}{12} - \delta} \| h \|_{X}.
	\end{align*}
	In summary, we obtain
	by $\tilde{\mu}(\mu) = \nicefrac{2}{3} \mu + \nicefrac{1}{6}$
	\begin{align*}
	\|f(\epsilon) - f_0\|_{C(\bar{I}; \mathbb{R}^2)}\leq C \left(\epsilon^{\frac{2}{3} \mu - \frac{5}{12} - \delta}\| u_0\|_{W_2^{4\left(\mu - \nicefrac{1}{2}\right)}(I; \mathbb{R}^2)} +\epsilon^{\frac{11}{12} - \delta} \| h \|_{X}\right),
	\end{align*}
	where $\nicefrac{2}{3} \mu - \nicefrac{5}{12} - \delta > \nicefrac{2}{3} \cdot \nicefrac{7}{8} - \nicefrac{5}{12} - \delta =  \nicefrac{1}{6} - \delta > 0$ for $\mu \in \left(\nicefrac78, 1 \right]$. \\
	
For the proof of the inequalities \eqref{zweite} and \eqref{dritte} the same techniques are employed, see the proof of Lemma 6.3.1 in \cite{butzdiss} for details.
\end{proof}

\subsection{$f_{\epsilon}$ is a Reference Curve \label{feps}}
The technical estimates from the previous section enable us to apply the results of Subsection \ref{param} to the curves $f_{\epsilon} = f(\epsilon, \cdot )$,  $\epsilon>0$, derived in Section \ref{A}. \\

The main result reads as follows:
\begin{satz} [$f_{\epsilon}$ is a reference curve for $f_0$ provided $\epsilon$ is small enough] \label{mtfe}
	Let $f_0: \bar{I} \to \mathbb{R}^2$, $I := (0, 1)$, be parametrized proportional to arc length, let it be in $W_2^{4\left(\mu - \nicefrac{1}{2}\right)}(I; \mathbb{R}^2)$, $\mu \in \left(\frac{7}{8}, 1 \right]$, and let it fulfill the boundary conditions given in \eqref{initialdatum}.
Moreover, let $f(\epsilon, \cdot) := f_{\epsilon}: \bar{I} \to \mathbb{R}^2$, $\epsilon > 0$, be the smoothed curves generated by evolving $f_0$ by a parabolic equation, see Lemma \ref{lemrefcurve} and Lemma \ref{regular} in Subsection \ref{A}.\\
	Then there exists an $\tilde{\epsilon} > 0$, such that for $f_{\epsilon}$, $0 < \epsilon < \tilde{\epsilon}$, the conditions in Theorem \ref{refcurve} are fulfilled for the parameterizations $f_{\epsilon} \circ \beta_{\epsilon}$ and $f_0 \circ \beta_{\epsilon}$. Here, $\beta_{\epsilon}: \bar{I} \rightarrow \bar{I}$ is the orientation preserving reparametrization such that $f_{\epsilon} \circ \beta_{\epsilon}$ is parametrized proportional to arc length. In particular, $f_\epsilon$ is a reference curve for the initial curve $f_0$.
\end{satz}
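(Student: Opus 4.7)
The plan is to apply Theorem~\ref{refcurve} with reference curve $\Phi^*_\epsilon := f_\epsilon \circ \beta_\epsilon$ and initial curve $\tilde{f}_0 := f_0 \circ \beta_\epsilon$. Lemma~\ref{regular} gives $f_\epsilon \in C^5(\bar{I}; \mathbb{R}^2)$, and \eqref{dritte} shows $\|f_\epsilon - f_0\|_{C^1(\bar{I}; \mathbb{R}^2)} \to 0$ as $\epsilon \to 0$, since the exponent $\tfrac{2}{3}\mu - \tfrac{7}{12} - \delta$ is positive for $\mu \in (\tfrac{7}{8}, 1]$ and $\delta > 0$ sufficiently small. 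Consequently, $f_\epsilon$ is regular for all sufficiently small $\epsilon$, and the orientation-preserving arc-length reparametrization $\beta_\epsilon \in C^5(\bar{I}; \bar{I})$ exists and converges to the identity in $C^1$. The boundary conditions \eqref{bound1} for $\Phi^*_\epsilon$ follow from the boundary data prescribed in \eqref{sys}: the endpoint positions, tangent directions, and the vanishing second-order derivatives $\partial_x^2 f_\epsilon = 0$ at $\{0, 1\}$ yield, respectively, the axis condition, the $\alpha$-angle condition, and $\vec{\kappa}_{\Lambda_\epsilon} = 0$ at the boundary; these are intrinsic properties and are preserved under the smooth reparametrization $\beta_\epsilon$. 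An analogous argument provides \eqref{initialdatum} for $\tilde{f}_0$.

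Next, the behavior of the quantities entering the thresholds $\xi_0$ and $\xi_1$ in Theorem~\ref{refcurve} is analyzed. Because $f_\epsilon \to f_0$ in $C^1$ and $\mathcal{L}[f_0] > 0$, the length $\mathcal{L}[\Phi^*_\epsilon] = \mathcal{L}[f_\epsilon]$ is bounded and bounded below uniformly for $\epsilon$ small. The curvature $\|\kappa_{\Lambda_\epsilon}\|_{C([0, 1])}$ is controlled by $\|f_\epsilon\|_{C^2(\bar{I}; \mathbb{R}^2)} / \mathcal{L}[\Phi^*_\epsilon]^2$ up to bounded factors involving $\beta_\epsilon$ and $\partial_\sigma \beta_\epsilon$, but \eqref{zweite} only yields $\|f_\epsilon\|_{C^2} \leq C(\epsilon^{-3/4 + 2\mu/3 - \delta} + 1)$, an inverse power of $\epsilon$ on the range $\mu \in (\tfrac{7}{8}, 1]$. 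Thus the curvature may, a priori, degenerate as $\epsilon \to 0$.

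The main obstacle is therefore the $C^0$-closeness condition in Theorem~\ref{refcurve}: since $\xi_0 \sim \|\kappa_{\Lambda_\epsilon}\|_{C([0,1])}^{-1}$, what is needed is that $\|\tilde{f}_0 - \Phi^*_\epsilon\|_{C(\bar{I}; \mathbb{R}^2)} \cdot \|\kappa_{\Lambda_\epsilon}\|_{C([0,1])} \to 0$. Reparametrization by $\beta_\epsilon$ leaves the $C^0$-norm of a difference invariant and introduces at most a bounded multiplicative factor in the $C^1$-norm, so the estimates \eqref{erste}, \eqref{zweite}, \eqref{dritte} transfer directly to the differences $\tilde{f}_0 - \Phi^*_\epsilon$. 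Combining \eqref{erste} with the bound on the curvature coming from \eqref{zweite}, the decisive product is of order
\begin{equation*}
\epsilon^{(2\mu/3 - 5/12) + (2\mu/3 - 3/4) - 2\delta} \;=\; \epsilon^{4\mu/3 - 7/6 - 2\delta},
\end{equation*}
whose exponent is positive precisely when $\mu > \tfrac{7}{8}$ and $\delta > 0$ is chosen sufficiently small. This identifies $\mu > \tfrac{7}{8}$ as the natural regularity threshold for the argument. The $C^1$-condition is considerably easier, since $\xi_1$ scales with $\mathcal{L}[\Phi^*_\epsilon]$, which is bounded below; the bound \eqref{dritte}, combined with the uniform boundedness of $\|\partial_x \beta_\epsilon\|_{C^0}$, already furnishes a positive power of $\epsilon$. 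Choosing $\tilde{\epsilon} > 0$ so small that both products fall below the required thresholds of Theorem~\ref{refcurve} then completes the verification and yields the conclusion.
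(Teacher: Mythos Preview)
Your proposal is correct and follows essentially the same route as the paper: use \eqref{dritte} to handle the $C^1$-closeness condition (since $\xi_1$ only involves the bounded-below length $\mathcal{L}[\Phi^*_\epsilon]$), and then combine \eqref{erste} with the curvature blow-up rate from \eqref{zweite} to see that the product $\|f_0-f_\epsilon\|_{C^0}\cdot\|\kappa[f_\epsilon]\|_{C^0}$ carries the exponent $\tfrac{4}{3}\mu-\tfrac{7}{6}-2\delta>0$ for $\mu>\tfrac{7}{8}$, which is exactly the computation in the paper's Claim~\ref{cl1}. The only organizational difference is that the paper isolates the reparametrization step in the separate Lemma~\ref{refcurve3} (the ``initial curve perspective''), whereas you absorb this into the main argument by noting that $\beta_\epsilon$ preserves $C^0$-norms of differences and is $C^1$-bounded; the content is the same.
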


Note that it is not trivial that $f_\epsilon$ is a reference curve for the initial curve $f_0$ for some $\epsilon>0$: Although, by making $\epsilon$ smaller, we can diminish the $C^0$-distance between $f_0$ and $f_\epsilon$, see \eqref{erste}, but the curvature $\kappa$ of $f_\epsilon$ may explode, see \eqref{zweite}. Therefore, we have to be careful, since the bound on the $C^0$-distance is proportional to the reciprocal of the $C^0$-norm of the curvature of the reference curve, cf.\ \eqref{defd}. In order to compare those two effects, we give a formulation of  Lemma \ref{refcurve} in the "initial curve perspective", i.e.\ in the proportional-to-arc-length-parametrization on $[0, 1]$ of the initial curve. 
\begin{lem} \label{refcurve3}
	Let $f_0$ and $f_{\epsilon}$ be as in Theorem \ref{mtfe}. Then there exists an $\bar{\epsilon} > 0$ such that $f_\epsilon$, $0 < \epsilon < \bar{\epsilon}$, is a regular curve. Let $\lambda \in (0,1)$ be given such that the conditions \eqref{lam} and \eqref{lambdasmall} are fulfilled. If $f_{\epsilon} \in B^{C^0}_{\xi_0}(f_0)$ and $\partial_{\sigma} f_{\epsilon} \in B^{C^0}_{\xi_1}(\partial_{{\sigma}} f_0)$ for $0 < \epsilon < \bar{\epsilon}$ and
	\begin{align*}
	\xi_0 &= \min \left\{\overline{C_{\alpha}(\lambda)}, \frac{(\sin \alpha)^2}{2} \right\} \frac{1}{\|\kappa [f_\epsilon]\|_{C([0, 1])}}, \\
	\xi_1 &= \min \left\{\frac{\sqrt{(\cot \alpha)^2 +1} - |\cot \alpha|}{4}, 
	\left\{\begin{array}{c}
	\left.\begin{aligned}
	&\tfrac{1}{2(4 \cdot 144)^2 |\cot \alpha|^2} &\textrm{ for } \alpha \neq \tfrac{\pi}{2} \\
	& 1 &\textrm{ for } \alpha = \tfrac{\pi}{2} 
	\end{aligned}\right\}
	\end{array}\right. \hspace{-0,25cm} , \frac{\sin \alpha}{2} \right\} K(\bar{\epsilon}, f_0),
	\end{align*}
	where 
	\begin{align*}
	K(\bar{\epsilon}, f_0) : = \mathcal{L}[f_0] - C(\bar{\epsilon}) > 0
	\end{align*}
	for $C(\epsilon) \rightarrow 0$ monotonically for $\epsilon \rightarrow 0$, and $\overline{C_{\alpha}(\lambda)}$ is defined in Lemma \ref{refcurve}, then $f_\epsilon$ is a reference curve for the initial curve $f_0$.
\end{lem}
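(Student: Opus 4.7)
The plan is to reduce Lemma \ref{refcurve3} to Theorem \ref{refcurve} by taking $\Phi^* := f_\epsilon \circ \beta_\epsilon$ (which is proportional to arc length by construction) as the reference curve and applying the theorem to the reparametrized initial curve $\tilde f_0 := f_0 \circ \beta_\epsilon$. Since Definition \ref{refcurvedef}(i) already permits an arbitrary $C^1$-reparametrization $\varphi$ of the initial curve, any reference curve for $\tilde f_0$ is automatically one for $f_0$. The curve $\Phi^*$ is of class $C^5$ by Lemma \ref{regular}, and it inherits the boundary conditions \eqref{bound1} from the parabolic problem \eqref{sys} defining $f_\epsilon$: the contact-point condition from $f = f_0$ at $\partial I$, the $\alpha$-angle condition from $\partial_\sigma f = \partial_\sigma f_0$ at $\partial I$, and the vanishing-curvature condition from $\partial_\sigma^2 f = 0$ at $\partial I$.

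First I would use estimate \eqref{dritte} of Lemma \ref{absch} to obtain $\|f_\epsilon - f_0\|_{C^1(\bar I; \mathbb{R}^2)} \leq C(\epsilon)$ with $C(\epsilon) \to 0$ as $\epsilon \to 0$. Since $f_0$ is parametrized proportional to arc length, $|\partial_\sigma f_0| \equiv \mathcal{L}[f_0]$, so
\begin{align*}
\inf_{\bar I} |\partial_\sigma f_\epsilon| \geq \mathcal{L}[f_0] - C(\epsilon), \qquad \mathcal{L}[f_\epsilon] \geq \mathcal{L}[f_0] - C(\epsilon).
\end{align*}
Choosing $\bar\epsilon$ so that $C(\bar\epsilon) < \mathcal{L}[f_0]$ yields regularity of $f_\epsilon$ for $0 < \epsilon < \bar\epsilon$ and the uniform lower bound $K(\bar\epsilon, f_0) := \mathcal{L}[f_0] - C(\bar\epsilon) > 0$ on both $\inf |\partial_\sigma f_\epsilon|$ and $\mathcal{L}[f_\epsilon] = \mathcal{L}[\Phi^*]$, with $C(\bar\epsilon) \to 0$ monotonically as $\bar\epsilon \to 0$, as claimed.

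Next I would transfer the $C^0$- and $C^1$-hypotheses from the pair $(f_0, f_\epsilon)$ to the pair $(\tilde f_0, \Phi^*)$. The $C^0$-bound passes through unchanged since $\beta_\epsilon$ is a bijection of $\bar I$, and, combined with the geometric invariance $\|\kappa_\Lambda\|_{C^0} = \|\kappa[f_\epsilon]\|_{C^0}$, matches the $\xi_0$-threshold of Theorem \ref{refcurve}. For the derivative bound, the chain rule together with the identity $\beta_\epsilon'(\sigma) = \mathcal{L}[f_\epsilon]/|\partial_\sigma f_\epsilon(\beta_\epsilon(\sigma))|$ gives
\begin{align*}
\|\partial_\sigma \tilde f_0 - \partial_\sigma \Phi^*\|_{C^0} \leq \|\beta_\epsilon'\|_{C^0} \, \|\partial_\sigma f_0 - \partial_\sigma f_\epsilon\|_{C^0} \leq \frac{\mathcal{L}[f_\epsilon]}{K(\bar\epsilon, f_0)} \|\partial_\sigma f_0 - \partial_\sigma f_\epsilon\|_{C^0}.
\end{align*}

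The main technical point, and the reason $K(\bar\epsilon, f_0)$ appears in $\xi_1$ instead of $\mathcal{L}[f_\epsilon]$, is precisely this estimate: the threshold of Theorem \ref{refcurve} involves $\mathcal{L}[\Phi^*] = \mathcal{L}[f_\epsilon]$, which is a priori unknown, while the hypothesis in Lemma \ref{refcurve3} must be formulated in a quantity independent of $\epsilon$. The factor $\mathcal{L}[f_\epsilon]/K(\bar\epsilon, f_0)$ picked up from $\|\beta_\epsilon'\|_{C^0}$ exactly cancels the $K(\bar\epsilon, f_0)$ in the hypothesis on $\xi_1$, so the product falls strictly below the required threshold in Theorem \ref{refcurve}. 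With both conditions of Theorem \ref{refcurve} thereby verified, the theorem produces the reparametrization $\varphi$ and height function $\rho$ of Definition \ref{refcurvedef}, establishing that $f_\epsilon$ is a reference curve for the initial curve $f_0$.
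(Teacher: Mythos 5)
Your proposal is correct and follows essentially the same route as the paper: reduce to Theorem \ref{refcurve} for the pair $(f_0\circ\beta_\epsilon,\,f_\epsilon\circ\beta_\epsilon)$, use estimate \eqref{dritte} of Lemma \ref{absch} to obtain the uniform lower bound $K(\bar\epsilon,f_0)$ on $|\partial_\sigma f_\epsilon|$ and $\mathcal{L}[f_\epsilon]$ (hence regularity), and observe that the $\xi_1$-hypothesis phrased with $K(\bar\epsilon,f_0)$ dominates the threshold $\tilde\xi_1\,\mathcal{L}[\Phi^*]$ required by the theorem. The only cosmetic difference is that you carry the factor $\|\beta_\epsilon'\|_{C^0}\le \mathcal{L}[f_\epsilon]/K(\bar\epsilon,f_0)$ through the chain rule and let it cancel, whereas the paper divides the reparametrized inequality by $\beta_\epsilon'>0$ first; the arithmetic is identical.
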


\begin{proof}
	The strategy is to go through the proof of Theorem \ref{refcurve}, to reparametrize the inequalities and to replace the conditions. We recall that reparametrization of the condition for $f_{\epsilon}$ and $f_0$ does not affect the radius $\xi_0$. Furthermore, we set
	\begin{align*}
	\widetilde{\xi_1} &:= \min \left\{\frac{\sqrt{(\cot \alpha)^2 +1} - |\cot \alpha|}{4}, 
	\left\{\begin{array}{c}
	\left.\begin{aligned}
	&\tfrac{1}{2(4 \cdot 144)^2 |\cot \alpha|^2} &\textrm{ for } \alpha \neq \tfrac{\pi}{2} \\
	& 1 &\textrm{ for } \alpha = \tfrac{\pi}{2} 
	\end{aligned}\right\}
	\end{array}\right. \hspace{-0,25cm} , \frac{\sin \alpha}{2} \right\}
	\end{align*}
	Then, the condition for the derivatives of $f_{\epsilon}$ and $f_0$ is given by
	\begin{align*}
	|\partial_{\sigma} (f_0 \circ \beta_{\epsilon}) (\sigma) - \partial_{\sigma} (f_{\epsilon}\circ \beta_{\epsilon}) (\sigma) | &< \widetilde{\xi_1} |\partial_{\sigma} (f_{\epsilon}\circ \beta_{\epsilon}) (\sigma)|,
	\end{align*}
	supposed $\beta_{\epsilon}: \bar{I} \rightarrow \bar{I}$ is a regular orientation preserving reparametrization such that $f_{\epsilon} \circ \beta_{\epsilon}$ is parametrized proportional to arc length. By chain rule, this is equivalent to 
	\begin{align}
	|\partial_{\sigma} f_0  (\beta_{\epsilon}(\sigma)) - \partial_{\sigma} f_{\epsilon}(\beta_{\epsilon}(\sigma)) | &< \widetilde{\xi_1} |\partial_{\sigma} f_{\epsilon} (\beta_{\epsilon}(\sigma))|,
	\label{neu}
	\end{align}
	as $\beta_{\epsilon}'(\sigma) > 0$ for $\sigma \in [0, 1]$. Thus, it remains to prove that $f_{\epsilon}$ is a regular curve and that there exists a uniform lower bound for $|\partial_{\sigma} f_{\epsilon}|$ for every $0 < \epsilon < \bar{\epsilon}$.
	
	\begin{claim}
		There exists an $\bar{\epsilon} > 0$ such that for every $0 < \epsilon < \bar{\epsilon}$ the following holds true:
		The function $f_{\epsilon}(\cdot)$ is regular with the bound
		\begin{align}
		K(\bar{\epsilon}, f_0) \leq |\partial_\sigma f_{\epsilon}(\sigma)|
		\label{derbound}
		\end{align}
		and there exists a uniform lower bound on $\mathcal{L}[f_{\epsilon}]$
		\begin{align}
		K(\bar{\epsilon}, f_0) \leq \mathcal{L}[f_{\epsilon}],
		\label{lengthbound}
		\end{align}
		where $K(\bar{\epsilon}, f_0) : = \mathcal{L}[f_0] - C(\bar{\epsilon}) > 0$ with $C(\epsilon) \rightarrow 0$ as $\epsilon \rightarrow 0$.
	\end{claim}
	
	\begin{proof}[Proof of the Claim:]
		We want to use estimate \eqref{dritte} in Lemma \ref{absch}: For $\mu \in (\nicefrac{7}{8}, 1]$ it holds $\nicefrac{2}{3} \mu - \nicefrac{7}{12} > 0$
		thus, the first summand of the right-hand side in \eqref{dritte} has a positive $\epsilon$ power if $\delta$ is chosen sufficiently small. This implies that 
		\begin{align*}
		\mathcal{L}[f_0] - C(\epsilon) = \min_{\sigma \in [0, 1]} |\partial_\sigma f_0 (\sigma)| - C(\epsilon) \leq \min_{\sigma \in [0, 1]} |\partial_\sigma f_{\epsilon}(\sigma)|
		\end{align*}
		with a $C(\epsilon) \rightarrow 0$ monotonically as $\epsilon \rightarrow 0$. Choosing $\bar{\epsilon}$ sufficiently small, we obtain
		\begin{align*}
		0 < \mathcal{L}[f_0] - C(\bar{\epsilon}) \leq \mathcal{L}[f_0] - C(\epsilon) \leq \min_{\sigma \in [0, 1]} |\partial_\sigma f_{\epsilon}(\sigma)| \leq |\partial_\sigma f_{\epsilon}(\sigma)|
		\end{align*}
		for $\sigma \in [0, 1]$ and all $0 < \epsilon < \bar{\epsilon}$, which shows the estimate \eqref{derbound} and that the parametrization is regular. Integrating the last inequality with respect to the parameter $\sigma$ over $[0, 1]$, we deduce estimate \eqref{lengthbound}
		\begin{align*}
		0 < \mathcal{L}[f_0] - C(\bar{\epsilon}) = \int_{[0, 1]} \mathcal{L}[f_0] - C(\bar{\epsilon}) \dd \sigma \leq \int_{[0, 1]} |\partial_\sigma f_{\epsilon}(\sigma)| \dd \sigma = \mathcal{L}[f_{\epsilon}] 
		\end{align*}
		for all $0 < \epsilon < \bar{\epsilon}$. 
	\end{proof}
	Thus, the condition on $\xi_1$ in Lemma \ref{refcurve3}
	is stronger than \eqref{neu} and the lemma is proven.
\end{proof}

We proceed with the proof of Theorem \ref{mtfe}:
\begin{proof}[Proof of Theorem \ref{mtfe}]
	By estimate \eqref{dritte}, we see that $\|f(\epsilon) - f_0\|_{C^1(\bar{I}; \mathbb{R}^2)} \rightarrow 0$ as $\epsilon \rightarrow 0$. Thus, by choosing $\bar{\epsilon}$ small enough, the conditions on the derivatives of $f_{\epsilon}$ and $f_0$ in Lemma \ref{refcurve3} are fulfilled for $0 < \epsilon < \bar{\epsilon}$. It just remains to show the following claim:
	\begin{claim} \label{cl1}
		Let $\lambda \in (0,1)$ be given such that the conditions \eqref{lam} and \eqref{lambdasmall} are fulfilled. Then there exists an $\tilde{\epsilon}$ with $0 < \tilde{\epsilon} \leq \bar{\epsilon}$, such that $f_{\epsilon} \in B^{C^0}_{\xi_0}(f_0)$ for all $0 < \epsilon < \tilde{\epsilon}$, where $\xi_0$ is as in Lemma \ref{refcurve3}.
	\end{claim}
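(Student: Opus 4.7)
The plan is to show that for $\epsilon$ sufficiently small the product $\|f_\epsilon - f_0\|_{C(\bar I; \mathbb{R}^2)} \cdot \|\kappa[f_\epsilon]\|_{C([0,1])}$ becomes strictly less than $\min\{\overline{C_\alpha(\lambda)}, (\sin\alpha)^2/2\}$; in view of the definition of $\xi_0$ in Lemma \ref{refcurve3}, this is exactly the desired inequality $\|f_\epsilon - f_0\|_{C(\bar I; \mathbb{R}^2)} < \xi_0$. The crucial point, as anticipated by the authors, is that $\xi_0$ itself shrinks as $\|\kappa[f_\epsilon]\|_{C([0,1])}$ possibly blows up when $\epsilon \to 0$, so one must balance two $\epsilon$-powers supplied by Lemma \ref{absch}.

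First, from the pointwise identity $|\kappa[f_\epsilon](\sigma)| \leq |\partial_\sigma^2 f_\epsilon(\sigma)|/|\partial_\sigma f_\epsilon(\sigma)|^2$ together with the uniform lower bound \eqref{derbound} and the $C^2$-estimate \eqref{zweite}, one obtains
\begin{align*}
\|\kappa[f_\epsilon]\|_{C([0,1])} \leq \frac{\|f_\epsilon\|_{C^2(\bar I; \mathbb{R}^2)}}{K(\bar\epsilon, f_0)^2} \leq C \epsilon^{-3/4 + 2\mu/3 - \delta} + C'
\end{align*}
for $0 < \epsilon < \bar\epsilon$ and $\delta > 0$ arbitrarily small, where $C, C'$ depend on $f_0, \xi, h$ and $\bar\epsilon$ but not on $\epsilon$. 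Multiplying with the $C^0$-estimate \eqref{erste} produces four terms, each of which carries a strictly positive power of $\epsilon$ provided $\delta$ is taken small enough and $\mu > 7/8$. The leading exponent arises from the pairing of the two singular factors and equals
\begin{align*}
\left(\tfrac{2\mu}{3} - \tfrac{5}{12}\right) + \left(-\tfrac{3}{4} + \tfrac{2\mu}{3}\right) - 2\delta = \tfrac{4\mu}{3} - \tfrac{7}{6} - 2\delta = \tfrac{4}{3}\left(\mu - \tfrac{7}{8}\right) - 2\delta,
\end{align*}
which is positive precisely thanks to the strict inequality $\mu > 7/8$. The remaining three exponents are at least $1/6 - \delta$, $3/4 - 2\delta$ and $11/12 - \delta$, all clearly positive.

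Consequently, the product $\|f_\epsilon - f_0\|_{C(\bar I; \mathbb{R}^2)} \cdot \|\kappa[f_\epsilon]\|_{C([0,1])}$ tends to zero as $\epsilon \to 0$, and one may pick $\tilde\epsilon \in (0, \bar\epsilon]$ so small that this product stays below $\min\{\overline{C_\alpha(\lambda)}, (\sin\alpha)^2/2\}$ for every $\epsilon \in (0, \tilde\epsilon)$. This is equivalent to $f_\epsilon \in B^{C^0}_{\xi_0}(f_0)$ for all such $\epsilon$ and closes Claim \ref{cl1}. The main subtlety is exactly the competition between the possible curvature blow-up and the $C^0$-convergence of $f_\epsilon$ to $f_0$; the regularity assumption $\mu \in (7/8, 1]$ on the initial data is tailored so that the decay wins.
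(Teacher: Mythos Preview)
Your argument is correct and follows essentially the same route as the paper: both reduce the claim to showing that the product of the $C^0$-distance estimate \eqref{erste} and the curvature bound coming from \eqref{zweite} carries a strictly positive power of $\epsilon$, and both identify the critical exponent as $\tfrac{4}{3}\mu - \tfrac{7}{6} - 2\delta > 0$ for $\mu > \tfrac{7}{8}$. The only difference is cosmetic: you make the passage from $\|f_\epsilon\|_{C^2}$ to $\|\kappa[f_\epsilon]\|_{C}$ explicit via the pointwise inequality $|\kappa[f_\epsilon]| \leq |\partial_\sigma^2 f_\epsilon|/|\partial_\sigma f_\epsilon|^2$ together with the lower bound \eqref{derbound}, while the paper simply asserts a ``rearranged version'' of \eqref{zweite}; and you absorb the two bounded terms from \eqref{zweite} into a single constant $C'$, whereas the paper keeps all three terms and then multiplies out the resulting $2\times 3$ products.
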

	
	\begin{proof}[Proof of the claim:]
		We have to show that there exists an $\tilde{\epsilon}$, such that
		\begin{align*}
		\|f_0 - f_{\epsilon}\|_{C ([0,1]; \mathbb{R}^2)} < \min \left\{\overline{C_{\alpha}(\lambda)}, \frac{(\sin \alpha)^2}{2} \right\} \frac{1}{\|\kappa [f_{\epsilon}]\|_{C ([0,1])}}
		\end{align*}
		for $0 < \epsilon < \tilde{\epsilon}$.
		To this end, we use the estimate \eqref{erste}, i.e.\
		\begin{align*}
		\|f_0 - f_{\epsilon}\|_{{C ([0,1]; \mathbb{R}^2)}} \leq C_1 \left(\epsilon^{\frac{2}{3} \mu - \frac{5}{12} - \delta_1}\| u_0\|_{W_2^{4\left(\mu - \nicefrac{1}{2}\right)}(I; \mathbb{R}^2)} +\epsilon^{\frac{11}{12} - \delta_1} \| h \|_{X} \right)
		\end{align*}
		for $0 < \delta < \frac{1}{6}$ and
		a rearranged version of \eqref{zweite} given by
		\begin{align*}
		\frac{1}{\|\kappa [f_{\epsilon}]\|_{{C ([0,1]; \mathbb{R}^2)}}} \geq C_2^{-1} \left({\epsilon^{-\frac{3}{4} + \frac{2}{3} \mu - \delta_2}}\|u_0\|_{W_2^{4\left(\mu - \nicefrac{1}{2}\right)}(I; \mathbb{R}^2)} +  \epsilon^{\frac{7}{12} - \delta_2} \| h \|_{X} + \|\xi\|_{C^2(I; \mathbb{R}^2)} \right)^{-1},
		\end{align*}
		where the $\delta_2>0$ is a sufficiently small number and the norms are finite and do not depend on $\epsilon$. Thus, it is enough to show that there exists an $\tilde{\epsilon} \in (0, \bar{\epsilon}]$ such that the inequality
		\begin{align}
		\left( \epsilon^{\frac{2}{3} \mu - \frac{5}{12} - \delta_1} \right. & \left. \| u_0\|_{W_2^{4\left(\mu - \nicefrac{1}{2}\right)}(I; \mathbb{R}^2)} +\epsilon^{\frac{11}{12} - \delta_1} \| h \|_{X} \right) \nonumber \\ 
		&\times \left( {\epsilon^{-\frac{3}{4} + \frac{2}{3} \mu - \delta_2}} \|u_0\|_{W_2^{4\left(\mu - \nicefrac{1}{2}\right)}(I; \mathbb{R}^2)} +  \epsilon^{\frac{7}{12} - \delta_2} \| h \|_{X} + \|\xi\|_{C^2(I; \mathbb{R}^2)} \right) < C
		\label{absch1}
		\end{align}
		is fulfilled for each $0 < \epsilon < \tilde{\epsilon}$, where $C:= C_1^{-1} \min \left\{\overline{C_{\alpha}(\lambda)}, \nicefrac{(\sin \alpha)^2}{2} \right\} C_2^{-1}$. Direct calculations show that the powers in $\epsilon$ of the first factor are both positive, and that $\nicefrac{2}{3} \mu - \nicefrac{5}{12} - \delta_1$ is the smaller than one. Concerning the second factor, the first summand is the only critical one, as its power in $\epsilon$ is negative for $\mu \in (\nicefrac{7}{8},1]$. We want to make sure that the product of these worst factors has in total a positive power. Thus, we calculate $\nicefrac{2}{3} \mu - \nicefrac{5}{12} - (\nicefrac{3}{4} - \nicefrac{2}{3} \mu ) > 0 $
		and deduce the existence of $\delta_1, \delta_2 > 0$, such that
		\begin{align*}
		\frac{2}{3} \mu - \frac{5}{12} - \delta_1 - \left(\frac{3}{4} - \frac{2}{3} \mu + \delta_2 \right) > 0.
		\end{align*}
		This implies that the smallest power of $\epsilon$ of the summands on the left-hand side of \eqref{absch1} is positive for a $\mu \in (\nicefrac{7}{8},1]$ provided $\delta_1, \delta_2 > 0$ are sufficiently small. Consequently, there exists $\tilde{\epsilon} >0$ such that the inequality \eqref{absch1} is fulfilled for each $0 < \epsilon < \tilde{\epsilon}$. 
	\end{proof}
	By Claim \ref{cl1}, it follows that both ball conditions in Lemma \ref{refcurve3} are fulfilled for $0 < \epsilon < \bar{\epsilon}$, if $\bar{\epsilon} > 0$ is chosen small enough. Since the conditions in Lemma \ref{refcurve3} are stronger than the ones from Theorem \ref{refcurve}, the latter are also satisfied. It is a direct consequence that $f_\epsilon$ is a reference curve for the initial curve $f_0$.
\end{proof}

This enables us to prove the theorem on local well-posedness for a fixed initial curve.
\begin{proof}[Proof of Theorem \ref{localbetter}]
	Let $f_0: \bar{I} \to \mathbb{R}^2$, $I := (0, 1)$ fulfill the assumption of Theorem \ref{localbetter}. Then, we obtain by Theorem \ref{mtfe} a reference curve $\Phi^* = f_{\epsilon} \circ \beta: [0, 1] \rightarrow \mathbb{R}^2$, which is parametrized proportional to arc length, and a corresponding initial height function $\rho_0$, which fulfill the conditions given in Definition \ref{refcurvedef}. In particular, there exists a regular $C^1$-reparametrization $\varphi: [0, 1] \rightarrow [0,1]$ and a function $\rho_0: [0, 1] \rightarrow (-d, d)$ in $W_2^{4\left(\mu - \nicefrac{1}{2}\right)}(I)$, $\mu \in \left(\nicefrac{7}{8}, 1 \right]$, such that 
	\begin{align*}
	f_0(\varphi(\sigma)) = \Phi^*(\sigma) + \rho_0(\sigma) (n_{\Lambda}(\sigma) + {\cot{\alpha}} \eta(\sigma) \tau_{\Lambda} (\sigma)), 
	\end{align*}
	where $\Lambda = \Phi^*([0, 1])$. By Theorem \ref{local} 
	%and Corollary \ref{orig}
	, we obtain a strong solution $(t, \sigma) \mapsto \Psi(\sigma, \rho(t, \sigma))$ to \eqref{0}-\eqref{3blow} with initial curve $\Psi(\cdot, \rho_0(\cdot))$.
	By construction, compare the coordinates \eqref{curvilin} to the formula \eqref{para},
	we observe that $f_0(\varphi(\cdot)) = \Psi(\cdot, \rho_0(\cdot))$. This shows the existence of a solution. 
\end{proof}

\section{The Proof of the Blow-up Criterion Theorem \ref{c} \label{main}}

The proof is done in three steps. \\
	
	\hspace{-0.35cm}\textit{\uline{Step 1:} $W^2_2$-bound for the reparametrized and translated solution \\}
	Conversely, we assume that there exists a sequence in time $(t_l)_{l \in \mathbb{N}}$ with $t_l \to T_{max}$ as $l \to \infty$, such that $\kappa[f(t_l)]: [0, \mathcal{L}[f(t_l)]] \rightarrow \mathbb{R}$ satisfies an $L_2$-bound with respect to the arc length, which is uniform in $l \in \mathbb{N}$, i.e.\
	\begin{align}
	\left\| \kappa[f(t_l)]\right\|_{L_2(0, \mathcal{L}[f(t_l)])} \leq C \qquad \textrm{ for all } l \in \mathbb{N}.
	\label{kb}
	\end{align} 
	Here, $f: [0, T_{max}) \times \bar{I} \rightarrow \mathbb{R}^2$, $I = (0, 1)$, $T_{max} < \infty$, is a maximal solution of \eqref{0}--\eqref{3blow}, which is given by assumption.
	Let
	\begin{align*}
	\bar{I} \ni s \mapsto \sigma_l(s) \in \bar{I}
	\end{align*}
	be the orientation preserving reparametrization such that $f(t_l, \sigma_l(s)): \bar{I} \rightarrow \mathbb{R}^2$ is parame\-trized proportional to arc length.
	Then, we denote by 
	\begin{align*}
	\tau (t_l, \sigma_l(s)) := \frac{\partial_s f(t_l, \sigma_l(s))}{\mathcal{L}[f(t_l)]} && \textrm{ and }&& \vec{\kappa}[f(t_l)](\sigma_l(s)) := \frac{\partial^2_s f(t_l, \sigma_l(s))}{(\mathcal{L}[f(t_l)])^2}
	\end{align*}
	the tangent vector and the curvature vector of $f(t_l, \bar{I})$ at $f(t_l, \sigma(s))$, respectively. Now, we define 
	\begin{align*}
	\tilde{f}(t_l, \cdot): \bar{I}  \rightarrow \mathbb{R}^2,\; s \mapsto \tilde{f}(t_l, s) := \mathcal{L}[f(t_l)] \left(\int_{0}^{s} \int_{0}^{\tilde{s}} \vec{\kappa}[f(t_l)](\sigma_l(y)) \mathcal{L}[f(t_l)] \dd y + \tau(t_l, 0) \dd \tilde{s}\right).
	\end{align*}
	It follows by direct calculations that $s \mapsto \tilde{f}(t_l, s)$ is -- up to translation by $f(t_l, 0)$ -- for each $l \in \mathbb{N}$ the orientation preserving reparametrization of $f(t_l, \cdot)$ on $\bar{I}$ which is proportional to arc length. Moreover, we deduce the bounds
	\begin{align*}
	\left\|[s \mapsto \partial_{s} \tilde{f}(t_l, s) ] \right\|_{L_2(I; \mathbb{R}^2)} &= \mathcal{L}[f(t_l)]^\frac{1}{2}, \nonumber \\
	\left\|[s \mapsto \partial^2_{s} \tilde{f}(t_l, s) ] \right\|_{L_2(I; \mathbb{R}^2)} 
	&=  \mathcal{L}[f(t_l)]^\frac{3}{2} \left\| \kappa[f(t_l)]\right\|_{L_2(0, \mathcal{L}[f(t_l)])}. 
	%\label{rest}
	\end{align*}
	By enlarging the domain of integration, a change of variables with $z = \mathcal{L}[f(t_l)] y$, and Hölder's inequality, we obtain
	\begin{align*}
	\Bigg\|\Bigg[ s \mapsto \int_{0}^{s} & \int_{0}^{\tilde{s}} \vec{\kappa}[f(t_l)](\sigma_l(y)) \mathcal{L}[f(t_l)] \dd y \dd \tilde{s} \Bigg] \Bigg\|_{L_2(I; \mathbb{R}^2)} 
	\leq \mathcal{L}[f(t_l)]^\frac{1}{2} \left\| \kappa[f(t_l)]\right\|_{L_2(0, \mathcal{L}[f(t_l)])}.
	\end{align*}
	Additionally, we deduce
	\begin{align*}
	\left\|\left[ {s} \mapsto \int_{0}^{{s}} \tau(t_l, 0) \dd \tilde{s} \right] \right\|_{L_2(I; \mathbb{R}^2)} \leq 1. 
	%\label{komb1}
	\end{align*}
	Combining these bounds with the bound 
	in Remark \ref{lengthbounded}, we obtain
	\begin{align}
	\| \tilde{f}(t_l, \cdot) \|_{W^2_2(I; \mathbb{R}^2)} \leq C_* && \textrm{ for each } l \in \mathbb{N},
	\label{w2bound}
	\end{align} 
	where we used the uniform in time bound \eqref{kb}. \\
	
	\hspace{-0.35cm}\textit{\uline{Step 2:} Restarting the flow for translated initial data\\}
	We set $\tilde{f}_l :=  \tilde{f}(t_l, \cdot)$. The bound \eqref{w2bound} implies,
	\begin{align}
	\big\|\tilde{f}_l \big\|_{W^2_2\left(I; \mathbb{R}^2 \right)} \leq C_* &&\textrm{ for all } l \in \mathbb\mathbb{N}.
	\label{boundf}
	\end{align}
	Thus, we observe that by
	\begin{align*}
	M := \left\{\tilde{f}_l : \; l \in \mathbb{N} \right\},
	\end{align*} 
	we have a bounded set in $W^2_2(I; \mathbb{R}^2)$. By combining the statements in Theorem 2 (b) in Section 1.16.4 and Theorem 1 in 4.3.1, both in \protect{\cite{triebel}}, with Theorem 10.9 (2) in \protect{\cite{alt}}, we have the compact embedding
	\begin{align}
	W^2_2 \left(I; \mathbb{R}^2 \right) \hookrightarrow W^\gamma_2\left(I; \mathbb{R}^2 \right) && \textrm{ for } \gamma < 2. \label{compe}
	\end{align}
	Consequently, the set $M$ is precompact in $W^\gamma_2\left(I; \mathbb{R}^2 \right)$. Note, that for a fixed $\gamma \in (\nicefrac{3}{2}, 2)$ we find a $\mu \in (\nicefrac{7}{8}, 1)$ such that $\gamma = 4(\mu - \nicefrac{1}{2})$. \\ 
	
	In the following, we want to find a covering for the closure of $M$ with respect to $\|\cdot\|_{W^\gamma_2(I; \mathbb{R}^2)}$. By Theorem \ref{mtfe}, there exists for each $\tilde{f}_l $ a reference curve $\Phi^*_l: \bar{I} = [0, 1] \rightarrow \mathbb{R}^2$ with the following properties:
	\begin{itemize}
		\item $\Phi^*_l$ is a regular curve and in $C^5 (\bar{I}; \mathbb{R}^2)$, see Lemma \ref{regular}.
		\item $\Phi^*_l$ fulfills \eqref{bound1},
		where $\Lambda_l = \Phi^*_l(\bar{I})$.
		\item Let $\beta_l: \bar{I} \rightarrow \bar{I}$ be the orientation preserving reparametrization such that $\Phi^*_l \circ \beta_l$ is parametrized proportional to arc length. Then, it holds that
		\begin{align*}
		\|\tilde{f}_l\circ \beta_l  - \Phi^*_l \circ \beta_l\|_{C^0 (\bar{I}; \mathbb{R}^2)} < \xi_{0, l},\\ 
		\|\partial_{{\sigma}} (\tilde{f}_l\circ \beta_l)  - \partial_{{\sigma}} (\Phi^*_l \circ \beta_l)\|_{C^0 (\bar{I}; \mathbb{R}^2)} < \xi_{1, l},
		\end{align*}
		where $\xi_{0, l}$, $\xi_{1, l}$ are defined as $\xi_{0}$, $\xi_{1}$ in Theorem \ref{refcurve} with respect to $\Phi^*_l \circ \beta_l$ instead of $\Phi^*_l$.
	\end{itemize}
	Now, we set 
	\begin{align*}
	\delta_{0, l} &:= \xi_{0, l}- \|\tilde{f}_l\circ \beta_l  - \Phi^*_l \circ \beta_l\|_{C^0 (\bar{I}; \mathbb{R}^2)} > 0,\\
	\delta_{1, l} &:= \xi_{1, l}- \|\partial_{{\sigma}} (\tilde{f}_l\circ \beta_l)  - \partial_{{\sigma}} (\Phi^*_l \circ \beta_l)\|_{C^0 (\bar{I}; \mathbb{R}^2)} > 0,
	\end{align*} 
	and consider the balls $B^{W_2^\gamma(I; \mathbb{R}^2)}(\tilde{f}_l \circ \beta_l , \nicefrac{\min_{i=1, 2} \delta_{i, l}}{2 C})$, which are balls in $W^\gamma_2(I; \mathbb{R}^2)$ around $\tilde{f}_l \circ \beta_l$ with radius $\nicefrac{\min_{i=1, 2} \delta_{i, l}}{2 C}$. Here, the constant denoted by $C$ is the operator norm of the embedding $	i: W_2^\gamma (I; \mathbb{R}^2) \hookrightarrow C^1 (\bar{I}; \mathbb{R}^2 ).$
	We can cover $\overline{M}^{W^\gamma_2(I; \mathbb{R}^2)}$ by the union of all these balls.
	By compactness, there exists a finite set $S \subset \mathbb{N}$ such that it holds
	\begin{align*}
	\overline{M}^{W^\gamma_2(I; \mathbb{R}^2)} \subset \bigcup_{l \in S} B^{W^\gamma_2(I; \mathbb{R}^2)}\left( \tilde{f}_l \circ \beta_l , \frac{\min_{i=1, 2} \delta_{i, l}}{2 C} \right).
	\end{align*}
	Therefore, for each $k \in \mathbb{N}$ there exists an $l \in S$ with $\tilde{f}_k \circ \beta_k \in B^{W^\gamma_2(I; \mathbb{R}^2)}(\tilde{f}_l \circ \beta_l , \nicefrac{\min_{i=1, 2} \delta_{i, l}}{2 C})$ again. Consequently, we have
	\begin{align*}
	\left\|\tilde{f}_k \circ \beta_k - \tilde{f}_l \circ \beta_l \right\|_{C (\bar{I}; \mathbb{R}^2)} &< C \left\|\tilde{f}_k \circ \beta_k - \tilde{f}_l \circ \beta_l \right\|_{W^\gamma_2(I; \mathbb{R}^2)} < \frac{\delta_{0, l}}{2} \\
	\left\|\partial_{{\sigma}} (\tilde{f}_k\circ \beta_k)  - \partial_{{\sigma}} (\tilde{f}_l \circ \beta_l) \right\|_{C (\bar{I}; \mathbb{R}^2)} & < \frac{\delta_{1, l}}{2}.
	\end{align*}
	These estimates imply by adding a zero
	\small
	\begin{align*}
	\big\| \tilde{f}_k \circ \beta_k - \Phi^*_l \circ \beta_l \big\|_{C (\bar{I}; \mathbb{R}^2)} 
	&\leq \frac{1}{2} \left(\xi_{0, l} - \left\|\tilde{f}_l \circ \beta_l - \Phi^*_l \circ \beta_l \right\|_{C (\bar{I}; \mathbb{R}^2)}\right) + \left\|\tilde{f}_l\circ \beta_l  - \Phi^*_l \circ \beta_l \right\|_{C (\bar{I}; \mathbb{R}^2)}\\
	&\leq \frac{1}{2} \left(\xi_{0, l} + \left\|\tilde{f}_l\circ \beta_l  - \Phi^*_l \circ \beta_l \right\|_{C (\bar{I}; \mathbb{R}^2)} \right) < \xi_{0, l} \\
	\normalsize
	\Big\|\partial_{{\sigma}} (\tilde{f}_l \circ \beta_l)  &- \partial_{{\sigma}} (\Phi^*_l \circ \beta_l) \Big\|_{C (\bar{I}; \mathbb{R}^2)}
	< \xi_{1, l}.
	\end{align*}
	The combination of the established inequalities with Theorem \ref{refcurve} shows that $\Phi^*_l \circ \beta_l$ is a reference curve for the initial curve $\tilde{f}_k \circ \beta_k$: There exists a regular $C^1$-reparametrization $\varphi_k: \bar{I} \rightarrow \bar{I}$ and a function $\rho_{k,0}: \bar{I} \rightarrow (-d, d)$ of class $C^1$, such that 
	\begin{align}
	\tilde{f}_k \circ \beta_k (\varphi_k(\sigma)) = \Phi^*_l \circ \beta_l(\sigma) + \rho_{k,0}(\sigma) (n_{\Lambda_l}(\sigma) + {\cot{\alpha}} \eta(\sigma) \tau_{\Lambda_l} (\sigma)), && \textrm{ for } \sigma \in \bar{I}
	\label{dars}
	\end{align}
	where $\Lambda_l = \Phi^*_l \circ \beta_l(\bar{I})$. Moreover, by Condition \ref{condi2} in Definition \ref{refcurvedef}, $\rho_{k,0}$ satisfies the bounds \eqref{small} and the bound
	\begin{align}
	\|\rho_{k,0}\|_{W_2^{\gamma}(I)} \leq C\left(\alpha, \Phi^*_l \circ \beta_l, \eta, \big\|\tilde{f}_k \big\|_{W_2^{\gamma}(I; \mathbb{R}^2)}\right), 
	\label{inibd}
	\end{align}
	which are required in the short time existence result Theorem \ref{local}. Consequently, we obtain by \eqref{boundf} and the fact that $S$ is a finite set that
	\begin{align*}
	\|\rho_{k,0}\|_{W_2^{\gamma}(I)} \leq \max_{l \in S} C\left(\alpha, \Phi^*_l \circ \beta_l, \eta, C_* \right) && \textrm{ for all } k \in \mathbb{N}.
	\end{align*}
	We note that the time of existence $T$ in Theorem \ref{local} is determined by $\alpha$, the reference curve $\Phi^*_l \circ \beta_l$, the coordinates $\eta$, and the constants $R_1$ and $R_2$, where
	\begin{align*}
	\|\rho_0\|_{X_{\mu}} \leq R_1 \; \textrm{ and } \; \left\|\mathcal{L}^{-1} \right\| \leq R_2.
	\end{align*}
	We recall that $\mathcal{L}^{-1}$ depends on the reference curve $\Phi_l^* \circ \beta_l$ and also on the initial curve $\rho_{k, 0}$. Since we only need finitely many reference curves to be able to represent the initial curves, it just remains to prove that for each reference curve $\Phi_l^* \circ \beta_l$ there exists a constant $C>0$ such that
	\begin{align}
	\left\|\mathcal{L}^{-1}  (\rho_{k, 0}) \right\|< C
	\label{claim}
	\end{align}
	for each $\rho_{k, 0} \in W^\gamma_2(I)$, which corresponds to an $\tilde{f}_k \circ \beta_k \in B^{W^\gamma_2(I; \mathbb{R}^2)}(\tilde{f}_l \circ \beta_l , \nicefrac{\min_{i=1, 2} \delta_{i, l}}{2 C})$. In the following, the set of those $\rho_{k, 0} \in W^\gamma_2(I)$ is denoted by $M_l$. By the compact embedding 
	\begin{align*}
	W^\gamma_2(I) \hookrightarrow W^{\bar{\gamma}}_2(I) && \textrm{ for } \gamma > \bar{\gamma} > \nicefrac{3}{2}, 
	\end{align*} 
	which is proven like in \eqref{compe}, we observe that the set $\overline{M_l}^{W^{\bar{\gamma}}_2(I)}$ is compact in $W^{\bar{\gamma}}_2(I)$. By direct calculations, it follows that the operator $\mathcal{L}(\rho_{k, 0})$ depends continuously on $\rho_{k, 0} \in W^{\bar{\gamma}}_2(I)$ fulfilling the bounds \eqref{small}. Thus, we obtain by a Neumann series argument that for each $\rho_{k, 0} \in M_l$ there exists a $\delta(\rho_{k, 0}) > 0$, such that for all $\rho \in B_0^{W^{\bar{\gamma}}_2(I)}(\rho_{k, 0}, \delta(\rho_{k, 0}))$,
	\small
	\begin{align*}
	\hspace{-0,1 cm} B_0^{W^{\bar{\gamma}}_2(I)}(\rho_{k, 0}, \delta(\rho_{k, 0})) := \left\{ \rho \in W^{\bar{\gamma}}_2(I) : \partial_{\sigma} \rho (\sigma) = 0  \textrm{ for } \sigma = 0, 1 \textrm{ and } \|\rho_{k, 0} - \rho\|_{W^{\bar{\gamma}}_2(I)} < \delta(\rho_{k, 0}) \right\},
	\end{align*}
	\normalsize
	the operator $\mathcal{L}(\rho) \in L(\mathbb{E}_{\mu, T}; \mathbb{E}_{0, \mu} \times \tilde{\mathbb{F}}_{\mu} \times X_{\mu})$ is invertible with
	\begin{align*}
	\left\| \mathcal{L}^{-1}(\rho) \right\| \leq 2 \left\| \mathcal{L}^{-1}(\rho_{k, 0}) \right\|.
	\end{align*}
	Moreover, we can cover $\overline{M_l}^{W^{\bar{\gamma}}_2(I)}$ by the union of $B_0^{W^{\bar{\gamma}}_2(I)}(\rho_{k, 0}, \delta(\rho_{k, 0}))$ with $k \in M_l$.
	By compactness of $\overline{M_l}^{W^{\bar{\gamma}}_2(I)}$, there exists a finite set $S_l \subset \mathbb{N}$ such that 
	\begin{align*}
	\overline{M_l}^{W^{\bar{\gamma}}_2(I)} \subset \bigcup_{S_l \subset \mathbb{N}} B^{W^{\bar{\gamma}}_2(I)}(\rho_{k, 0}, \delta(\rho_{k, 0})). 
	\end{align*}
	Consequently, we deduce
	\begin{align*}
	\left\| \mathcal{L}^{-1}(q) \right\| \leq 2 \max_{k \in S_l} \left\| \mathcal{L}^{-1}(\rho_{k, 0}) \right\| = : C
	\end{align*}
	for all $\rho_{k, 0} \in M$, cf.\ \eqref{claim}. 
	Thus, it makes sense to set $\tilde{T} := \min_{l \in S} T_l$. As $t_k \rightarrow T_{max}$ for ${k \to \infty}$, we can choose a sufficiently large $k \in \mathbb{N}$ such that $t_k + \tilde{T} > T_{max}$.
	
	We fix a $k \in \mathbb{N}$ with this property. Let $\rho_{k,0} \in W^{4(\mu - \nicefrac{1}{2})}_2(I)$ be the height function over $\Phi^*_l \circ \beta_l$, $l \in S$, which corresponds to $\tilde{f}_k \circ \beta_k$. By the short time existence result, Theorem \ref{local}, we obtain for the initial datum $\rho_{k,0} \in W^{4(\mu - \nicefrac{1}{2})}_2(I)$ a solution 
	\begin{align*}
	\rho: [0, \tilde{T}) \times I \rightarrow (-d, d),  \;	(t, x) \mapsto \rho(t, x),
	\end{align*}
	such that $\rho \in W^1_{2} ([0, \tilde{T}); L_{2} (I) ) \cap L_{2, loc} ([0, \tilde{T}); W^{4}_{2} (I))$ and $\rho(0, \cdot) = \rho_{k,0}$. 
	This implies that for
	\begin{align*}
	\tilde{f}(t, \sigma) &:= \Phi^*_l \circ \beta_l(\sigma) + \rho(t, \sigma) (n_{\Lambda_l}(\sigma) + {\cot{\alpha}} \eta(\sigma) \tau_{\Lambda_l} (\sigma))
	\end{align*}
	the following holds true:
	\begin{enumerate}
		\item $\tilde{f} \in W^1_{2, \mu} \left([0, \tilde{T}); L_{2, \mu} (I; \mathbb{R}^2) \right) \cap L_{2} \left([0, \tilde{T}); W^{4}_{2} (I; \mathbb{R}^2) \right)$,
		\item $\tilde{f}$ fulfills \eqref{0}-\eqref{3blow} and there exists a regular $C^1$-reparametrization $\varphi_k: \bar{I} \rightarrow \bar{I}$ such that $\tilde{f}(0, \sigma) 
		= \tilde{f}_k \circ \beta_k (\varphi_k(\sigma))$ for all $\sigma \in \bar{I}$, cf.\ \eqref{dars},
		\item $\tilde{f}(t, \cdot)$ is for each $t \in [0, \tilde{T})$ a regular parametrization of the curve $\tilde{f}(t, \bar{I})$.
	\end{enumerate}
	
	\hspace{-0.35cm}\textit{\uline{Step 3:} Extension of the original solution\\}
	It remains to show that we can extend the original solution $f$ beyond $T_{max} < \infty$, which was assumed to be maximal. To this end, we want to translate $\tilde{f}$ by $f(t_k, 0)$, as $\tilde{f}(t_k, 0) + f(t_k, 0) = f(t_k, 0)$. By Lemma \ref{unab1}, we have
	\begin{align*}
	W^1_{2, \mu, loc} \left([0, T); L_{2} (I; \mathbb{R}^2) \right) &\cap L_{2, \mu, loc} \left([0, T); W^{4}_{2} (I; \mathbb{R}^2) \right) \\
	& \hookrightarrow  BUC \left([0, T - \epsilon], W_{2}^{4 (\mu - \nicefrac{1}{2})}(I; \mathbb{R}^2) \right) && \textrm{ for each } 0 < \epsilon < T.
	\end{align*}
	Consequently, it follows by Remark \ref{afterunab}
	\begin{align*}
	|f(t_l, 0) | \leq \| f(t_l, \cdot) \|_{C(\bar{I}; \mathbb{R}^2)} \leq C \| f(t_l, \cdot) \|_{W_{2}^{4 (\mu - \nicefrac{1}{2})}(I; \mathbb{R}^2)}.
	\end{align*}
	We notice the flow is invariant under translation, i.e.\ if $f: [0, T)  \times \bar{I} \rightarrow \mathbb{R}^2$ is strong solution with $f(0, \bar{I}) = f_0(\bar{I})$, then $f_h (t, \sigma) := f (t, \sigma) + (h, 0)^T$ is a strong solution with $f_h(0, \bar{I}) = f_0(\bar{I}) + (h, 0)^T$.
	Therefore, $\tilde{f} + f(t_k, 0)$ fulfills properties analogous to $\tilde{f}$, see the previous step of the proof. By concatenating the "old" part of the solution for $t \in [0, t_k]$ and the new one for $t \in [t_k, t_k + \tilde{T}) $ at $t = t_k$, we obtain 
	an extension of the original solution as $t_k + \tilde{T} > T_{max}$, which contradicts the maximality of the original solution. Thus, the assumption \eqref{kb} on the curvature cannot be true. As the sequence $(t_l)_{l \in \mathbb{N}}$ was arbitrary, the claim is proven.

\footnotesize
\section*{Acknowledgement} 
The results of this paper are part of the second author's PhD Thesis, which was supported by the DFG through the Research Training Group GRK 1692 "Curvature, Cycles, and Cohomology" in Regensburg. The support is gratefully acknowledged.

% \bibliographystyle{abbrv}
% \normalem
% \bibliography{refs2a1}

\begin{thebibliography}{10}
	
	\bibitem{butzprint1}
	H.~Abels and J.~Butz.
	\newblock The curve diffusion flow with a contact angle.
	\newblock {\em Preprint arXiv 1810.01502}, Oct. 2018 
	
	\bibitem{alt}
	H.~W. Alt.
	\newblock {\em Linear functional analysis}.
	\newblock Universitext. Springer-Verlag London, Ltd., London, 2016.
	\newblock An application-oriented introduction, Translated from the German
	edition by Robert N\"urnberg.
	
	\bibitem{escher}
	H.~Amann and J.~Escher.
	\newblock {\em Analysis. {III}}.
	\newblock Birkh\"auser Verlag, Basel, 2009.
	\newblock Translated from the 2001 German original by Silvio Levy and Matthew
	Cargo.
	
	\bibitem{danchin}
	H.~Bahouri, J.-Y. Chemin, and R.~Danchin.
	\newblock {\em Fourier Analysis and Nonlinear Partial Differential Equations}.
	\newblock Grundlehren der mathematischen Wissenschaften 343. Springer-Verlag
	Berlin Heidelberg, 1 edition, 2011.
	
	\bibitem{butzdiss}
	J.~Butz.
	\newblock {\em The Curve Diffusion Flow with a Contact Angle}.
	\newblock PhD Thesis, 2018.
	\newblock https://epub.uni-regensburg.de/37705/.
	
	\bibitem{cahnelliottnovick}
	J.~W. Cahn, C.~M. Elliott, and A.~Novick-Cohen.
	\newblock The {C}ahn-{H}illiard equation with a concentration dependent
	mobility: motion by minus the {L}aplacian of the mean curvature.
	\newblock {\em European J. Appl. Math.}, 7(3):287--301, 1996.
	
	\bibitem{chou}
	K.-S. Chou.
	\newblock A blow-up criterion for the curve shortening flow by surface
	diffusion.
	\newblock {\em Hokkaido Math. J.}, 32(1):1--19, 2003.
	
	\bibitem{acqualinpozzi}
	A.~Dall'Acqua, C.-C. Lin, and P.~Pozzi.
	\newblock Evolution of open elastic curves in {$\Bbb{R}^n$} subject to fixed
	length and natural boundary conditions.
	\newblock {\em Analysis (Berlin)}, 34(2):209--222, 2014.
	
	\bibitem{acquapozzi}
	A.~Dall'Acqua and P.~Pozzi.
	\newblock A {W}illmore-{H}elfrich {$L^2$}-flow of curves with natural boundary
	conditions.
	\newblock {\em Comm. Anal. Geom.}, 22(4):617--669, 2014.
	
	\bibitem{acquapozzispener}
	A.~Dall'Acqua, P.~Pozzi, and A.~Spener.
	\newblock The {L}ojasiewicz-{S}imon gradient inequality for open elastic
	curves.
	\newblock {\em J. Differential Equations}, 261(3):2168--2209, 2016.
	
	\bibitem{dziuk}
	G.~Dziuk, E.~Kuwert, and R.~Sch\"atzle.
	\newblock Evolution of elastic curves in {$\Bbb R^n$}: existence and
	computation.
	\newblock {\em SIAM J. Math. Anal.}, 33(5):1228--1245, 2002.
	
	\bibitem{eschmaysim}
	J.~Escher, U.~F. Mayer, and G.~Simonett.
	\newblock The surface diffusion flow for immersed hypersurfaces.
	\newblock {\em SIAM J. Math. Anal.}, 29(6):1419--1433, 1998.
	
	\bibitem{garckenovick}
	H.~Garcke and A.~Novick-Cohen.
	\newblock A singular limit for a system of degenerate {C}ahn-{H}illiard
	equations.
	\newblock {\em Adv. Differential Equations}, 5(4-6):401--434, 2000.
	
	\bibitem{lin}
	C.-C. Lin.
	\newblock {$L^2$}-flow of elastic curves with clamped boundary conditions.
	\newblock {\em J. Differential Equations}, 252(12):6414--6428, 2012.
	
	\bibitem{lunardi}
	A.~Lunardi.
	\newblock {\em Analytic semigroups and optimal regularity in parabolic
		problems}.
	\newblock Modern Birkh\"auser Classics. Birkh\"auser/Springer Basel AG, Basel,
	1995.
	
	\bibitem{meydiss}
	M.~Meyries.
	\newblock {\em Maximal regularity in weighted spaces, nonlinear boundary
		conditions, and global attractors}.
	\newblock PhD Thesis, 2010.
	\newblock https://publikationen.bibliothek.kit.edu/1000021198.
	
	\bibitem{meyries_inter}
	M.~Meyries and R.~Schnaubelt.
	\newblock Interpolation, embeddings and traces of anisotropic fractional
	sobolev spaces with temporal weights.
	\newblock {\em J. Funct. Anal.}, 262(3):1200--1229, 2012.
	
	\bibitem{meyries}
	M.~Meyries and R.~Schnaubelt.
	\newblock Maximal regularity with temporal weights for parabolic problems with
	inhomogeneous boundary conditions.
	\newblock {\em Math. Nachr.}, 285(8-9):1032--1051, 2012.
	
	\bibitem{mullins}
	W.~W. Mullins.
	\newblock Theory of thermal grooving.
	\newblock {\em Journal of Applied Physics}, 28(3):333--339, 1957.
	
	\bibitem{pruess}
	J.~Pr\"uss and G.~Simonett.
	\newblock {\em Moving interfaces and quasilinear parabolic evolution
		equations}, volume 105 of {\em Monographs in Mathematics}.
	\newblock Birkh\"auser, Basel, 2016.
	
	\bibitem{renardy}
	M.~Renardy and R.~C. Rogers.
	\newblock {\em An introduction to partial differential equations}, volume~13 of
	{\em Texts in Applied Mathematics}.
	\newblock Springer-Verlag, New York, second edition, 2004.
	
	\bibitem{triebel}
	H.~Triebel.
	\newblock {\em Interpolation theory, function spaces, differential operators}.
	\newblock Johann Ambrosius Barth, Heidelberg, second edition, 1995.
	
	\bibitem{wheelerwheeler}
	G.~{Wheeler} and V.-M. {Wheeler}.
	\newblock {Curve diffusion and straightening flows on parallel lines}.
	\newblock {\em Preprint arXiv 1703.10711}, Mar. 2017.
	
	\bibitem{zeidler}
	E.~Zeidler.
	\newblock {\em Nonlinear functional analysis and its applications. {I}}.
	\newblock Springer-Verlag, New York, 1986.
	\newblock Fixed-point theorems, Translated from the German by Peter R. Wadsack.
	
\end{thebibliography}

\end{document}